\numberwithin{equation}{section}
\newtheorem{thm}{Theorem}[section]
\newtheorem{def1}[thm]{Definition}
\newtheorem{lemma}[thm]{Lemma}
\newtheorem{cor}[thm]{Corollary}
\newtheorem{prop}[thm]{Proposition}
\newtheorem{remark}[thm]{Remark}
\newtheorem{assumption}[thm]{Assumption}
\DeclareMathOperator{\dist}{dist}
\DeclareMathOperator{\Per}{Per}
\DeclareMathOperator{\supp}{supp}
\DeclareMathOperator{\divv}{div}
\DeclareMathOperator{\tr}{tr}
\newcommand{\R}{\mathbb{R}}
\newcommand{\N}{\mathbb{N}}
\newcommand{\m}{\mathfrak{m}}
\newcommand{\di}{\mathsf{d}}
\DeclareMathOperator{\Test}{Test}
\DeclareMathOperator{\Hess}{Hess}
\DeclareMathOperator{\sink}{sn}
 \newcommand{\nchi}{{\raise.3ex\hbox{$\chi$}}}
 \newcommand{\eps}{\varepsilon}
 \newcommand{\bd}{\bold\Delta}
 \newcommand{\sfd}{{\sf d}}
 \newcommand{\X}{{\rm X}}
 \newcommand{\mm}{\m}
 \newcommand{\Fl}{{\sf Fl}}
 \newcommand{\RCD}{{\sf RCD}}
 \newcommand{\pr}{{\mathscr P}}
 \newcommand{\e}{{\rm e}}
 \renewcommand{\b}{{\rm b}}
 \renewcommand{\Pr}{{\rm Pr}}
 \renewcommand{\d}{{\rm d}}
 \newcommand{\restr}[1]{\lower3pt\hbox{$|_{#1}$}}
 \newcommand{\Lip}{{\rm Lip}}
 \newcommand{\Id}{{\rm Id}}
 \newcommand{\la}{\langle}
 \newcommand{\ra}{\rangle}
 \newcommand{\ms}{{\sf ms}}
 \renewcommand{\div}{{\rm div}}
 \newcommand{\sfT}{{\sf T}}
 \newcommand{\para}{\parallel}
 \newcommand{\fr}{\hfill$\blacksquare$}   
\newcommand{\ppi}{{\mbox{\boldmath$\pi$}}}
\title{A general splitting principle on $\RCD$ spaces and applications to spaces with positive spectrum} 
\begin{document}

\author{Nicola Gigli\ \thanks{SISSA, ngigli@sissa.it}  \and
	Fabio Marconi
	\thanks{SISSA, fmarconi@sissa.it}}

\maketitle

\begin{abstract}
In this paper we develop a general `analytic' splitting principle for $\RCD$ spaces: we show that if there is a function with suitable Laplacian and Hessian, then the space is (isomorphic to) a warped product.

Our result covers most of the splitting-like results currently available in the literature about $\RCD$ spaces. We then apply it to extend to the non-smooth category some structural property of Riemannian manifolds obtained by Li and Wang.
\end{abstract}
\tableofcontents

\section{Introduction}
A central theme in geometric analysis is the understanding of how curvature affects the shape of the space under consideration. Often, studies in this direction come in the form of suitable geometric/analytic inequalities being valid in spaces satisfying appropriate curvature bounds. When this occurs, it becomes interesting to study the equality case, which typically comes with strong rigidity of the underlying geometry: it is not infrequent - and in fact very common when dealing with lower Ricci bounds - that in such rigid situations a (warped) product structure emerges. This is the case, for instance, in the two prototypical examples of rigidity in connection with lower Ricci bounds: the Cheeger-Gromoll splitting theorem and the volume-cone-to-metric-cone principle (see \cite{Cheegersurvey} and references therein). Almost by definition, such product structures emerge when one can find a smooth function $\b:M\to\R$ satisfying
\begin{equation}
\label{eq:introb}
\begin{split}
|\d \b|&\equiv 1,\\
\Delta\b&=\psi_\mm\circ\b,\\
\Hess\b&=\psi_\sfd\circ\b(\Id-\e_1\otimes\e_1)\qquad\text{where }\e_1:=\tfrac{\nabla\b}{|\nabla\b|},
\end{split}
\end{equation}
for suitable functions $\psi_\mm,\psi_\sfd:M\to\R$. In this  case it is easy to see that $M\sim \R\times_w N$ where:
\begin{itemize}
\item[-] The smooth manifold $N$ is given by $N:=\b^{-1}(\{0\})$,
\item[-] The metric-measure isomorphism sends $x\in M$ to $(\b(x),\Pr(x))\in\R\times\N$, where $\Pr(x)$ is the point along the gradient flow trajectory $\gamma'_t=\nabla\b(\gamma_t)$, $\gamma_0=x$ such that $\Pr(x)\in N$.
\item[-] The metric tensor on $\R\times_wN$ is given by $\d t^2+w_\sfd(t)(\d x')^2$, where $(\d x')^2$ is the metric on $N$ and
\[
w_\sfd(t):=\exp\Big(\int_0^t\psi_\sfd(s)\,\d s\Big).
\]
\item[-] The measure on $\R\times_wN$ is given by $\d \mathcal L^1(t)\otimes w_\mm(t) \d{\rm vol}_N(x')$, where ${\rm vol}_N$ is the volume measure on $N$ and
\[
w_\mm (t):=\exp\Big(\int_0^t\psi_\mm(s)\,\d s\Big).
\]
\end{itemize}
For instance, the splitting theorem corresponds to the above with $\psi_\mm=\psi_\sfd=0$ and the volume-cone-to-metric-cone theorem corresponds to $\psi_\sfd=\frac{1}{t}$ and $\psi_\mm=\frac{{\rm dim}(M)-1}{t}$.

\medskip

Starting from these considerations, we develop a general splitting principle in the framework of $\RCD(K,N)$ spaces, that are non-smooth metric measure structures having, in a suitable weak sense, Ricci curvature bounded from below by $K$ and dimension bounded from above by $N$. These structures have been introduced in \cite{Gigli12}, after the studies in \cite{Sturm06I}, \cite{Sturm06II}, \cite{Lott-Villani09}, \cite{AmbrosioGigliSavare11-2}. We refer to the surveys \cite{Villani2017},\cite{AmbrosioICM}, \cite{DGG} for more about the subject and detailed bibliography.

The first result we prove, see Theorem \ref{thm:genspl}, can be used to recover the rigidity obtained in \cite{Gigli13}, \cite{DPG16}, \cite{CDPSW21}, meaning that in obtaining the rigidity results of these papers, once   one has the function $\b$ as in \eqref{eq:introb}, then the corresponding product structure follows. In saying this we remark that in general a key  difficulty in proving that a product structure exists is in finding a function $\b$ as in   \eqref{eq:introb}. In this sense, our  splitting principle can be seen as a general tool that allows to translate the analytic information encoded in  \eqref{eq:introb} into a geometric information. Speaking of technical tools, many, but not all, of those that we use here come from \cite{DPG16}. A main difference, also with respect to the more recent  \cite{CDPSW21}, is that in studying how the gradient flow of $\b$ acts on vector fields, we have to (more) carefully distinguish between the component parallel to $\nabla \b$ and the one orthogonal to it: in the previous studies that we mentioned the specific geometry of the problem provided some simplification that is not present here, see in particular Section \ref{se:flowdist}.

In principle, one would like to obtain sharp informations about the $\RCD$ property satisfied by the quotient space $N$ in the above. This, however, seems tricky to do in our generality and our results in this direction are sub-optimal: we prove, under quite general assumptions on the warping functions, that if the original space is $\RCD(K,N)$, then so is the quotient one. In particular, and in line with \cite{CDPSW21}, we do not see an improving in the upper dimension bound. It seems that to obtain this a more careful analysis of the Bochner inequality - akin to that in \cite{Ketterer13} - on warped spaces is needed, but this is outside the scope of this paper.

\medskip

In the second part of the paper we generalize a result proved by Li and Wang in the smooth category in \cite{LW01} 
to the framework of $\RCD$ spaces. The statement   provides geometric rigidity under assumptions that combine informations on the first eigenvalue of the Laplacian and the behaviour of the space at infinity:
\begin{thm}\label{mainthm}
Let \((\X,\di,\m)\) be an \(\RCD(-(N-1),N)\) space with \(N\geq 3\) and assume that the first eigenvalue of the Laplacian \(\lambda_1\) is \(\geq N-2\). Then one of the following holds:
\begin{itemize}
	\item[i)] \(\X\) has only one end with infinite volume;
	\item[ii)] \(\X\) is isomorphic as metric measure space to a warped product space \(\R\times_w\X'\), where $\X'$ is a compact \(\RCD(-(N-1),N)\) space and the warping functions are
	\[
	w_\sfd(t):=\cosh(t),\qquad\text{ and }\qquad w_\mm(t):=\cosh^{N-1}(t).
	\] 
	Moreover, in this case \(\lambda_1=N-2\).
\end{itemize}
\end{thm}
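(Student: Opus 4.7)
The plan is to argue by contrapositive: supposing that case (i) fails---so that $\X$ has either at least two ends, or a unique end of finite volume---I aim to construct a function $\b\colon\X\to\R$ satisfying \eqref{eq:introb} with $\psi_\sfd(t)=\tanh(t)$ and $\psi_\mm(t)=(N-1)\tanh(t)$; invoking Theorem \ref{thm:genspl} will then identify $\X$ with the warped product of case (ii).

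To identify the correct ansatz, a direct computation on the model $\R\times_w\X'$ with the claimed warpings shows that $f(t,x'):=\cosh(t)^{-(N-2)}$ is a positive solution of $\Delta f=-(N-2)f$. The first step is therefore to construct, on $\X$, such a positive minimal eigenfunction, with distinct asymptotic behaviours on two infinite-volume ends, or decaying into the unique finite-volume one. In the smooth case this is achieved by Li and Wang via an Agmon--Sullivan exhaustion argument combined with a Li--Tam-type decomposition by ends, exploiting the spectral hypothesis $\lambda_1\geq N-2$. Adapting this to the $\RCD$ setting is the step I expect to be the main obstacle, as it requires a suitable portion of linear elliptic potential theory---existence of positive minimal eigenfunctions on the ends, a Harnack-type maximum principle, and a workable notion of end---to be available on non-smooth structures.

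Granted such an $f$, I would set $v:=\log f$, so that $\Delta v=-(N-2)-|\nabla v|^2$, and apply the sharp $\RCD(-(N-1),N)$ Bochner inequality
\[
\tfrac{1}{2}\Delta|\nabla v|^2\geq\tfrac{(\Delta v)^2}{N}+\la\nabla v,\nabla\Delta v\ra-(N-1)|\nabla v|^2,
\]
substituting the ODE for $\Delta v$ and integrating against a suitable cut-off. Combined with the growth/decay of $f$ coming from the previous step, this should force global equality in Bochner, hence the pointwise identity that $\Hess v$ decomposes as $a(v)(\Id-\e_1\otimes\e_1)+b(v)\,\e_1\otimes\e_1$, with $\e_1:=\nabla v/|\nabla v|$ and scalar functions $a,b$ matching the model. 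Reparametrizing $v$ by arc-length along its gradient flow then produces $\b$ with $|\d\b|\equiv 1$, $\Delta\b=(N-1)\tanh\circ\b$ and the prescribed Hessian, at which point Theorem \ref{thm:genspl} yields the isomorphism $\X\cong\R\times_w\X'$ together with the $\RCD(-(N-1),N)$-regularity of $\X'$.

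Finally, compactness of $\X'$ follows because the blowup of $w_\mm=\cosh^{N-1}$ at infinity, together with the negation of case (i), prevents the level sets of $\b$ from being non-compact; and plugging $f=\cosh^{-(N-2)}\circ\b$ back in as a test function, combined with the standing hypothesis $\lambda_1\geq N-2$, yields the equality $\lambda_1=N-2$.
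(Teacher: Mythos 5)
Your first step---directly producing a positive eigenfunction $f$ of $\Delta f=-(N-2)f$ via an Agmon--Sullivan/Li--Tam type exhaustion---is not the route taken by the paper, and you flag it yourself as the main obstacle; as it stands this is a genuine gap rather than a different-but-complete argument. The paper instead builds a bounded non-constant \emph{harmonic} function $u$ on $\X$, via harmonic barriers on ends and limiting arguments (Theorem \ref{thm:exharm}, Lemma \ref{lem1}, Theorem \ref{thm14}, Corollary \ref{thm2ends}); the eigenfunction $f=|\d u|^{\frac{N-2}{N-1}}$ then emerges, together with $\lambda_1=N-2$ and the Hessian structure of $u$, as the rigidity content of the improved Bochner inequality applied to the \emph{harmonic} $u$ (Proposition \ref{hessianbusemann}), rather than being constructed in advance. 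So the potential theory that is actually developed is for bounded harmonic functions on ends, not for eigenfunctions, and the decay estimates of Lemma \ref{lem1} (which exploit $\lambda_1>0$) are needed to justify the cut-off and integration step in that Bochner argument. Note also that your ``unique end of finite volume'' alternative never occurs: Proposition \ref{prop:1end} shows $\lambda_1>0$ already forces at least one end of infinite volume, so the negation of case (i) is simply ``at least two ends of infinite volume''.

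There is a second substantial gap in the passage from the Hessian decomposition to the coordinate function $\b$. The rigid identity for $\Hess v$ gives $\d|\d v|=h\,\d v$ for some function $h$, but on a non-smooth space this does \emph{not} by itself yield $|\d v|=\psi\circ v$ for a one-variable $\psi$, and precisely that pointwise functional relation is the input needed for Lemma \ref{le:coord} (your ``reparametrizing $v$ by arc-length along the gradient flow'') to make sense as a global chain-rule operation. Establishing $|\d u|=\varphi\circ u$ is one of the more delicate points of the paper, occupying all of Section \ref{chcompfun} (Propositions \ref{prop:comploc} and \ref{prop:odephi}, using Regular Lagrangian Flow regularity and optimal transport arguments); your proposal does not address it. Finally, the suggestion to extract $\lambda_1=N-2$ by inserting $\cosh^{-(N-2)}\circ\b$ into the Rayleigh quotient does not work for $N=3$, since that function fails to be in $L^2(\mm_w)$ (the $t$-integrand in the $L^2$ norm is $\cosh^{3-N}(t)\equiv 1$); the paper obtains $\lambda_1=N-2$ as part of the integrated Bochner rigidity argument in Proposition \ref{hessianbusemann}, not via a test-function insertion.
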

See Section \ref{chvolends} for the definitions of `first eigenvalue of the Laplacian' and `end', while we refer to Section \ref{se:setup} for the definition of warped product  \(\R\times_w\X'\).

Thanks to the general splitting Theorem \ref{thm:genspl} proved before, in order to obtain this result it suffices to produce  a function $\b$ as in \eqref{eq:introb}: following the studies in  \cite{LW01}, but with non-trivial additional technical complications, see for instance Section \ref{chcompfun}, 
this will be done by studying the behaviour of suitable harmonic functions in the given spaces. 

\section{Preliminaries}\label{chprel}

We shall assume the reader familiar with the concepts we are going to use in this manuscript, such as Sobolev functions on metric measure spaces, $L^0$-normed modules, $\RCD$ condition, Regular Lagrangian Flows and so on. In this short preliminary section we recall some of the basic definitions, mainly just to fix the notation. Further precise references will be given throughout the text, whenever needed.

\subsection{First order Sobolev calculus}

Here we briefly recall concepts related to first order Sobolev calculus on metric measure spaces. The notion of Sobolev function we are going to use comes from \cite{Cheeger00}, see also \cite{Shanmugalingam00} and the more recent  \cite{AmbrosioGigliSavare11} where it is proposed the approach we are going to use. The concept of infinitesimal Hilbertianity come from \cite{Gigli12} and those of $L^0$-module and differential from \cite{Gigli14} (but see also \cite{GR17} for a presentation closer in spirit to the one given below).

\medskip

In this paper, a \emph{metric measure space} is a triple $(\X,\sfd,\mm)$ with $(\X,\sfd)$ complete metric space (in most cases it will be \emph{proper}, i.e.\ with bounded closed sets that are compact) and $\mm$ a reference Borel measure non-negative, non-zero and giving finite mass to bounded sets.

By $\Lip_{loc}(\X)$, $\Lip_{bs}(\X)$ we denote the space of the locally Lipschitz functions and Lipschitz functions with bounded support, respectively.

For $U\subset\X$ open and $p\geq 1$, the space $L^p_{loc}(U)$ is the collection of (equivalence classes of) functions $f:U\to\R$ such that any $x\in U$ has a neighbourhood on which $|f|^p$ is integrable (with the obvious modification for $p=\infty$).

$\pr(\X)$ is the collection of Borel probability measures on $\X$ and  $C([0,1],\X))$ the space of continuous curves on $\X$ equipped with the `sup' distance. This is complete and separable. For $t\in[0,1]$ the evaluation map $\e_t:C([0,1],\X)\to\X$ sends $\gamma$ to $\gamma_t$.

\begin{def1}[Test plan]
Let \((\X,\di,\m)\) be a metric measure space. A probability measure \(\ppi\in \mathscr{P}(C([0,1],\X))\) is said to be a test plan on \(\X\) provided the following two properties are satisfied:
\begin{itemize}
	\item there exists a constant \(C>0\) such that \((\e_t)_*\ppi\leq C\m\) for every \(t\in[0,1]\);
	\item it holds that \(\int_0^1\int|\dot{\gamma}_t|^2\,\d\ppi(\gamma)\,\d t<+\infty\).
\end{itemize}
(for the concept of \emph{metric speed} $|\dot\gamma_t|$ see \cite{Ambr90}). For $U\subset\X$ open we say that $\ppi$ is a test plan on $U$ if it is concentrated on curves taking values in $U$.
\end{def1}
Sobolev functions can be introduced by duality with test plans:
\begin{def1}[Sobolev class]
Let \((\X,\di,\m)\) be a metric measure space and $U\subset\X$ open. We say that $f:U\to\R$ Borel belongs to the local Sobolev class $S^2_{loc}(U)$ provided there is $G\in L^2_{loc}(U)$, $G\geq 0$ such that 
\[
\int|f(\gamma_1)-f(\gamma_0)|\,\d\ppi(\gamma)\leq \iint_0^1\int G(\gamma_t)|\dot{\gamma}_t|\,\d t\,\d\ppi(\gamma)
\]
holds for any test plan $\ppi$ on $U$. Any such $G$ is called \emph{weak upper gradient} of $f$.
\end{def1}
It can be proved that there is a minimal, in the $\mm$-a.e.\ sense, weak upper gradient of any $f\in S^2_{loc}(\X)$: it will be denoted $|\d f|$. We then define:
\begin{def1}[Sobolev space]
Let \((\X,\di,\m)\) be a metric measure space and $U\subset\X$ open. The space $W^{1,2}_{loc}(U)$ is defined as $L^2_{loc}(U)\cap S^2_{loc}(U)$ and $W^{1,2}(U)\subset W^{1,2}_{loc}(U)$ is the subset of those $f$ with $|f|,|\d f|\in L^2(U)$. Also, $W^{1,2}_0(U)\subset W^{1,2(U)}$ is the $W^{1,2}$-closure of the space of functions with support bounded and contained in $U$.
\end{def1}
The space $W^{1,2}(U)$ is a Banach space when equipped with the norm
\[
\|f\|_{W^{1,2}(U)}^2:=\|f\|_{L^2(U)}^2:+\||\d f|\|_{L^2(U)}^2.
\]
In general it is not Hilbert and this motivates:
\begin{def1}
We say that $(\X,\sfd,\mm)$ is infinitesimally Hilbertian if $W^{1,2}(\X)$ is an Hilbert space.
\end{def1}
To associate a differential to Sobolev functions it is convenient to introduce the following:

\begin{def1}[\(L^0\)-normed \(L^0\)-module]
An \(L^0(\m)\)-normed \(L^0(\m)\)-module is a quadruple \((\mathscr{M},\tau,\cdot,|\cdot|)\) with the following properties:
\begin{itemize}
	\item \((\mathscr{M},\tau)\) is a topological vector space;
	\item $\cdot$ is the multiplication by \(L^0\) functions, i.e.\ a map \(\cdot:L^0(\m)\times \mathscr{M}\rightarrow\mathscr{M}\) that is bilinear and
	\[f\cdot(g\cdot v)=(fg)\cdot v,\text{ and } \hat{1}\cdot v=v\]
	for every \(f,g\in L^0(\m)\) and \(v\in \mathscr{M}\), where \(\hat{1}\) is the function identically equal to 1;
	\item $|\cdot|$ is the pointwise norm, i.e.\ a map \(|\cdot|:\mathscr{M}\rightarrow L^0(\m)\) satisfying
	\begin{align*}
	|v|\geq 0 \hspace{0.3cm}&\m-\text{a.e. for every }v\in\mathscr{M},\\
	|f\cdot v|=|f||v| \hspace{0.3cm}&\m-\text{a.e. for every } f\in L^0(\m)\text{ and }v\in\mathscr{M};\\
	| v+w|\leq |v|+|w| \hspace{0.3cm}&\m-\text{a.e. for every } v,w\in\mathscr{M};
	\end{align*}
	\item the distance
\[\di_\mathscr{M}(v,w):=\int|v-w|\land f\,\d\mm, \]
where $f$ is a fixed function in $L^1(\mm)$ that is strictly positive $\mm$-a.e., is complete and induces the topology \(\tau\) (the choice of the particular $f$ is irrelevant for this purpose).
\end{itemize}
\end{def1}
Then the space $L^0(T^*\X)$ of 1-forms can be introduced via:
\begin{thm}
Let $(\X,\sfd,\mm)$ be a metric measure space. Then there is a unique couple $(L^0(T^*\X),\d)$ with $L^0(T^*\X)$ $L^0$-module in the above sense and $\d:S^2_{loc}(\X)\to L^0(T^*\X)$ such that
\begin{itemize}
\item for any $f\in S^2_{loc}(\X)$ the pointwise norm of $\d f$ is $\mm$-a.e.\ equal to the minimal weak upper gradient $|\d f|$,
\item $L^0$-linear combinations of $\d f$'s for $f\in S^2_{loc}(\X)$ are dense in $L^0(T^*\X)$.
\end{itemize}
The couple $(L^0(T^*\X),\sfd)$ is unique up to unique isomorphism.
\end{thm}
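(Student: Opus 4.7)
The result is a standard functorial construction of the cotangent module; I would prove existence by an explicit construction and uniqueness by a transport-of-structure argument.

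\textbf{Uniqueness first.} Suppose $(\mathscr{M}_1,\d_1)$ and $(\mathscr{M}_2,\d_2)$ both satisfy the two bullet points. I would define a candidate isomorphism $\Phi:\mathscr{M}_1\to\mathscr{M}_2$ on the set of finite $L^0$-combinations $v=\sum_i g_i\,\d_1 f_i$ by setting $\Phi(v):=\sum_i g_i\,\d_2 f_i$. The first task is to check well-posedness: if $\sum_i g_i\,\d_1 f_i=0$ in $\mathscr{M}_1$, then $\sum_i g_i\,\d_2 f_i=0$ in $\mathscr{M}_2$. Since each $\mathscr{M}_j$ is an $L^0$-normed module, this reduces to showing that the pointwise norms $|\sum_i g_i\,\d_1 f_i|$ and $|\sum_i g_i\,\d_2 f_i|$ coincide $\m$-a.e., and by splitting on the partition induced by the supports of the $g_i$'s via the module's locality this boils down to the agreement on objects of the form $\chi_A\,\d_j f$; here property (i) gives $|\chi_A\,\d_j f|=\chi_A|\d f|$, the same quantity for $j=1,2$. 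Hence $\Phi$ is a well-defined isometry (for the pointwise norm) on a $\tau$-dense set by property (ii), so it extends uniquely to an $L^0$-module isomorphism between $\mathscr{M}_1$ and $\mathscr{M}_2$ intertwining $\d_1$ and $\d_2$.

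\textbf{Existence.} I would build the module explicitly from a \emph{pre-cotangent structure}. Consider the set $V$ of pairs $(\{A_i\},\{f_i\})_{i=1}^n$ where $\{A_i\}$ is a finite Borel partition of $\X$ and $f_i\in S^2_{loc}(\X)$; think of such an object as a formal expression $\sum_i\chi_{A_i}\,\d f_i$. Equip $V$ with the natural pointwise norm
\[
\Big|\sum_i\chi_{A_i}\,\d f_i\Big|:=\sum_i\chi_{A_i}|\d f_i|\in L^0(\m),
\]
with $|\d f_i|$ the minimal weak upper gradient. The crucial verification, based on the \emph{locality} of the minimal weak upper gradient (namely $|\d f|=|\d g|$ $\m$-a.e.\ on $\{f=g\}$ up to subtracting constants on connected pieces, together with $|\d(af+bg)|\leq |a||\d f|+|b||\d g|$ locally), is that this pointwise norm is well-defined after refining partitions and descends to a seminorm with the standard subadditivity and $L^0$-homogeneity. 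Quotienting $V$ by the equivalence $v\sim w\iff |v-w|=0$ $\m$-a.e.\ yields a pre-module with an isometric pointwise norm. Defining multiplication by simple functions in the obvious way, then extending to $L^0(\m)$ by taking the completion with respect to the $\di_{\mathscr{M}}$-distance (after verifying that simple functions are dense in $L^0(\m)$ and that multiplication extends continuously) gives the desired $L^0(T^*\X)$. The map $\d:S^2_{loc}(\X)\to L^0(T^*\X)$ sends $f$ to the class of $(\X,f)$; property (i) then holds by construction and property (ii) is immediate from density of the pre-module in its completion.

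\textbf{Main obstacle.} The technical heart is the well-posedness of the pointwise norm on $V$: one must prove that distinct representations of the same formal expression give equal pointwise norms $\m$-a.e., and that the triangle inequality holds for the seminorm. Both hinge on the locality properties of minimal weak upper gradients, which must be checked carefully on arbitrary Borel sets (not just open ones). Once these ``calculus rules'' for $|\d\cdot|$ are in place, the rest of the construction is a routine quotient-and-complete procedure, and the extension from $S^2$ to $S^2_{loc}$ is handled by a cutoff/gluing argument using again locality.
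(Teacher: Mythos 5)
The paper does not actually prove this theorem: it is a foundational result imported from \cite{Gigli14} and \cite{GR17}, so the relevant comparison is with the standard construction there. Your plan (a ``pre-cotangent module'' of formal expressions indexed by Borel partitions, quotient by the pointwise-norm-zero relation, complete, then extend scalars to $L^0$) is exactly that construction, and your identification of the technical heart --- locality and calculus rules for the minimal weak upper gradient on arbitrary Borel sets --- is correct. Two small points in the existence part deserve more care: (a) your set $V$ needs a group structure before ``$|v-w|=0$'' makes sense, so you must first declare $v+w$ and $v-w$ via the common refinement $\{A_i\cap B_j\}$ with entries $f_i\pm g_j$; (b) the parenthetical ``up to subtracting constants on connected pieces'' in your locality statement is off --- the statement you actually need is the clean one, that $|\d(f-g)|=0$ $\mm$-a.e.\ on a Borel set $A$ forces $|\d f|=|\d g|$ $\mm$-a.e.\ on $A$, which holds with no caveats.

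The genuine gap is in the uniqueness argument. You claim that showing $\bigl|\sum_i g_i\,\d_1 f_i\bigr|=\bigl|\sum_i g_i\,\d_2 f_i\bigr|$ ``boils down, by splitting on the partition induced by the supports of the $g_i$'s, to agreement on objects of the form $\chi_A\,\d_j f$.'' This does not work: the supports of the $g_i$'s overlap, so there is no such partition, and even on a set where several $g_i$'s are simultaneously nonzero the two bullet points give you no way to compute $\bigl|\sum_i g_i\,\d_j f_i\bigr|$ directly. The correct route is a two-step reduction. First show that elements of the special form $\sum_i\chi_{A_i}\,\d_j f_i$ with $\{A_i\}$ a Borel \emph{partition} are dense: for a simple $g=\sum_k c_k\chi_{C_k}$ one has $g\,\d_j f=\sum_k\chi_{C_k}\,\d_j(c_k f)$ (using $c\,\d_j f=\d_j(cf)$), and general $L^0$ coefficients are handled by approximating with simple ones and using continuity of multiplication. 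Second, for partition-coefficient elements the pointwise norm \emph{is} forced: on $A_i$ only the $i$-th term is active, so the gluing axiom for $L^0$-modules gives $\bigl|\sum_i\chi_{A_i}\,\d_j f_i\bigr|=\sum_i\chi_{A_i}|\d f_i|$ for both $j=1,2$. Only then do you get a well-defined pointwise isometry on a dense set, extendable to the sought isomorphism. Without this density-of-partition-elements step, the uniqueness proof does not go through.
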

We shall refer to  $\d f$ as the \emph{differential} of the function $f$. It satisfies the following natural properties:
\[
\begin{split}
\d f&=\d g,\qquad\mm\text{-a.e.\ on $\{f=g\}$},\\
\d f&=0,\qquad\mm\text{-a.e.\ on $f^{-1}(N)$  for $N\subset\R$ Borel negligible}\\
\d(\varphi\circ f)&=\varphi'\circ f\,\d f\qquad\mm\text{-a.e.\ for $\varphi:\R\to\R$ Lipschitz},\\
\d(fg)&=f\d g+g\d f,\qquad\qquad\mm\text{-a.e.\ for $f,g\in L^\infty_{loc}\cap S^2_{loc}(\X)$}.
\end{split}
\]

It is possible to prove that $\X$ is infinitesimally Hilbertian iff the pointwise parallogram identity
\[
2(|v|^2+|w|^2)=|v-w|^2+|v+w|^2\qquad\mm-a.e.,
\]
holds for any $v,w\in L^0(T^*\X)$. In this case by polarization we can define a \emph{pointwise scalar product} as
\[
\la v,w\ra:=\tfrac12(|v+w|^2-|v|^2-|w|^2)\qquad\mm-a.e.,
\]
and this is $L^0$-bilinear, continuous and satisfies the natural Cauchy-Schwarz inequality. In this case $L^0(T^*\X)$ can be canonically identified with its dual $L^0(T\X)$ via the Riesz isomorphism (see \cite{Gigli14}): the element corresponding to $\d f$ via such isomorphism is denoted $\nabla f$ and called \emph{gradient} of $f$. 

Elements of $L^0(T\X)$ are called vector fields. Taking the adjoint of $\d$ we can define the divergence operator:
\begin{def1}[Divergence]
Let $(\X,\sfd,\mm)$ be infinitesimally Hilbertian (but this is not really necessary for this definition), $U\subset\X$ open and $v\in L^0(T\X)$. We say that $v\in D(\div_{loc},U)$ provided there is $f\in L^2_{loc}(U)$ such that
\[
\int \d g(v)\,\d\mm=-\int fg\,\d\mm\qquad \forall g\in W^{1,2}_0(U).
\] 
In this case the function $f$, that is easily seen to be unique, is denoted $\div (v)$. In the case $U=\X$ we simply write $v\in D(\div_{loc})$
\end{def1}
Taking the divergence of the gradient we get the Laplacian:
\begin{def1}[Laplacian]\label{def:lapl}
Let $(\X,\sfd,\mm)$ be infinitesimally Hilbertian (here this matters), $U\subset \X$ open and $f\in W^{1,2}_{loc}(\X)$. We say that $f\in D(\Delta_{loc},U)$ provided $\nabla f\in D(\div_{loc},U)$, i.e.\ if there is $h\in L^2_{loc}(\X)$ such that
\[
\int \la \nabla f,\nabla g\ra\,\d\mm=-\int gh\,\d\mm\qquad \forall g\in W^{1,2}_0(U).
\]
In this case the function $h$ is denoted $\Delta f$. In the case $U=\X$ we simply write $f\in D(\Delta_{loc})$.
\end{def1}

\subsection{$\RCD$ spaces and bits of second order calculus}
The following definition has been proposed in \cite{Gigli12}:
\begin{def1}[$\RCD(K,N)$ spaces]
A metric measure space $(\X,\sfd,\mm)$ is an $\RCD(K,N)$ space, $K\in\R$, $N\geq1$ provided it is a ${\sf CD}(K,N)$ space in the sense of Lott-Sturm-Villani (see \cite{Sturm06I}, \cite{Sturm06II}, \cite{Lott-Villani09}) and it is infinitesimally Hilbertian.
\end{def1}
For more details on the concept of ${\sf CD}(K,N)$ space and detailed bibliographical references we refer to the surveys \cite{Villani2017}, \cite{AmbrosioICM}, \cite{DGG}. Here we only recall some  of the properties of $\RCD$ spaces we shall most frequently use.

We start with the Bishop-Gromov inequality. For $k\in\R$ we denote by $\sink_k:\R\to\R$ the function defined by
\[
\sink_k''+k\sink=0,\qquad\sink_k(0)=0,\quad\sink_k(0)'=1.
\]
In particular we have
\[
\begin{split}
\sink_1(z)=\sin(z),\qquad\qquad\sink_0(z)=z,\qquad\qquad \sink_{-1}(z)=\sinh(z)
\end{split}
\]
for every $z\in\R$. The following result has been established in   \cite{Sturm06II}.
\begin{thm}[Bishop-Gromov Inequality]
Let \((\X,\di,\m)\) be a \({\sf CD}(K,N)\) m.m.s. with \(K\in\R\) and \(N> 1\). Fix $p\in\X$ and let $\mu:=\sfd(\cdot,p)_*\mm$. Then $\mu=s\mathcal L^1$ for some function $s:[0,{\rm diam}(\X)]\to\R^+$ such that
\begin{equation}
\label{eq:BG}
r\quad\mapsto\quad s(r) \big(\sink_{\frac{K}{N-1}}(r)\big)^{1-N}\qquad\text{ is not increasing on } [0,{\rm diam}(\X)].
\end{equation}
Also, the following integrated version of the monotonicity holds:
\begin{equation}
\label{eq:BGvol}
R\quad\mapsto\quad \tfrac{\mm(B_R(p))}{\int_0^R\big(\sink_{\frac{K}{N-1}}(r)\big)^{N-1}\,\d r}\qquad\text{ is not increasing on } [0,{\rm diam}(\X)].
\end{equation}

\end{thm}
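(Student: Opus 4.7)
The plan is to derive both the differential statement \eqref{eq:BG} and its integrated counterpart \eqref{eq:BGvol} directly from the definition of ${\sf CD}(K,N)$ via an intermediate Brunn--Minkowski type inequality, along the lines of Sturm's original argument.

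First, I would extract from the ${\sf CD}(K,N)$ condition --- which is a displacement convexity inequality for the R\'enyi entropy $S_N$ along $W_2$-geodesics --- a Brunn--Minkowski inequality of the form
\[
\mm(A_t)^{1/N} \ge \tau^{(1-t)}_{K,N}(\Theta_-)\,\mm(A_0)^{1/N} + \tau^{(t)}_{K,N}(\Theta_+)\,\mm(A_1)^{1/N}
\]
for Borel sets $A_0,A_1\subset\X$ with $0<\mm(A_i)<\infty$, where $A_t$ is the set of $t$-midpoints of geodesics with endpoints in $A_0\times A_1$, and $\Theta_\pm$ are the infimum/supremum of the distance between the two sets. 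The derivation is direct: apply ${\sf CD}(K,N)$ to the normalized restrictions $\mu_i:=\mm|_{A_i}/\mm(A_i)$ and drop the non-negative Jensen-type term that appears in the displacement convexity of $S_N$.

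Next, to obtain \eqref{eq:BG}, I would apply this inequality with $A_0=B_\eps(p)$ and $A_1$ a thin spherical shell at distance $R$ from $p$; choosing $t=r/R$ so that the $t$-midpoints concentrate in a thin shell at distance $r$, and letting first $\eps\downarrow 0$ and then the shell thicknesses tend to $0$, one arrives at the monotonicity of $r\mapsto s(r)\big(\sink_{K/(N-1)}(r)\big)^{1-N}$; the specific power $1-N$ and the $\sink$ factor come precisely from the asymptotics of the distortion coefficients $\tau^{(t)}_{K,N}$ as $\eps\downarrow 0$. The absolute continuity $\mu=s\,\mathcal L^1$ of $(\sfd(\cdot,p))_*\mm$ emerges as a byproduct of the uniform control on the mass of thin shells provided by the same Brunn--Minkowski estimate. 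Once \eqref{eq:BG} is in hand, \eqref{eq:BGvol} follows from the elementary calculus fact that if $f/g$ is non-increasing on $[0,D]$ with $g>0$, then so is $\int_0^R f\,/\int_0^R g$, together with the identity $\mm(B_R(p))=\int_0^R s(r)\,\d r$.

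The main obstacle is the rigorous handling of the double limiting procedure: ${\sf CD}(K,N)$ only gives information for absolutely continuous marginals, so $\delta_p$ cannot be used directly, and one must verify that in the limit the set $A_t$ is (up to $\mm$-negligible pieces) exactly the shell of radius $r$, i.e.\ that no non-radial geodesics from points close to $p$ contribute nontrivially to $A_t$. This requires a careful semicontinuity analysis for the distortion coefficients $\tau^{(t)}_{K,N}(\Theta_\pm)$ as $\Theta_\pm\to R$, together with a measurability argument for the midpoint sets; these technicalities are carried out in \cite{Sturm06II}.
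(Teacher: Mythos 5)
The paper quotes this theorem from \cite{Sturm06II} without reproducing a proof, and your sketch is a faithful reconstruction of Sturm's original argument (Theorem 2.3 there): derive a Brunn--Minkowski inequality from displacement convexity of the R\'enyi entropy, apply it with $A_0$ a small ball around $p$ and $A_1$ a thin annulus at radius $R$, pass to the limit $\eps\downarrow 0$ and shell width $\downarrow 0$ to obtain the density comparison \eqref{eq:BG}, and then deduce \eqref{eq:BGvol} via the monotone-quotient lemma --- which is also exactly how Sturm passes from the pointwise to the integrated estimate (following Gromov/Cheeger--Colding). So your strategy coincides with the cited source.

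One small imprecision worth flagging: in the Brunn--Minkowski inequality both distortion coefficients $\tau^{(1-t)}_{K,N}$ and $\tau^{(t)}_{K,N}$ must be evaluated at the \emph{same} $\Theta$, namely $\inf_{A_0\times A_1}\sfd$ when $K\geq 0$ and $\sup_{A_0\times A_1}\sfd$ when $K<0$ --- this is forced by the monotonicity of $\theta\mapsto\tau^{(s)}_{K,N}(\theta)$, not by a pairing of $\Theta_-$ with one term and $\Theta_+$ with the other. The slip is harmless for your argument, since you discard the $A_0$-term anyway and only the $A_1$-coefficient survives, with $\Theta\to R$ as $\eps\downarrow 0$, but it should be corrected if the proof is written out in full.
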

In relating Sobolev calculus to metric properties of the space, it is useful to recall that $\RCD$ spaces have the \emph{Sobolev-to-Lipschitz} property, i.e.:
\[
\text{any $f\in W^{1,2}_{loc}(\X)$ with $|\d f|\leq 1$ has a 1-Lipschitz representative.}
\]
Analysis on $\RCD$ spaces is strongly based upon the concept of \emph{test function}, that provides a reasonable replacement for that of smooth function in our setting. Following \cite{Savare13} we define the space $\Test_{loc}(\X)$ of \emph{local test functions} on $\X$:
\[
\Test_{loc}(\X):=\big\{f\in L^\infty_{loc}(\X)\cap \Lip_{loc}(\X)\cap D(\Delta_{loc})\ :\ \Delta f\in W^{1,2}_{loc}(\X)\big\}.
\]
These functions should be thought of as `the smoothest functions' on $\X$, in a sense, and they replace what on a Riemannian manifolds are $C^\infty$ functions. Two notable properties, valid on arbitrary $\RCD(K,\infty)$ spaces are:
\begin{itemize}
\item[1)] \emph{Cut-off}. For $K\subset \X$ compact and $U\supset K$ open there is $f\in\Test_{loc}(\X)$ identically 1 on $K$ and with support in $U$. See \cite{AmbrosioMondinoSavare13-2}.
\item[2)] \emph{Algebra}. For $f,g\in\Test_{loc}(\X)$ we have $fg\in \Test_{loc}(\X)$. See \cite{Savare13}.
\end{itemize}
The latter property follows from the fact that for $f\in \Test_{loc}(\X)$ we have $|\d f|^2\in W^{1,2}_{loc}(\X)$, which in turn is useful to define second order Sobolev spaces (see \cite{Gigli14}).
\begin{def1}[The spaces $W^{2,2}_{loc}(\X)$ and $H^{2,2}_{loc}(\X)$] Let $(\X,\sfd,\mm)$ be a $\RCD(K,\infty)$ space and $f\in W^{1,2}_{loc}(\X)$. We say that $f\in W^{2,2}_{loc}(\X)$ provided there is a symmetric tensor $A\in L^2_{loc}(T^{\otimes 2}\X)$ such that
\[
\begin{split}
\int h A(\nabla g,\nabla g)\,\d\mm=\int -h\la\nabla f,\nabla g\ra\,\div(h\nabla g)-\tfrac12h\la\nabla f,\nabla(|\d g|^2)\ra\,\d\mm 
\end{split}
\]
for any $g\in \Test_{loc}(\X)$ and $h\in \Lip_{bs}(\X)$. The tensor $A$ is called Hessian of $f$ and denoted $\Hess f$.

The subspace $H^{2,2}_{loc}(\X)\subset W^{2,2}_{loc}(\X)$ is the collection of all functions $f\in W^{2,2}_{loc}(\X)$ such that: for any $U\subset\X$ open bounded there is a sequence $(f_n)\subset \Test_{loc}(\X)$ such that
\[
\|f_n-f\|_{L^2(U)}+\||\d f_n-\d f|\|_{L^2(U)}+\||\Hess f_n-\Hess f|_{\sf HS}\|_{L^2(U)}\to 0
\]
as $n\to\infty$, where here and in what follows we denote by $|\cdot|_{\sf HS}$ the pointwise norm in $L^2_{loc}(T^{\otimes 2}\X)$.
\end{def1}
Notice that implicit in the above there is the non-trivial fact that $\Test_{loc}(\X)\subset W^{2,2}_{loc}(\X)$.

\medskip

An important tool that we will use in our analysis is that of Regular Lagrangian Flow of a vector field. It provides a robust alternative to the classical Cauchy-Lipschitz theory in our setting. We shall need this concept only for a time independent vector field, thus we will recall the main definitions and results in this case.
\begin{def1}[Regular Lagrangian Flow]
Let \(v\in L^0(T\X)\). We say that a map \(\Fl:\R\times \X\rightarrow \X\) is a Regular Lagrangian Flow of  \(v\) if the following are satisfied:
\begin{itemize}
	\item For every $T>0$ there exists \(C_T>0\) such that
\begin{equation}
\label{eq:boundcompr}
\Fl_{t*}\m\leq C_T\m \text{ for every } t\in[-T,T]; 
\end{equation}
	\item for \(\m\)-a.e. \(x\in \X\) the function \(\R\ni t\rightarrow \Fl_t(x)\) is continuous and satisfies \(\Fl_0(x)=x\);
	\item for every \(f\in \Test_{loc}(\X)\) it holds that for \(\m\)-a.e. \(x\in \X\) the function \(\R\ni t\rightarrow f\circ \Fl_t(x)\) is absolutely continuous and
\begin{equation}\label{RLFder}
\frac{\d}{\d t}f\circ \Fl_t(x)=\d f(v) \circ \Fl_t(x)\qquad a.e.\ t.
\end{equation}
\end{itemize}
\end{def1}
Existence and uniqueness of Regular Lagrangian Flows can be established under suitable regularity assumption on the vector field $v$. Among other things, a control on the covariant derivative $\nabla v$ is required. We shall denote by $W^{1,2}_{C}(T\X)$ the space of $L^2$ vector fields with covariant derivative in $L^2$, referring to \cite{Gigli14} for the definition. Here we just recall that if $f\in W^{2,2}(\X)$, then $\nabla f\in W^{1,2}_{C}(T\X)$ and $\nabla(\nabla f)=\Hess f$ and that for $v\in W^{1,2}_C(T\X)$ and $\eta\in \Lip_{bs}(\X)$ we have $\eta v\in W^{1,2}_C(T\X)$ with $\nabla(\eta v)=\eta\nabla v+\nabla\eta\otimes v$.

With this said we have:
\begin{thm}
Let $(\X,\sfd,\mm)$ be an $\RCD(K,\infty)$ space and $v\in L^\infty(T\X)\cap W^{1,2}_C(T\X)\cap D(\div)$ be with $\div(v)\in L^\infty(\X)$.

Then there exists a unique Regular Lagrangian Flow $\Fl$ of $v$. Uniqueness is intended as: if both $\Fl$ and $\tilde\Fl$ are two such flows, then for $\mm$-a.e.\ $x$ we have $\Fl_t(x)=\tilde\Fl_t(x)$ for any $t\in \R$.

Finally, for $\mm$-a.e.\ $x\in\X$ the curve $t\mapsto\Fl_t(x)$ is absolutely continuous and
\begin{equation}
\label{eq:speedrlf}
\ms(\Fl_\cdot(x),t)=|v|\circ F_t(x)\qquad a.e.\ t\in[0,1],
\end{equation}
where by $\ms(\gamma,t)$ we indicate the metric speed of the curve $\gamma$ at the time $t$.
\end{thm}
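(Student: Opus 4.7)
The plan is to reduce the statement to the general well-posedness theory of Regular Lagrangian Flows on $\RCD$ spaces à la Ambrosio–Trevisan, separating the argument into two essentially independent parts: existence/uniqueness of the flow, and the verification of the metric speed identity \eqref{eq:speedrlf}.

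For existence and uniqueness, the hypotheses $v\in L^\infty(T\X)\cap W^{1,2}_C(T\X)\cap D(\div)$ with $\div(v)\in L^\infty$ are precisely those ensuring well-posedness of the continuity equation $\partial_t\mu_t+\div(v\mu_t)=0$ in the class of solutions with bounded density with respect to $\mm$. I would first construct such a solution $(\mu_t)$ starting from $\mu_0=\rho\mm$ with $\rho\in L^\infty\cap L^1$ by the dual semigroup generated by $f\mapsto \d f(v)$, using \eqref{RLFder} as the defining identity on $\Test_{loc}(\X)$; the $L^\infty$ bound on $\div(v)$ yields the one-sided Grönwall estimate producing the compression bound \eqref{eq:boundcompr}. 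Uniqueness of $(\mu_t)$ follows from commutator estimates that crucially use the covariant derivative $\nabla v\in L^2$ in order to regularize $v$ consistently with the non-smooth structure. The superposition principle then provides a probability measure $\ppi$ on $C([0,1],\X)$ whose marginals are $\mu_t$; disintegrating $\ppi$ with respect to $\e_0$ yields a Borel map $x\mapsto\Fl_\cdot(x)$, and uniqueness of the continuity equation applied initial-datum-wise upgrades this to the almost-sure uniqueness of the flow as stated.

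The metric speed formula \eqref{eq:speedrlf} can then be established by a separate duality argument. Since $v\in L^\infty(T\X)$, \eqref{eq:boundcompr} and \eqref{RLFder} show that $t\mapsto\Fl_t(x)$ is absolutely continuous for $\mm$-a.e.\ $x$ with metric speed at most $\||v|\|_\infty$. For the sharp identity, I would pick a countable family $\{\varphi_n\}\subset\Test_{loc}(\X)$ with $|\d\varphi_n|\leq 1$ that is dense enough to realize the pointwise norm on $L^0(T\X)$ (using Sobolev-to-Lipschitz and the density of $\d$-images in $L^0(T^*\X)$): from \eqref{RLFder} one deduces, for $\mm$-a.e.\ $x$ and a.e.\ $t$,
\[
\ms(\Fl_\cdot(x),t)\;\geq\;|\d\varphi_n(v)|\circ\Fl_t(x),
\]
and taking the supremum in $n$ gives the lower bound $|v|\circ\Fl_t(x)$; the reverse inequality comes from the upper bound on the speed induced by \eqref{RLFder} applied to locally Lipschitz approximations of the distance.

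The main obstacle is the uniqueness of the continuity equation: this is the non-trivial extension of DiPerna–Lions theory to the $\RCD$ setting, where the lack of smooth convolution forces one to regularize via the heat flow and to quantify the commutator $[H_\eps,\d f(v)]$ using the full $W^{1,2}_C$ regularity of $v$ together with the Bakry–Émery estimates. Everything else—compression bounds, superposition, passage from the PDE to the flow, and the metric speed identity—is then essentially bookkeeping once this uniqueness is in hand.
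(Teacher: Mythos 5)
The paper does not prove this theorem itself: it is stated as a preliminary result and attributed directly to Ambrosio--Trevisan \cite{Ambrosio-Trevisan14} (with the Euclidean precursors \cite{DiPerna-Lions89}, \cite{Ambrosio04}). Your sketch faithfully reproduces exactly that machinery -- well-posedness of the continuity equation via heat-flow regularization and commutator estimates using $\nabla v\in L^2$, the compression bound from the $L^\infty$ bound on $\div(v)$, the superposition principle, disintegration along $\e_0$, and a duality argument with $1$-Lipschitz test functions for the metric speed identity -- so it is essentially the same approach as the paper's cited source, and correct at the level of detail given.
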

Both the definition and the existence and uniqueness result for Regular Lagrangian Flows come from \cite{Ambrosio-Trevisan14}, that in turn is strongly inspired by the earlier works  \cite{DiPerna-Lions89} and  \cite{Ambrosio04} in the Euclidean setting.

\section{A general splitting principle}

\subsection{Producing a coordinate function}

In the practice of studying rigidity properties of spaces with lower Ricci bounds, one often finds out a function $u$ with somehow controlled gradient and Laplacian and for which equality holds in the Bochner inequality, and this in turn gives information about the structure of the Hessian of $u$. The typical informations of $u$ are that the identities
\[
\begin{split}
|\d u|&=\varphi\circ u,\\
\Delta u&=\xi\circ u,\\
\Hess(u)&=\zeta\circ u\,|\d u|\,\Id+\tilde\zeta\circ u\,|\d u|\,\e_1\otimes\e_1,
\end{split}
\]
where $\e_1=\tfrac{\nabla u}{|\nabla u|}$ on $|\nabla u|>0$, for suitable functions $\varphi,\zeta,\tilde\zeta,\xi:\R\to\R$.

As we shall see, when this occurs the space splits as warped product $\R\times_w\X'$ and the `$\R$-coordinate' of the isomorphism is the post-composition of $u$ with a suitable function $\eta:\R\to\R$.

To have a better understanding of the warped product we will ultimately obtain, it is convenient to identify right now who is such suitable post-composition. This is the scope of the following lemma, whose proof only relies in handling  chain rules: 
\begin{lemma}[Producing a coordinate function]\label{le:coord}
Let $(\X,\sfd,\mm)$ be a $\RCD(K,N)$ space and $u\in H^{2,2}_{loc}(\X)$ be with 
\begin{equation}
\label{eq:nablaf}
|\d u|=\varphi\circ u\qquad\mm-a.e.
\end{equation}
for some $\varphi:u(\X)\to(0,\infty)$ in $C^{1,1}_{loc}$. Put $\e_1:=\frac{\nabla u}{|\nabla u|}$ (this is $\mm$-a.e.\ well defined as $|\nabla u|>0$ $\mm$-a.e.\ as a consequence of our assumption on $\varphi$) and assume that for some locally Lipschitz functions $\zeta,\tilde\zeta:u(\X)\to\R$ we have
\begin{equation}
\label{eq:hessf}
\Hess u=\zeta\circ u |\d u|\Id+\tilde\zeta\circ u|\d u|\e_1\otimes\e_1.
\end{equation}
Then 
\begin{equation}
\label{eq:phipsi}
\varphi'=\zeta+\tilde\zeta,
\end{equation}
 any function $\eta:u(\X)\to(0,\infty)$ in $C^{1,1}_{loc}$  such that  $\eta'=\tfrac1\varphi$ is  invertible and  the function $\b:=\eta\circ u$ is in $H^{2,2}_{loc}(\X)$ with
\begin{subequations}
\label{eq:proprb}
\begin{align}
\label{eq:proprb1}
|\d \b|&=1,\\
\label{eq:proprb2}
\Hess \b&=\zeta\circ \eta^{-1}\circ \b\big(\Id-\e_1\otimes \e_1\big).
\end{align}
\end{subequations}
If moreover we have $u\in D(\Delta_{loc})$ with $\Delta u=\xi\circ u$ for some $\xi:u(\X)\to\R$ Borel locally bounded, then $\b\in D(\Delta_{loc})$ with
\begin{equation}
\label{eq:Deltab}
\Delta \b=\big(\tfrac{\xi}{\varphi}-\varphi'\big)\circ\eta^{-1}\circ \b.
\end{equation}
\end{lemma}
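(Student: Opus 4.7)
The plan is to reduce the entire lemma to chain rules, applied successively to the differential, the Hessian and the Laplacian, with the compatibility identity \eqref{eq:phipsi} as a preliminary input. To derive \eqref{eq:phipsi}, I would compute $\d(|\d u|^2)$ in two ways. On one side, the Lipschitz chain rule applied to $t\mapsto\varphi(t)^2$ gives $\d(|\d u|^2)=2(\varphi\circ u)(\varphi'\circ u)\,\d u$. On the other, for any vector $v$ one has $\tfrac12\d(|\d u|^2)(v)=\Hess u(\nabla u,v)$, and plugging in \eqref{eq:hessf} together with $\la \e_1,\nabla u\ra=|\d u|=\varphi\circ u$ collapses the right-hand side to $(\zeta+\tilde\zeta)\circ u\,|\d u|\,\d u(v)$. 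Matching yields \eqref{eq:phipsi} $\mm$-a.e.\ along $u$, which is all that is needed below. The invertibility of any $\eta\in C^{1,1}_{loc}$ with $\eta'=1/\varphi$ is then immediate from strict monotonicity, since $\varphi>0$.

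Setting $\b:=\eta\circ u$, the first-order chain rule gives $\d\b=(\eta'\circ u)\,\d u=\tfrac{1}{\varphi\circ u}\d u$, whence $|\d\b|=1$, proving \eqref{eq:proprb1}. For the second-order part the two chain rules I need are
\begin{equation*}
\Hess\b=(\eta'\circ u)\Hess u+(\eta''\circ u)\,\d u\otimes\d u,\qquad \Delta\b=(\eta'\circ u)\Delta u+(\eta''\circ u)|\d u|^2.
\end{equation*}
Substituting $\eta'=1/\varphi$, $\eta''=-\varphi'/\varphi^2$, expression \eqref{eq:hessf}, the identity $\d u\otimes\d u=(\varphi\circ u)^2\,\e_1\otimes\e_1$, and then using \eqref{eq:phipsi} to replace $\tilde\zeta-\varphi'$ by $-\zeta$, the Hessian of $\b$ collapses to $\zeta\circ u\,(\Id-\e_1\otimes\e_1)$, which is \eqref{eq:proprb2} after rewriting $u=\eta^{-1}\circ\b$. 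The Laplacian identity \eqref{eq:Deltab} follows by the same substitution.

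The genuinely non-routine step, and the main obstacle, is justifying these two second-order chain rules for $C^{1,1}_{loc}$ post-composition in the non-smooth $\RCD$ setting, and in particular upgrading the conclusion from $\b\in W^{2,2}_{loc}(\X)$ to $\b\in H^{2,2}_{loc}(\X)$. For the bare $W^{2,2}$ identity one would test against $g\in\Test_{loc}(\X)$ and $h\in\Lip_{bs}(\X)$ in the definition of Hessian and rearrange using the first-order Leibniz and chain rules, exploiting that $\eta',\eta''$ are locally bounded on $u(\X)$ (from $\eta\in C^{1,1}_{loc}$); the resulting tensor lies in $L^2_{loc}$ because all coefficients remain bounded on $u(\supp h)$. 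To obtain $\b\in H^{2,2}_{loc}(\X)$ I would then fix a bounded open $U$, pick an approximating sequence $(u_n)\subset\Test_{loc}(\X)$ converging to $u$ in the $H^{2,2}$-sense on $U$, and pass $\b_n:=\eta\circ u_n$ to the limit: after truncating $\eta$ outside a compact interval containing $u(U)$, standard dominated convergence transfers the $L^2$-convergence of $u_n,\d u_n,\Hess u_n$ to that of $\b_n,\d\b_n,\Hess\b_n$. All the algebraic manipulations needed for this bookkeeping are standard consequences of the calculus of \cite{Gigli14}; the only care required is in controlling the range of the approximants uniformly in $n$.
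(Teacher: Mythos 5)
Your proof is correct and follows essentially the same route as the paper: derive \eqref{eq:phipsi} by equating two expressions for the differential of $|\d u|$ (you work with $|\d u|^2$, the paper with $|\d u|$, a cosmetic difference), then apply the second-order chain rule from \cite{Gigli14} and simplify using \eqref{eq:phipsi}. The algebra for \eqref{eq:proprb1}, \eqref{eq:proprb2}, \eqref{eq:Deltab} is identical.

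One point in your justification of $\b\in H^{2,2}_{loc}(\X)$ is not quite right as stated. You propose to take $(u_n)\subset\Test_{loc}(\X)$ converging to $u$ in the $H^{2,2}$-sense on $U$ and to use $\b_n:=\eta\circ u_n$ as the approximating sequence. But membership in $H^{2,2}_{loc}$ requires, by definition, an approximating sequence in $\Test_{loc}(\X)$, and $\eta\circ u_n$ need not lie in $\Test_{loc}(\X)$ when $\eta$ is merely $C^{1,1}_{loc}$: computing $\Delta(\eta\circ u_n)=\eta'\circ u_n\,\Delta u_n+\eta''\circ u_n\,|\d u_n|^2$ and then trying to differentiate this once more to check $\Delta(\eta\circ u_n)\in W^{1,2}_{loc}$ runs into the need to differentiate $\eta''$, which only an $\eta$ of higher (e.g.\ $C^{2,1}_{loc}$ or $C^\infty_{loc}$) regularity affords. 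The fix is standard and is what the paper's bullet point hints at: one first mollifies $\eta$ to a smooth $\eta_m$ (so that $\eta_m\circ u_n\in\Test_{loc}(\X)$), checks via \cite[Eq.\ 3.3.32]{Gigli14} and the local boundedness of $\eta',\eta''$ that the resulting Hessians converge in $L^2(U)$, and passes to a diagonal sequence. You should make this double approximation explicit; the rest of your argument is sound.
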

\begin{proof}
Since $u\in H^{2,2}_{loc}(\X)$ is with $|\d u|\in L^\infty_{loc}(\X)$ (by \eqref{eq:nablaf} )we have $|\nabla u|\in W^{1,2}_{loc}(\X)$ (recall \cite[Proposition 3.3.22]{Gigli14})), thus we can write
\[
\varphi'\circ u\,\d u\stackrel{}=\d|\d u|\stackrel{}=\Hess u(\tfrac{\d u}{|\d u|})=(\zeta\circ u+\tilde\zeta\circ u)\,\d u
\]
and since $|\d u|>0$ $\mm$-a.e., property \eqref{eq:phipsi} follows.

The fact that $\eta$ is invertible follows directly from $\eta'=\tfrac1\varphi>0$ and the fact that $\b\in H^{2,2}_{loc}(\X)$ from the chain rule noticing that:
\begin{itemize}
\item[-] if $u\in \Test_{loc}(\X)$  and $\varphi\in C^\infty_{loc}(u(\X))$, then $\varphi\circ u\in \Test_{loc}(\X)$ (by direct computation). 
\item[-] Formula \cite[Equation 3.3.32]{Gigli14}  shows that if $u$ is also locally Lipschitz, then the Hessian of $\varphi\circ u$ is locally in $L^2$.
\end{itemize}
Then \eqref{eq:proprb1} follows from $|\d \b|=\eta'\circ u|\d u|=(\eta'\varphi)\circ u$ and the choice of $\eta$. For \eqref{eq:proprb2} we use the chain rule for the Hessian \cite[Equation 3.3.32]{Gigli14} to compute
\[
\begin{split}
\Hess\b&=\eta'\circ u\Hess u+\eta''\circ u\d u\otimes\d u\\
\text{(by \eqref{eq:nablaf},\eqref{eq:hessf})}\qquad& =\zeta\circ u\,\Id+\tilde\zeta\circ u\e_1\otimes\e_1-\varphi'\circ u\e_1\otimes\e_1\\
\text{(by \eqref{eq:phipsi})}\qquad &=\zeta\circ u\,\big(\Id -\e_1\otimes\e_1\big),
\end{split}
\]
which is  \eqref{eq:proprb2}. The last claim is also a direct consequence of the assumptions and of the chain rule for the Laplacian.
\end{proof}

\subsection{Set up and statement of the splitting result}\label{se:setup}

From now on we shall assume the following:
\begin{itemize}
\item[a)] $(\X,\sfd,\mm)$ is an $\RCD(K,N)$ space, $K\in\R$, $N<\infty$, with $\supp(\mm)=\X$.
\item[b)] $\b:\X\to\R$ is a function in $H^{2,2}_{loc}\cap D(\Delta_{loc})$ such that
\begin{subequations}
\label{eq:b}
\begin{align}
\label{eq:b1}
|\d \b|&=1,\qquad\mm-a.e.\\
\label{eq:b2}
\Delta \b&=\psi_\mm\circ \b,\\
\label{eq:b3}
\Hess\b&=\psi_\sfd\circ \b\,(\Id-\e_1\otimes\e_1)\qquad\text{ where }\e_1:=\nabla\b=\tfrac{\nabla\b}{|\nabla \b|},
\end{align}
\end{subequations}
for some $\psi_\mm,\psi_\sfd:\R\to\R$ locally Lipschitz. By \eqref{eq:b1} and the Sobolev-to-Lipschitz property, $\b$ has a 1-Lipschitz representative and we shall always identify it with such representative.
\end{itemize}
These assumptions have the following rather direct consequences:
\begin{itemize}
\item[-] The Regular Lagrangian Flow $\Fl:\R\times\X\to\X$ of  $\nabla\b$ is well defined. Indeed,   for $\eta\in\Lip_{bs}(\X)$ the vector field $\eta\nabla\b$ admits a Regular Lagrangian Flow thanks to the properties \eqref{eq:b}. Then taking into account the finite speed of propagation of this flow (from \eqref{eq:b1} and \eqref{eq:speedrlf}) it is easy to see that  taking $\eta$ equal 1 on a ball of radius $R$ and then letting $R\uparrow\infty$ and using the fact that $\div(\nabla\b)=\Delta\b$ is bounded on the level sets of $\b$ we can find the desired flow $(\Fl_t)$.
\item[-] $\Fl:\R\times\X\to\X$ has a continuous representative, still denoted $\Fl$, and its Lipschitz constant on $[-T,T]\times\X$ is bounded for every $T>0$. Such Lipschitz regularity follows from the fact that the covariant derivative of $\nabla\b$ is bounded on $\b^{-1}([-T,T])$, because of \eqref{eq:b3}, and \cite[Section 2]{BS18a}.
\begin{remark}\label{re:BS}{\rm In \cite{BS18a} the authors assumed the space to be compact in order to deduce Lipschitz regularity of the flow. This was needed as they were using the main result in \cite{GT17} that, at that time, provided a necessary second-order differentiation formula on finite-dimensional and compact $\RCD$ spaces. A subsequent improvement of this paper \cite{GigTam18} established the same second-order differentiation formula in the non-compact setting, thus allowing Bru\`e-Semola's result to be extended to the non-compact setting.
}\fr\end{remark}
\item[-] The formula
\begin{equation}
\label{eq:bflusso}
\b(\Fl_t(x))=\b(x)+t\qquad\forall t\in\R,\ x\in\X
\end{equation}
holds. Indeed, we know from \eqref{RLFder} that for $\mm$-a.e.\ $x$ the function $t\mapsto\b(\Fl_t(x))$ is in $W^{1,1}_{loc}(\R)$ (in fact locally absolutely continuous, as $\b$ is Lipschitz) with derivative equal to $|\d \b|^2(\Fl_t(x))$. By integration and using \eqref{eq:boundcompr}  we see from \eqref{eq:b1} that for given $t\in\R$ the formula \eqref{eq:bflusso} holds for $\mm$-a.e.\ $x$. The claim follows by the continuity of $\Fl$ and $\b$.
\item[-] For any two $x,y\in\X$ with $\b(x)=\b(y)$ there is a Lipschitz curve joining them lying entirely on the same level set. Indeed, if $\gamma$ is any Lipschitz path connecting $x$ any $y$, say a geodesic, then $t\mapsto \Fl_{\b(x)-\b(\gamma_t)}(\gamma_t)$ is still Lipschitz (by the above Lipschitz regularity) joins $x$ and $y$ and lies on the same level set of $x,y$ (by \eqref{eq:bflusso}).
\end{itemize}
We can now define the metric measure space $(\X',\sfd',\mm')$ by putting $\X':=\b^{-1}(0)$ and then defining
\begin{equation}
\label{eq:defdp}
\sfd'(x,y)^2:=\inf\int_0^1|\dot\gamma_t|^2\,\d t,
\end{equation}
the inf being taken among all continuous curves $\gamma:[0,1]\to\X'\subset\X$ joining $x$ and $y$ and 
\begin{equation}
\label{eq:defmp}
\mm':=c^{-1}\,\Pr_*(\mm\restr{\b^{-1}([0,1])}),\qquad\text{for}\qquad c:=\int_0^1w_\mm\,\d\mathcal L^1,
\end{equation}
where $\Pr:\X\to\X'$ is defined as $\Pr(x):=\Fl_{-\b(x)}(x)$ and $w_\mm$ is defined by \eqref{eq:defwm} and \eqref{eq:normw} below. It is clear that $\sfd'$ is a (finite) distance on $\X'$ and, since clearly from the above $\Pr$ is locally Lipschitz, that it is complete and induces the same topology on $\X'$ as the one induced by $\sfd$. Then obviously $\mm'$ is a Borel measure and from \eqref{eq:T} below it easily follow that it is finite on bounded sets (and thus  Radon).

We also define
\begin{equation}
\label{eq:defT}
\begin{array}{rccc}
\sfT:&\X&\to&\R\times\X',\\
&x&\mapsto&(\b(x),\Pr(x))
\end{array}
\end{equation}
and notice that
\begin{equation}
\label{eq:T}
\text{$\sfT$ is locally Lipschitz, invertible, with locally Lipschitz inverse,}
\end{equation}
where on $\R\times\X' $ we are, for the moment, putting the  distance $\sfd_+((t,x),(s,y)):=\sfd'(x,y)+|t-s|$. Indeed, the local Lipschitz regularity of $\sfT$ is obvious. For the other inequality notice that
\begin{equation}
\label{eq:Tbilip}
\sfd(x,y)\leq \sfd(\Fl_{\b(x)}(\Pr(x)),\Fl_{\b(x)}(\Pr(y)))+ \sfd(\Fl_{\b(x)-\b(y)}(y),y),\qquad\forall x,y\in\X,
\end{equation}
and use that $t\mapsto \Fl_t(y)$ is 1-Lipschitz (by \eqref{eq:b1} and \eqref{eq:speedrlf}), that $\Fl_{z}:\X'\to\X$ is Lipschitz uniformly on $z\in[-T,T]$ for any $T>0$ (by \cite[Section 2]{BS18a} applied to the vector field $\eta\circ\b\nabla\b$ with $\eta$ smooth cut-off function with bounded support) and that $\sfd\leq\sfd'$ on $\X'$ (obviously from the definition). Observe also that
\begin{equation}
\label{eq:invT}
\text{the inverse of $\sfT$ is the map $(t,x')\mapsto\Fl_t(x')$}
\end{equation}
as it is easily seen from the definitions recalling also \eqref{eq:bflusso}.

\bigskip

With this said, we now want to equip the set $\R\times\X'$ with a warped product structure. To this aim, let us introduce the locally absolutely continuous functions $w_\mm,w_\sfd:\R\to(0,+\infty)$ defined by
\begin{subequations}
\begin{align}
\label{eq:defwm}
(\log(w_\mm))'&=\psi_\mm,\\
\label{eq:defwd}
(\log(w_\sfd))'&=\psi_\sfd,
\end{align}
\end{subequations}
normalized in such a way that
\begin{equation}
\label{eq:normw}
w_\mm(0)=w_\sfd(0)=1.
\end{equation}
Then on $\R\times\X'$ we define the (Radon) warped product measure $\mm_w$  by the formula
\begin{equation}
\label{eq:defmw}
\int f(t,x)\,\d\mm_w:=\int\Big( \int f(t,x)\,\d\mm'(x) \Big) w_\mm^2(t)\,\d\mathcal L^1(t)
\end{equation}
and the warped product distance $\sfd_w$ as
\begin{equation}
\label{eq:defdw}
\sfd_w\big((t,x),(s,y)\big)^2=\inf\int_0^1 |\dot\eta_r|^2+w_\sfd(\eta_r)|\dot\gamma_r|^2\,\d r,
\end{equation}
the inf being taken among all absolutely continuous curves $\gamma:[0,1]\to\X'$ and $\eta:[0,1]\to\R$ joining $x$ to $y$ and $t$ to $s$ respectively. Since $w_\sfd$ is continuous and strictly positive, it is clear that $\sfd_w$ is locally equivalent to (i.e.\ up to multiplicative constants controls and is controlled by) the distance $\sfd_+$ used above. In particular, it induces the product topology.

We can now state the main result of this chapter:
\begin{thm}\label{thm:genspl}
Under the assumptions $(a,b)$ stated above, the following holds.

The map $\sfT$ defined in \eqref{eq:defT} is a measure preserving isometry from $\X$ to $\R\times_w\X'$, the latter being the space $\R\times\X'$ equipped with the distance $\sfd_w$ and the measure $\mm_w$.
\end{thm}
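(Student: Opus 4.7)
The statement splits naturally into two assertions: that $\sfT$ pushes $\mm$ to $\mm_w$, and that $\sfT$ is an isometry between $(\X,\sfd)$ and $(\R\times\X',\sfd_w)$. We already know from \eqref{eq:T} and \eqref{eq:invT} that $\sfT$ is a locally bi-Lipschitz bijection with inverse $(t,x')\mapsto\Fl_t(x')$, so the proof will consist of analyzing how the flow $\Fl_t$ transports measure and distance from the reference leaf $\X'=\b^{-1}(0)$ to a general leaf $\b^{-1}(t)$, using the structural equations \eqref{eq:b2} and \eqref{eq:b3}.

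\noindent\textbf{Measure part.} Because $|\d\b|=1$ $\mm$-a.e.\ one gets a coarea-type disintegration $\mm=\int_\R\mm_t\,\d\mathcal L^1(t)$ with $\mm_t$ concentrated on $\b^{-1}(t)$ (or, equivalently, the map $\b$ pushes $\mm$ to a constant multiple of $\mathcal L^1$ on slabs, thanks to \eqref{eq:bflusso}). The effect of $\Fl_t$ on the ambient measure is controlled by the divergence identity $\div(\nabla\b)=\Delta\b=\psi_\mm\circ\b$: the Jacobian density $J_t$ of $(\Fl_t)_*\mm$ relative to $\mm$ evolves according to $\tfrac{\d}{\d t}\log J_t=\psi_\mm\circ\Fl_t$, which together with $\b\circ\Fl_t=\b+t$ and the normalization \eqref{eq:normw} gives $J_t\equiv w_\mm(t+\b)/w_\mm(\b)$. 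Composing with the coarea disintegration shows that the level-set measure $\mm_t$ on $\b^{-1}(t)$ equals $w_\mm^2(t)\cdot(\Fl_t)_*\mm_0$ (up to the normalization constant $c$ built into the definition \eqref{eq:defmp} of $\mm'$). Plugging this into \eqref{eq:defmw} and using \eqref{eq:invT} gives $\sfT_*\mm=\mm_w$.

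\noindent\textbf{Distance part.} Here the key computation, powered by \eqref{eq:b3}, is that $\Fl_t$ scales the intrinsic distance on $\X'$ by exactly $w_\sfd(t)$. Indeed, for a vector field $V$ obtained by push-forward of a constant vector on $\X'$ along the flow (i.e.\ with $\mathcal{L}_{\nabla\b}V=0$), the Hessian identity reads $\nabla_{\nabla\b}V=\nabla_V\nabla\b=\psi_\sfd V$ on the orthogonal distribution, so $\tfrac{\d}{\d t}\log|V|=\psi_\sfd\circ\Fl_t$, giving the multiplicative scaling $|V|\circ\Fl_t=w_\sfd(t)|V|$ via \eqref{eq:defwd} and \eqref{eq:normw}. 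Rigorously in the $\RCD$ category this scaling should be read off by differentiating, along the flow of $\nabla\b$, the squared length of a curve on $\b^{-1}(s)$ and applying the Hessian formula, in the spirit of the second-order differentiation results of \cite{GT17},\cite{GigTam18} quoted in Remark \ref{re:BS}. Once this is established, the upper bound $\sfd(x,y)\leq\sfd_w(\sfT(x),\sfT(y))$ follows by lifting an almost-optimal pair of curves $(\eta,\gamma)$ in $\R\times\X'$ to the curve $r\mapsto\Fl_{\eta(r)}(\gamma(r))$ in $\X$ and estimating its metric speed via a Leibniz-type computation: the $\b$-component contributes $|\dot\eta_r|$, the transversal component contributes $w_\sfd(\eta_r)|\dot\gamma_r|$, and the two are orthogonal in the Hilbert tangent module, producing exactly the integrand in \eqref{eq:defdw}. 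The lower bound is the dual statement: given an absolutely continuous curve $\sigma$ in $\X$ joining $x$ and $y$, decompose its metric derivative into the parallel part $\tfrac{\d}{\d r}\b\circ\sigma$ and a transversal part, and use the reverse scaling $w_\sfd(\b)^{-1}$ to project onto an admissible competitor $(\b\circ\sigma,\Pr\circ\sigma)$ for $\sfd_w(\sfT(x),\sfT(y))$, whose warped length is no larger than that of $\sigma$.

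\noindent\textbf{Main obstacle.} The delicate point is the rigorous proof of the transversal scaling law on the nonsmooth side: one must show that, for $\mm$-a.e.\ $x\in\X'$ and every $t$, the intrinsic metric on the leaf $\b^{-1}(t)$ pulled back by $\Fl_t$ equals $w_\sfd(t)^2$ times the intrinsic metric on $\X'$ (or the appropriate power matching the convention in \eqref{eq:defdw}), including the equality case, not just an inequality. This requires combining the pointwise Hessian identity \eqref{eq:b3} with the Lipschitz regularity of $\Fl$ from \cite{BS18a} and a careful analysis of how the gradient flow interacts with velocities of curves orthogonal to $\nabla\b$. Once this scaling is in hand, together with the measure-preserving statement and \eqref{eq:bflusso}, the identification of $\sfT$ with an isomorphism onto $\R\times_w\X'$ is a bookkeeping exercise.
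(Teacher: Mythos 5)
Your measure part is in the right spirit and parallels the paper's argument: the paper indeed computes the Jacobian of $(\Fl_t)_*\mm$ via the continuity equation and $\div(\nabla\b)=\psi_\mm\circ\b$, obtaining $(\Fl_t)_*\mm=\Psi_{\mm,t}\circ\b\,\mm$ with $\Psi_{\mm,t}=w_\mm(\cdot-t)/w_\mm$, then disintegrates $\mm$ along $\b$ and identifies the fiber measures with pushforwards of $\mm'$ under $\Fl_z$ (Lemmas \ref{le:muac}, \ref{le:dism}, Proposition \ref{prop:Tmeaspres}). Your outline compresses these steps but is essentially the same route.

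Your distance part, however, has a genuine gap, and it is precisely at the ``main obstacle'' you flag but do not resolve. You propose to (i) lift near-optimal pairs $(\eta,\gamma)$ in $\R\times\X'$ to curves $r\mapsto\Fl_{\eta(r)}(\gamma(r))$ and identify their metric speed as $\sqrt{|\dot\eta|^2+w_\sfd^2(\eta)|\dot\gamma|^2}$ by ``orthogonality in the Hilbert tangent module'', and (ii) decompose the metric derivative of an arbitrary curve $\sigma$ in $\X$ into a parallel part $\tfrac{\d}{\d r}\b\circ\sigma$ and a transversal part and rescale. Neither operation is well-defined on a single curve in a metric measure space: the metric speed $\ms(\sigma,\cdot)$ is a scalar, a single absolutely continuous curve has no canonical velocity vector in $L^0(T\X)$, and $V^\para$, $V^\perp$ are submodules of $L^0(T\X)$, not pointwise orthogonal complements along a curve. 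The formal identity $\mathcal L_{\nabla\b}V=\nabla_V\nabla\b$ does not directly make sense either. The paper circumvents all of this by never comparing curve lengths directly. It proves the scaling estimate at the level of \emph{test plans} (Proposition \ref{prop:speedproj}, where the derivative $\ppi'_t$ lives in a pullback module and \emph{can} be split into $V^\para$ and $V^\perp$), deduces the relative-differential formula $|\d\hat g|=w_\sfd(\b)^{-1}|\d g|\circ\Pr$ (Corollary \ref{cor:reldiff}), establishes that $\R\times_w\X'$ has the Sobolev-to-Lipschitz property (Lemma \ref{le:SobtoLip}), and then concludes via \cite[Proposition~4.20]{Gigli13}: since $\sfT$ is measure preserving and both spaces are Sobolev-to-Lipschitz, it suffices to check $|\d f|_{\R\times_w\X'}=|\d(f\circ\sfT)|_\X$, which is verified first on the generating algebra $\mathcal A$ (using orthogonality of $\nabla\hat g$ and $\nabla\hat h$) and then by density (Lemma \ref{le:densaA}). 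This Sobolev-to-Lipschitz reduction is the key structural idea that is missing from your proposal; without it, the curve-by-curve argument you sketch does not close in the $\RCD$ category.
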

The proof of this result will come as a result of the analysis in the following sections, the conclusion being in Section \ref{se:concl}.

\subsection{Behaviour of the measure under the flow}
Let us define the locally Lipschitz map $\R^2\ni (t,z)\mapsto\Psi_{\mm,t}(z)\in\R$ as
\begin{equation}
\label{eq:defpsim}
\Psi_{\mm,t}(z):=\tfrac{w_\mm(z-t)}{w_\mm(z)}.
\end{equation}
Then from \eqref{eq:defwm} we get
\begin{equation}
\label{eq:ptPsi}
\partial_t\Psi_{\mm,t}(z)=-\psi_\mm(z-t) \Psi_{\mm,t}(z),\qquad\text{and}\qquad \Psi_{\mm,0}\equiv1
\end{equation}
 and
 \begin{equation}
\label{eq:pdepsim1}
\partial_t\Psi_\mm+\partial_z\Psi_\mm+\psi_\mm\Psi_\mm=0.
\end{equation}
\begin{lemma}
With the same assumptions and notation of Section \ref{se:setup} and for $\Psi_\mm$ defined as in \eqref{eq:defpsim} we have
\begin{equation}
\label{eq:pf11}
(\Fl_t)_*\mm=\Psi_{\mm,t}\circ\b  \,\mm\qquad\forall t\in\R.
\end{equation}
\end{lemma}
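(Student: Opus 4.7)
The plan is to reduce the identity to the uniqueness of solutions of the linear continuity equation driven by the vector field $\nabla\b$: I will show that both the left-hand side $\mu_t:=(\Fl_t)_*\mm$ and the right-hand side $\tilde\mu_t:=\Psi_{\mm,t}\circ\b\,\mm$ are absolutely continuous measures with respect to $\mm$ (with density in $L^\infty$ on any finite time interval), coincide at $t=0$, and weakly satisfy the same continuity equation
\[
\partial_t\mu_t+\div(\nabla\b\,\mu_t)=0.
\]
The claim then follows from uniqueness, which in turn is provided by the theory developed in \cite{Ambrosio-Trevisan14} (recall that $\nabla\b\in L^\infty$ with $\div(\nabla\b)=\psi_\mm\circ\b\in L^\infty_{loc}$ and $\nabla\b\in W^{1,2}_{C,loc}$ by the assumptions \eqref{eq:b1}, \eqref{eq:b2}, \eqref{eq:b3} on $\b$, so the assumptions for the uniqueness of the RLF are met).

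To carry this out, I would fix an arbitrary $f\in \Test_{loc}(\X)$ with bounded support and put
\[
A(t):=\int f\,\d\mu_t=\int f\circ\Fl_t\,\d\mm,\qquad B(t):=\int f\,\d\tilde\mu_t=\int f\,\Psi_{\mm,t}\circ\b\,\d\mm.
\]
Both are clearly continuous in $t$, and $A(0)=B(0)=\int f\,\d\mm$ by $\Psi_{\mm,0}\equiv 1$. For $A'$ I would invoke the defining property \eqref{RLFder} of the Regular Lagrangian Flow together with the bounded compression \eqref{eq:boundcompr} to differentiate under the integral and obtain $A'(t)=\int\la\nabla f,\nabla\b\ra\,\d\mu_t$. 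For $B'$, \eqref{eq:ptPsi} allows differentiation under the integral (the integrand being dominated by $\|f\|_\infty\cdot\|\psi_\mm\Psi_{\mm,t}\|_{L^\infty(\b^{-1}(K))}$ on $\mathrm{supp}(f)$), and then using \eqref{eq:pdepsim1} I rewrite $\partial_t\Psi_\mm=-\partial_z\Psi_\mm-\psi_\mm\Psi_\mm$. The chain rule for the differential gives $\la\nabla(\Psi_{\mm,t}\circ\b),\nabla\b\ra=(\partial_z\Psi_\mm)\circ\b\cdot|\nabla\b|^2=(\partial_z\Psi_\mm)\circ\b$, so integration by parts against $f$ (using $\Delta\b=\psi_\mm\circ\b$) yields
\[
\int\la\nabla f,\nabla\b\ra\,\Psi_{\mm,t}\circ\b\,\d\mm=-\int f\bigl[(\partial_z\Psi_\mm)\circ\b+\psi_\mm\Psi_\mm\circ\b\bigr]\,\d\mm=\int f\,(\partial_t\Psi_\mm)\circ\b\,\d\mm=B'(t).
\]
Thus $A'(t)=\int\la\nabla f,\nabla\b\ra\,\d\mu_t$ and $B'(t)=\int\la\nabla f,\nabla\b\ra\,\d\tilde\mu_t$, which is exactly the weak continuity equation driven by $\nabla\b$ for both $\mu_t$ and $\tilde\mu_t$.

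At this point I would conclude: both families $(\mu_t)$ and $(\tilde\mu_t)$ lie in the class of $\mm$-absolutely continuous measures with densities uniformly bounded on compact $t$-intervals (for $\tilde\mu_t$ by continuity of $\Psi_{\mm,t}$ on the support of $f$, for $\mu_t$ by \eqref{eq:boundcompr}), they agree at $t=0$, and they both solve the same linear continuity equation. The uniqueness for such solutions, equivalent to the uniqueness of the RLF of $\nabla\b$ and available under our regularity assumptions on $\b$, forces $\mu_t=\tilde\mu_t$ for every $t\in\R$, which is \eqref{eq:pf11}. The main obstacle—and the only nontrivial point—is this uniqueness step: one must confirm that $\nabla\b$ falls within the scope of the Ambrosio--Trevisan uniqueness theorem, which ultimately relies on the bounds on $\Delta\b$ and $\Hess\b$ coming from \eqref{eq:b2} and \eqref{eq:b3}.
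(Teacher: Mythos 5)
Your overall strategy — showing that both $t\mapsto(\Fl_t)_*\mm$ and $t\mapsto\Psi_{\mm,t}\circ\b\,\mm$ solve the linear continuity equation driven by $\nabla\b$ and then invoking uniqueness — is exactly the paper's idea, and the computation you present for the two derivatives is essentially the same calculation. However, there is a genuine gap in the step where you invoke uniqueness.

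The problem is that you apply the Ambrosio--Trevisan uniqueness theorem directly to the curves $\mu_t=(\Fl_t)_*\mm$ and $\tilde\mu_t=\Psi_{\mm,t}\circ\b\,\mm$. The uniqueness result for the continuity equation on metric measure spaces is formulated for densities in $L^1\cap L^\infty(\mm)$ (or for curves with uniformly bounded mass); but here $\mm$ is in general an infinite Radon measure, so neither density is in $L^1(\mm)$. Moreover, your justification that $\tilde\mu_t$ has bounded density — ``by continuity of $\Psi_{\mm,t}$ on the support of $f$'' — conflates a local statement with the global one actually needed: the density of $\tilde\mu_t$ is $\Psi_{\mm,t}\circ\b$ on all of $\X$, and since $\psi_\mm$ is only assumed \emph{locally} Lipschitz, $\Psi_{\mm,t}$ is only locally bounded. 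So you have not verified that $\tilde\mu_t$ lies in any class where uniqueness is available. (That $\Psi_{\mm,t}\circ\b$ is globally bounded is a \emph{consequence} of the lemma together with \eqref{eq:boundcompr}, not an a priori fact you may use.)

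The paper's proof avoids both issues by localizing at the outset. It suffices — exploiting the finite speed of propagation of the flow, which follows from $|\nabla\b|\equiv1$ and \eqref{eq:speedrlf} — to prove the identity after multiplying by a Lipschitz probability density $\rho$ with bounded support: one shows that $\rho_t:=(\rho\circ\Fl_{-t})(\Psi_{\mm,t}\circ\b)$ solves the continuity equation with datum $\rho$, and since $\rho\circ\Fl_{-t}$ has bounded support (within distance $|t|$ of $\supp\rho$) and $\Psi_{\mm,t}\circ\b$ is bounded on bounded sets, $\rho_t$ is a bounded, compactly supported density, so one stays within the class where uniqueness applies. To repair your argument, you should replace the ambient-measure comparison with this localized one (or supply some equivalent truncation) before appealing to uniqueness.
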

\begin{proof}
Recalling the simple implication "$T_*\mu=\nu$ implies $T_*(\rho\mu)=\rho\circ T^{-1}\nu$",  and by the finite speed of propagation of the flow, to conclude it is sufficient to prove that for any Lipschitz probability density $\rho$ with bounded support we have  $(\Fl_t)_*(\rho\mm)=\rho_t\mm$, where $\rho_t=(\rho\circ\Fl_{-t})\,(\Psi_{\mm,t}\circ\b)$. 

By the uniqueness result for the continuity equation (that is central for the theory of Regular Lagrangian Flows, see \cite{Ambrosio-Trevisan14}), this latter claim will follow if we prove that $(\rho_t)$ solves 
\[
\partial_t\rho_t+\div(\rho_t \nabla\b)=0\qquad a.e.\ t.
\]
Notice that  $(t,x)\mapsto \rho_t(x)$ is Lipschitz, thus the computations that we are going to perform are justified. Now observe that  letting $h\to0$ in $\frac{\rho\circ\Fl_{-t}\circ\Fl_{-h}-\rho\circ\Fl_{-t}}{h}$ we see that $\partial_t(\rho\circ\Fl_{-t})=-\la\nabla(\rho\circ\Fl_{-t}),\nabla\b\ra$, thus 
\[
\partial_t\rho_t=-\la\nabla(\rho\circ\Fl_{-t}),\nabla\b\ra\,\,\Psi_{\mm,t}\circ\b+(\rho\circ\Fl_{-t})(\partial_t\Psi_{\mm,t})\circ\b.
\]
On the other hand
\[
\begin{split}
\div(\rho_t \nabla\b)&=\la\nabla\rho_t,\nabla\b\ra+\rho_t\Delta \b\\
&=\la\nabla(\rho\circ\Fl_{-t}),\nabla \b\ra \,\Psi_{\mm,t}\circ\b+(\rho\circ\Fl_{-t})(\partial_z \Psi_{\mm,t})\circ\b\,|\d\b|^2+\rho_t\Delta \b,
\end{split}
\]
thus recalling \eqref{eq:b} and adding up we conclude that
\[
\begin{split}
\partial_t\rho_t+\div(\rho_t \nabla\b)&=(\rho\circ\Fl_{-t})\big(\partial_t\Psi_{\mm,t}+\partial_z \Psi_{\mm,t}+\psi_{\mm}\Psi_{\mm,t} \big)\circ\b\stackrel{\eqref{eq:pdepsim1}}=0,
\end{split}
\]
as desired.
\end{proof}
For $B\subset\X'$ Borel we define $\hat B\subset\X$ as
\begin{equation}
\label{eq:defhatB}
\hat B:=\Fl_\cdot^{-1}(B)=\cup_t\Fl_t^{-1}(B)=\Fl_t(B).
\end{equation}
Clearly $\hat B$ is Borel. Then we have the following:
\begin{lemma}\label{le:muac}
With the same assumptions and notation of Section \ref{se:setup} (recall in particular the definition \eqref{eq:defwm} and the normalization  \eqref{eq:normw}) the following holds.

Let $B\subset\X'$ be Borel and put $\mu:=\b_*(\mm\restr{\hat B})$. Then
\begin{equation}
\label{eq:trmu1}
({\rm tr}_t)_*\mu=\Psi_{\mm,t}\,\mu,\qquad\forall t\in\R,
\end{equation}
 where  ${\rm tr}_t:\R\to\R$  is the translation map sending   $z$ to $z+t$ and moreover
\begin{equation}
\label{eq:muwm}
\mu=\mm'(B)\, w_\mm\,\mathcal L^1.
\end{equation}
\end{lemma}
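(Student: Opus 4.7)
The plan is to derive \eqref{eq:trmu1} as a direct consequence of the pushforward identity \eqref{eq:pf11} combined with the translation identity \eqref{eq:bflusso} and the flow-invariance of $\hat B$, and then to bootstrap \eqref{eq:muwm} from \eqref{eq:trmu1} by a change of density trick that makes the resulting measure translation-invariant.

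For \eqref{eq:trmu1}: observe that $\hat B$ is invariant under $\Fl_s$ for every $s$ (clear from the last equality in \eqref{eq:defhatB}), and that \eqref{eq:bflusso} yields $\b^{-1}(A-t)=\Fl_{-t}(\b^{-1}(A))$ for any Borel $A\subset\R$. Hence
\[
(\mathrm{tr}_t)_*\mu(A)=\mu(A-t)=\mm\big(\hat B\cap\b^{-1}(A-t)\big)=\mm\big(\Fl_{-t}(\hat B\cap\b^{-1}(A))\big).
\]
Since $\Fl_t$ is a Borel bijection with inverse $\Fl_{-t}$, the quantity $\mm(\Fl_{-t}(F))$ coincides with $(\Fl_t)_*\mm(F)$, so \eqref{eq:pf11} applied to $F:=\hat B\cap \b^{-1}(A)$ gives $\mm(\Fl_{-t}(F))=\int_F \Psi_{\mm,t}\circ\b\,\d\mm=\int_A\Psi_{\mm,t}\,\d\mu$, which is exactly \eqref{eq:trmu1}.

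For \eqref{eq:muwm}: introduce the Radon measure $\tilde\mu:=w_\mm^{-1}\mu$ on $\R$. Using \eqref{eq:trmu1} and the explicit form \eqref{eq:defpsim} of $\Psi_{\mm,t}$, for every bounded Borel $A$ one computes
\[
(\mathrm{tr}_t)_*\tilde\mu(A)=\int \mathbf 1_A(z+t)w_\mm^{-1}(z)\,\d\mu(z)=\int_A w_\mm^{-1}(z-t)\Psi_{\mm,t}(z)\,\d\mu(z)=\int_A w_\mm^{-1}(z)\,\d\mu(z)=\tilde\mu(A),
\]
so $\tilde\mu$ is translation-invariant. Hence $\tilde\mu=C\mathcal L^1$ for some $C\geq 0$, i.e.\ $\mu=C\,w_\mm\,\mathcal L^1$. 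To identify $C=\mm'(B)$, it suffices to check that $\hat B=\Pr^{-1}(B)$: if $x=\Fl_t(y)$ with $y\in B\subset\b^{-1}(0)$ then \eqref{eq:bflusso} forces $t=\b(x)$ and $\Pr(x)=\Fl_{-\b(x)}(x)=y\in B$, while conversely $y:=\Pr(x)\in B$ gives $x=\Fl_{\b(x)}(y)\in\hat B$. Therefore
\[
\mu([0,1])=\mm\big(\hat B\cap\b^{-1}([0,1])\big)=\Pr_*\big(\mm\restr{\b^{-1}([0,1])}\big)(B)=c\,\mm'(B),
\]
and comparing with $\mu([0,1])=C\int_0^1 w_\mm\,\d\mathcal L^1=Cc$ yields $C=\mm'(B)$, which is \eqref{eq:muwm}.

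The only mildly delicate point is the flow-invariance of $\hat B$ and the identity $\hat B=\Pr^{-1}(B)$, which rely on the bijectivity of $\Fl_t$ and on the pointwise relation \eqref{eq:bflusso} (valid everywhere thanks to the continuous representative of $\Fl$). Everything else is routine manipulation of push-forwards, so no substantive obstacle is expected.
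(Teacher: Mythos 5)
Your derivation of \eqref{eq:trmu1} is correct and is essentially the same as the paper's, just with slightly different push-forward bookkeeping. For \eqref{eq:muwm} you take a genuinely different and, I think, cleaner route: you normalize $\mu$ by $w_\mm^{-1}$, observe from \eqref{eq:trmu1} and the cocycle identity $w_\mm^{-1}(z-t)\Psi_{\mm,t}(z)=w_\mm^{-1}(z)$ that $\tilde\mu:=w_\mm^{-1}\mu$ is translation-invariant, and invoke uniqueness of the Haar measure on $\R$. The paper instead first averages \eqref{eq:trmu1} over $t\in[0,1]$ to deduce $\mu\ll\mathcal L^1$, writes $\mu=\rho\mathcal L^1$, reads the resulting functional equation $\rho(z-t)=\Psi_{\mm,t}(z)\rho(z)$ infinitesimally (via $\partial_t\Psi_{\mm,t}\restr{t=0}=-\psi_\mm$) to get the ODE $\rho'=\psi_\mm\rho$, and concludes $\rho=c\,w_\mm$. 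Your argument replaces the averaging/ODE step by an appeal to the rigidity of translation-invariant measures; the paper's argument is more elementary in that it only needs the Radon--Nikodym theorem and a distributional ODE. Both buy the same conclusion, and your identification of the constant via $\hat B=\Pr^{-1}(B)$ and $\mu([0,1])=c\,\mm'(B)$ matches the paper's computation.

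One small gap you should close: the paper explicitly sets aside the case $\mm'(B)=\infty$ at the outset and then assumes $\mm'(B)<\infty$; your argument implicitly needs this too. The statement ``translation-invariant $\Rightarrow$ multiple of $\mathcal L^1$'' requires $\tilde\mu$ to be a Radon (or at least $\sigma$-finite) measure, which is not automatic; a counting-type measure is translation-invariant without being a multiple of Lebesgue. So you should first note that when $\mm'(B)<\infty$, the identity $\hat B=\Pr^{-1}(B)$ gives $\mu([0,1])=c\,\mm'(B)<\infty$, and then $\mu([k,k+1])=\int_{[0,1]}\Psi_{\mm,-k}\,\d\mu<\infty$ for every $k$ by \eqref{eq:trmu1} and local boundedness of $\Psi_{\mm,-k}$, so $\mu$ (hence $\tilde\mu$) is Radon and Haar uniqueness applies; then dispatch the case $\mm'(B)=\infty$ separately (as the paper does in one line).
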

\begin{proof} If $\mm'(B)=\infty$ the conclusion follows from the definition of $\mm'$ and \eqref{eq:pf11}, thus we assume $\mm'(B)<\infty$.  By \eqref{eq:pf11} and the invariance of $\hat B$ under the flow it follows immediately that   $(\Fl_t)_*(\mm\restr{\hat B})=\Psi_{\mm,t}\circ\b\,\mm\restr{\hat B}$.  Then \eqref{eq:trmu1} follows from
\[
({\rm tr}_t)_*\mu=({\rm tr}_t)_*\b_*(\mm\restr{\hat B})\stackrel{\eqref{eq:bflusso}}=\b_*(\Fl_t)_*(\mm\restr{\hat B})=\b_*(\Psi_{\mm,t}\circ\b\,\mm\restr{\hat B})\stackrel{*}=\Psi_{\mm,t}\,\b_*(\mm\restr{\hat B})=\Psi_{\mm,t}\mu,
\]
where  the starred equality is justified by $\int \varphi \,\d\b_*(\Psi_{\mm,t}\circ\b\,\mm\restr{\hat B})=\int (\varphi\Psi_{\mm,t})\circ\b\,\d \mm\restr{\hat B}=\int \varphi\Psi_{\mm,t}\,\d\b_*(\mm\restr{\hat B})$.

Averaging \eqref{eq:trmu1} in $t$ we see that the left hand side becomes absolutely continuous w.r.t.\ $\mathcal L^1$, thus showing that $\mu\ll\mathcal L^1$, say $\mu=\rho\mathcal L^1$. Then \eqref{eq:trmu1} becomes: for any $t\in\R$ we have  $\rho(z-t)=\Psi_{\mm,t}(z)\rho(z)$ for $\mathcal L^1$-a.e.\ $z$. From this identity and $\partial_t\Psi_{\mm,t}\restr{t=0}=-\psi_\mm$ (that comes from \eqref{eq:ptPsi})  it easily follows that the distributional derivative $\rho'$ of $\rho$ satisfies  $\rho'=\psi_\mm\rho $, thus ensuring that $\rho$ is a multiple of $w_\mm$, say $\rho=cw_\mm$.

To find the value of $c$ notice that    $\b_*(\mm\restr{\hat B})([0,1])=\mm(\hat B\cap \b^{-1}[0,1]))=\mm'(B)\int_0^1w_\mm\,\d\mathcal L^1$ by definition of $\mm'$, and also that  $\b_*(\mm\restr{\hat B})([0,1])=\mu([0,1])=\int_0^1\rho\,\d\mathcal L^1=c\int_0^1w_\mm\,\d\mathcal L^1$.
\end{proof}
Let now $B\subset\X'$ be Borel as before and with $\mm'(B)<\infty$. Define $\hat B$ as in \eqref{eq:defhatB} and notice that since  $\mu:=\b_*(\mm\restr{\hat B})$ is $\sigma$-finite (in fact Radon), we can disintegrate $\mm\restr{\hat B}$ along the map $\b$, as the standard proof (see e.g.\ \cite[Section 10.6]{Bogachev07}) naturally carries over. Thus we obtain a weakly measurable family $z\mapsto \nu_z$ of probability measures on $\X$ such that $\nu_z$ is concentrated on $\b^{-1}(z)$ for $\mu$-a.e.\ $z$ (equivalently by Lemma \ref{le:muac} above: for $\mathcal L^1$-a.e.\ $z$) and so that
\begin{equation}
\label{eq:dis1}
\int\varphi\,\d\mm\restr{\hat B}=\iint\varphi\,\d\nu_z\,\d \mu(z)\stackrel{\eqref{eq:muwm}}=\mm'(B) \int \Big(\int\varphi\,\d\nu_z\Big)w_\mm(z)\,\d\mathcal L^1(z)
\end{equation}
holds for every $\varphi:\X\to\R^+$ Borel. Then we have:
\begin{lemma}\label{le:dism}
With the same assumptions and notation of Section \ref{se:setup}  the following holds.

Let  $B\subset\X'$ be Borel with $\mm'(B)<\infty$ and $\hat B$ as in \eqref{eq:defhatB}. Then the disintegration $\{\nu_z\}_{z\in\R}$ of $\mm\restr{\hat B}$ w.r.t.\ $\b$ satisfies: for any $t\in\R$ it holds
\begin{equation}
\label{eq:samedis}
(\Fl_t)_*\nu_z=\nu_{z+t}\qquad\mu-a.e.\ z,
\end{equation}
where $\mu:=\b_*(\mm\restr{\hat B})$.
\end{lemma}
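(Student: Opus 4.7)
The plan is to exploit the uniqueness of the disintegration, by computing the pushforward measure $(\Fl_t)_*(\mm\restr{\hat B})$ in two different ways and then equating the resulting conditionals.

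\medskip

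First I would note that since $\hat B$ is invariant under the flow (being a union of flow orbits), a direct restriction of \eqref{eq:pf11} gives
\[
(\Fl_t)_*(\mm\restr{\hat B})=\Psi_{\mm,t}\circ\b\,\,\mm\restr{\hat B}.
\]
Multiplying the disintegration \eqref{eq:dis1} by $\Psi_{\mm,t}\circ\b$ and pulling this factor outside the inner integral (using that $\nu_z$ is concentrated on $\b^{-1}(z)$) yields a first disintegration of $(\Fl_t)_*(\mm\restr{\hat B})$ along $\b$:
\[
(\Fl_t)_*(\mm\restr{\hat B})=\int \nu_z\,\d\big(\Psi_{\mm,t}\mu\big)(z),
\]
whose conditional at level $z$ is simply $\nu_z$, and whose base measure is $\Psi_{\mm,t}\mu$.

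\medskip

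Second, I would pushforward the disintegration \eqref{eq:dis1} directly through $\Fl_t$: testing against $\varphi$,
\[
\int \varphi\,\d(\Fl_t)_*(\mm\restr{\hat B})=\int\Big(\int \varphi\circ\Fl_t\,\d\nu_z\Big)\,\d\mu(z)=\int\Big(\int\varphi\,\d(\Fl_t)_*\nu_z\Big)\,\d\mu(z).
\]
Since \eqref{eq:bflusso} forces $(\Fl_t)_*\nu_z$ to be concentrated on $\b^{-1}(z+t)$, performing the change of variable $w=z+t$ and recalling \eqref{eq:trmu1} (i.e.\ $({\rm tr}_t)_*\mu=\Psi_{\mm,t}\mu$) gives a second disintegration along $\b$:
\[
(\Fl_t)_*(\mm\restr{\hat B})=\int (\Fl_t)_*\nu_{w-t}\,\d\big(\Psi_{\mm,t}\mu\big)(w),
\]
with conditional at $w$ equal to $(\Fl_t)_*\nu_{w-t}$ and the same base measure $\Psi_{\mm,t}\mu$.

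\medskip

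Uniqueness of disintegration (applicable since $\Psi_{\mm,t}\mu$ is $\sigma$-finite, being equivalent to $\mu=\mm'(B)w_\mm\mathcal L^1$ via the strictly positive factor $\Psi_{\mm,t}$) then forces $\nu_w=(\Fl_t)_*\nu_{w-t}$ for $\mu$-a.e.\ $w$, which upon relabeling $z=w-t$ is exactly \eqref{eq:samedis}. I do not expect any serious obstacle here: the only mildly delicate point is making sure that the two disintegrations are indeed with respect to the same base measure (so that uniqueness may be invoked), and this is precisely the content of \eqref{eq:trmu1} proved in the previous lemma.
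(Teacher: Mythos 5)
Your proposal is correct and takes essentially the same approach as the paper: both arguments compute the pushforward $(\Fl_t)_*(\mm\restr{\hat B})$ in two ways, using \eqref{eq:pf11} on one side and \eqref{eq:bflusso} together with \eqref{eq:trmu1} on the other, and then conclude by uniqueness of disintegration. The only difference is presentational -- the paper spells out the uniqueness step by testing against products $h\circ\b\cdot g\circ\Pr$ and invoking a density argument based on \eqref{eq:T}, whereas you invoke uniqueness of disintegration directly -- but this is the same argument.
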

\begin{proof} Fix $t\in\R$. 
We shall prove that the family $\{(\Fl_t)_*\nu_{z-t}\}$ is an admissible disintegration of $\mm\restr{\hat B}$ w.r.t.\ $\b$: this is (equivalent to) the claim. For $\mu$-a.e.\ $z$ we know that   $\nu_z$ is concentrated on $\b^{-1}(z)$, thus $\nu_{z-t}$ is concentrated on  $\b^{-1}(z-t)$ and therefore, by \eqref{eq:bflusso},  $(\Fl_t)_*\nu_{z-t}$ is concentrated on $\b^{-1}(z)$. To conclude,  with  a density argument based also on \eqref{eq:T}  it is therefore sufficient to prove that for any Borel functions $g:\X'\to\R^+$ and $h:\R\to\R^+$ we have
\begin{equation}
\label{eq:disintvariante}
\int h\circ \b\,g\circ\Pr\,\d\mm\restr{\hat B}=\int\int h\circ \b\,g\circ\Pr\,\d(\Fl_t)_*\nu_{z-t}\,\d\mu(z).
\end{equation}
%
%
%
We start claiming that it holds
\begin{equation}
\label{eq:perpf}
\int  h(z)\Psi_{\mm,t}(z)\Big(\int g\circ\Pr\,\d\nu_{z-t}\Big)\,\d\mu(z)=\int  h(z)\Psi_{\mm,t}(z)\Big(\int g\circ\Pr\,\d\nu_{z}\Big)\,\d\mu(z).
\end{equation}
Since $\Pr\circ\Fl_t=\Pr$, we have on one hand
\[
\begin{split}
\int_{\hat B} h\circ \b\, g\circ\Pr\,\d(\Fl_t)_*\mm&=\int_{\hat B}(\Psi_{\mm,t}h)\circ \b\, g\circ\Pr\,\d \mm=\int \Psi_{\mm,t}(z)h(z)\Big(\int g\circ\Pr\,\d\nu_z \Big)\,\d\mu(z)
\end{split}
\]
and on the other
\[
\begin{split}
\int_{\hat B} h\circ \b\,g\circ\Pr\,\d(\Fl_t)_*\mm&=\int_{\hat B}h\circ\b\circ\Fl_t\,g\circ\Pr\,\d\mm\\
\text{(by \eqref{eq:bflusso})}\qquad&=\int h(z+t)\Big(\int g\circ\Pr\,\d\nu_z\Big)\,\d\mu(z)\\
&=\int h(z)\Big(\int g\circ\Pr\,\d\nu_{z-t}\Big)\,\d({\rm tr}_t)_*\mu(z)\\
\text{(by \eqref{eq:trmu1})}\qquad&=\int\Psi_{\mm,t}(z)h(z)\Big(\int g\circ\Pr\,\d\nu_{z-t}\Big)\,\d \mu(z),
\end{split}
\]
thus proving our claim \eqref{eq:perpf}. Now notice that replacing $h$ with $h\Psi_{\mm,t}$ (recall that $\Psi_{\mm,t}>0$ everywhere) and using again that $\Pr\circ\Fl_t=\Pr$, from \eqref{eq:perpf} we get
\[
\int  h(z)\Big(\int g\circ\Pr\,\d((\Fl_t)_*\nu_{z-t})\Big)\,\d\mu(z)=\int  h(z) \Big(\int g\circ\Pr\,\d\nu_{z}\Big)\,\d\mu(z).
\]
To conclude we use again the fact that $(\Fl_t)_*\nu_{z-t}$ and $\nu_z$ are both concentrated on $\b^{-1}(z)$ to deduce from the above that
\[
\iint h\circ\b\, g\circ\Pr\,\d((\Fl_t)_*\nu_{z-t})\,\d\mu(z)=\iint h\circ\b\, g\circ\Pr\,\d\nu_{z}\,\d\mu(z)=\int h\circ\b\, g\circ\Pr\,\d\mm\restr{\hat B},
\]
that is the desired \eqref{eq:disintvariante}.
\end{proof}
\begin{prop}\label{prop:Tmeaspres}
With the same assumptions and notation of Section \ref{se:setup}  the following holds.

The map $\sfT:\X\to\X'\times_w\R$  (recall \eqref{eq:defT}) is measure preserving. In other words and recalling \eqref{eq:invT}, for any $\varphi:\X\to\R^+$ Borel  we have
\[
\int \varphi\,\d\mm=\int\Big(\int\varphi\,\d(\Fl_t)_*\mm'\Big)w_\mm(t)\,\d t
\]
\end{prop}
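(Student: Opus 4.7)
The plan is to reduce, via $\sigma$-finiteness of $\mm'$, to the case $\varphi$ supported in a flow-invariant tube $\hat B$ with $\mm'(B)<\infty$, then combine the disintegration of Lemma \ref{le:dism} with an identification of the fiber measure $\nu_0$ as the normalized restriction of $\mm'$ to $B$. Concretely, since $\mm'$ is Radon on $\X'$, one may partition $\X'=\bigsqcup_n B_n$ with $\mm'(B_n)<\infty$, get a Borel partition $\X=\bigsqcup_n\hat B_n$, and by monotone convergence reduce to $\varphi$ supported in a single $\hat B$ with $\mm'(B)<\infty$.

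For such $\varphi$, by \eqref{eq:dis1}
\[
\int\varphi\,\d\mm=\mm'(B)\int\Big(\int\varphi\,\d\nu_z\Big)w_\mm(z)\,\d\mathcal L^1(z),
\]
and Lemma \ref{le:dism} gives $\nu_z=(\Fl_z)_*\nu_0$ for $\mathcal L^1$-a.e.\ $z$. Substituting and using the change-of-variables formula,
\[
\int\varphi\,\d\mm=\int\Big(\int\varphi\circ\Fl_z\,\d\big(\mm'(B)\,\nu_0\big)\Big)w_\mm(z)\,\d z.
\]

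The crucial step is the identification $\mm'(B)\,\nu_0=\mm'\restr B$. To see this, note that $1_{\hat B}=1_B\circ\Pr$ (since $x\in\hat B$ iff $\Pr(x)\in B$) and $\Pr\circ\Fl_z=\Pr$, so for Borel $f:\X'\to\R^+$, on the one hand the definition \eqref{eq:defmp} yields
\[
\int_{\hat B\cap\b^{-1}([0,1])}f\circ\Pr\,\d\mm=c\int_B f\,\d\mm',
\]
while applying the disintegration to $\varphi=f\circ\Pr\cdot 1_{\b^{-1}([0,1])}$, together with $\nu_z=(\Fl_z)_*\nu_0$ and $\nu_0$ concentrated on $\b^{-1}(0)\cap\hat B=B$ where $\Pr$ is the identity, gives
\[
\int_{\hat B\cap\b^{-1}([0,1])}f\circ\Pr\,\d\mm=\mm'(B)\,c\int_B f\,\d\nu_0.
\]
Comparing produces $\mm'\restr B=\mm'(B)\,\nu_0$.

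Plugging this identification back, and observing that since $\hat B$ is flow-invariant the function $\varphi\circ\Fl_z$ vanishes outside $\hat B$ and in particular outside $B$ when restricted to $\X'$, one gets
\[
\int\varphi\,\d\mm=\int\Big(\int\varphi\circ\Fl_z\,\d\mm'\Big)w_\mm(z)\,\d z=\int\Big(\int\varphi\,\d(\Fl_z)_*\mm'\Big)w_\mm(z)\,\d z,
\]
which is the claim. The main obstacle is really only the clean identification of $\nu_0$ with the normalized $\mm'\restr B$: everything else is bookkeeping with disintegration and the measure-theoretic identity $1_{\hat B}=1_B\circ\Pr$.
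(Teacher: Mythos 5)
Your overall strategy is essentially the same as the paper's: restrict to a flow-invariant tube $\hat B$ over a finite-measure $B\subset\X'$, use the disintegration from Lemma \ref{le:dism}, and identify the fiber over $z=0$ with (a multiple of) $\mm'\restr B$. Your reduction to finite $\mm'(B)$, the identity $1_{\hat B}=1_B\circ\Pr$, and the comparison of the two expressions for $\int_{\hat B\cap\b^{-1}([0,1])}f\circ\Pr\,\d\mm$ are all sound.

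There is, however, a gap in the step ``Lemma \ref{le:dism} gives $\nu_z=(\Fl_z)_*\nu_0$ for $\mathcal L^1$-a.e.\ $z$.'' Lemma \ref{le:dism} only states that for each \emph{fixed} $t$, $(\Fl_t)_*\nu_z=\nu_{z+t}$ for $\mu$-a.e.\ $z$. Since the disintegration $\{\nu_z\}$ is defined only up to a $\mathcal L^1$-negligible set of $z$'s, the single fiber $\nu_0$ is not a canonical object, and you cannot simply take $z=0$ in the lemma (as $\{0\}$ is null). Your subsequent identification of $\mm'(B)\nu_0$ with $\mm'\restr B$, which uses $\nu_0$ concentrated on $B$ and pushed forward by the flow, therefore rests on an object that needs to be constructed rather than read off. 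The paper resolves this precisely by a Fubini argument: for a.e.\ $z$, the relation $(\Fl_t)_*\nu_z=\nu_{z+t}$ holds for a.e.\ $t$; one fixes such a good base point $\bar z$, defines a new family $\bar\nu_z:=(\Fl_{z-\bar z})_*\nu_{\bar z}$ which agrees $\mathcal L^1$-a.e.\ with $\nu_z$ and is concentrated on the correct level sets for \emph{every} $z$, and only then sets up the comparison with $\mm'\restr B$ using $\bar\nu_0$. With this repair, the rest of your computation goes through unchanged.
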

\begin{proof} Letting $B\subset\X'$ be arbitrary Borel  with $\mm'(B)<\infty$ and letting $\varphi$ be 0 outside $\hat B$, we see that to conclude it suffices to prove that we can choose $\nu_z:=\mm'(B)^{-1} (\Fl_z)_*(\mm'\restr B)$ in the disintegration formula  \eqref{eq:dis1}.

To see this, observe that from Lemma \ref{le:dism} above and Fubini's theorem we see that for a.e.\ $z$ we have $(\Fl_t)_*\nu_z=\nu_{z+t}$ for a.e.\ $t$. Fix $\bar z$ for which this holds and for which $\nu_{\bar z}$ is concentrated on $\b^{-1}(\bar z) $ and  then define $\bar\nu_{z}:=(\Fl_{z-\bar z})_*\nu_{\bar z}$ for any $z$. Then $\bar \nu_z=\nu_z$ for a.e.\ $z$, and thus the $\bar\nu_z$'s are admissible in formula  \eqref{eq:dis1}. 

To conclude it is therefore enough to show that $\bar\nu_0=\mm'(B)^{-1} \mm'\restr B$. To see this, let $C\subset B$ be Borel, define $\hat C$ as in \eqref{eq:defhatB} and recall the definition \eqref{eq:defmp} of $\mm'$ to get
\[
\begin{split}
\mm'(C)&=\Big(\int_0^1w_\mm\,\d\mathcal L^1\Big)^{-1}\mm(\hat C\cap \b^{-1}([0,1]))\\
\text{(by \eqref{eq:dis1})}\qquad&= \Big(\int_0^1w_\mm\,\d\mathcal L^1\Big)^{-1}\mm'(B)\int_0^1\bar\nu_t(\hat C)w_\mm(t)\,\d\mathcal L^1(t)=\mm'(B)\bar\nu_0(C),
\end{split}
\]
having also used the definition of $\bar\nu_t$ in the last equality. The conclusion follows by the arbitrariness of $C\subset B$.
\end{proof}

\subsection{Behaviour of the distance under the flow}\label{se:flowdist}
We shall work under the same assumptions and notation as in Section \ref{se:setup}.

We start decomposing the tangent module $L^0(T\X)$ into the submodules $V^\parallel,V^\perp$ of vector fields that are pointwise parallel/orthogonal to $\nabla\b$. Thus we put
\[
\begin{split}
V^\para&:=\{v\in L^0(T\X)\ :\ v=f\nabla \b\text{ for some }f:\X\to\R\},\\
V^\perp&:=\{v\in L^0(T\X)\ :\ \la v,\nabla\b\ra\equiv0\}.
\end{split}
\]
Given an arbitrary vector field $v\in L^0(T\X)$ it components $v^\para,v^\perp$ in $V^\para,V^\perp$ respectively are defined as (recall that $|\d \b|\equiv 1$):
\[
v^\para:=\la v ,\nabla\b\ra\,\nabla\b\qquad\text{ and }\qquad v^\perp:=v-v^\para.
\]
To better understand the structure of $V^\para,V^\perp$, it is convenient to introduce the following two classes of functions:
\[
\begin{split}
\mathcal G&:=\{g\circ\Pr\ :\ g\in L^\infty\cap W^{1,2}(\X')\text{ with bounded support}\},\\
\mathcal H&:=\{h\circ\b \ :\ h\in L^\infty\cap W^{1,2}(\R)\text{ with bounded support}\}.
\end{split}
\]
Notice that both $\mathcal G$ and $\mathcal H$ are algebra of functions. We shall typically use letters $g,h$ for functions on $\X',\R$ respectively and $\hat g,\hat h$ for $g\circ\Pr,h\circ\b$ respectively. It is clear that $\nabla\hat g\in V^\perp$ and $\nabla\hat h\in V^\para$ and thus that
\begin{equation}
\label{eq:inclusione}
\begin{split}
V^\perp\ &\  \supset\ \ \text{sub-module of the tangent module generated by $\nabla\hat g$ with $\hat  g\in\mathcal G$},\\
V^\para\ &\ \supset\ \ \text{sub-module of the tangent module generated by $\nabla\hat h$ with $\hat h\in\mathcal H$}.
\end{split}
\end{equation}
We shall prove in a moment that these inclusion are actually identities. To see this, it is convenient to introduce the algebra $\mathcal A$ of functions on $\X$ as
\[
\mathcal A:=\{\text{algebra of functions on $\X$ of the form $\sum_{i=1}^n\hat g_i\hat h_i$,\ with $n\in\N$}\}.
\]
We then have the following result:
\begin{lemma}\label{le:densaA}
With the same assumptions and notation of Section \ref{se:setup} and with the definitions just given, the following holds.

The algebra $\mathcal A$ is densely contained in $W^{1,2}(\X)$. Similarly, the algebra of functions $f:\X'\times\R\to\R$ such that $f\circ\sfT\in\mathcal A$ is densely contained in $W^{1,2}(\X'\times_w\R)$.
\end{lemma}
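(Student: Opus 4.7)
My plan is to reduce the density statement to approximating Lipschitz functions with bounded support on $\X$, and then carry out that approximation by discretizing in the $\b$-direction. First, I would invoke the standard fact that $\Lip_{bs}(\X)$ is a dense subspace of $W^{1,2}(\X)$ on any $\RCD$ space, so it suffices to take an arbitrary $f\in\Lip_{bs}(\X)$ and produce approximants in $\mathcal A$ converging to it in $W^{1,2}(\X)$. Because $\mathcal A$ is an algebra closed under multiplication by cut-offs built from $\mathcal H$ (picking tent functions of $\b$) and cut-offs built from $\mathcal G$ (picking $g\in W^{1,2}(\X')\cap L^\infty$ of bounded support via the local Lipschitz regularity of $\Pr$), I can localize everything to a strip $\b^{-1}([-R,R])\cap \Pr^{-1}(B)$ with $B\subset\X'$ bounded.

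The key construction is the following. For $f\in\Lip_{bs}(\X)$, set $F(t,x'):=f(\Fl_t(x'))$, which is Lipschitz with bounded support on $\R\times\X'$ by the Lipschitz regularity of the flow (Bru\`e--Semola, already invoked in Section \ref{se:setup}) and of $\sfT$. Choose a mesh $\{t_k=k/n\}$ on $\R$ and the associated tent functions $\phi_k^n$ (linear splines), so that $\phi_k^n\in W^{1,2}(\R)\cap L^\infty$ with bounded support and $\sum_k \phi_k^n\equiv 1$ on the $\b$-range. Define $g_k^n(x'):=F(t_k,x')$, which is Lipschitz with bounded support on $\X'$ (hence in $L^\infty\cap W^{1,2}(\X')$), and set
\[
f_n(x):=\sum_k \phi_k^n(\b(x))\,g_k^n(\Pr(x))\in\mathcal A.
\]
Then $f_n\to f$ uniformly on $\X$ (by the Lipschitz modulus of $F$ in $t$), which in combination with the uniform bound on supports yields $L^2(\mm)$ convergence. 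For the gradient, using the orthogonal decomposition $\nabla f=(\nabla f)^\para+(\nabla f)^\perp$ and the analogous decomposition of $\nabla f_n$, the parallel component of $\nabla f_n$ is computed via $(\phi_k^n)'(\b)$ times $g_k^n\circ\Pr$ (a finite-difference quotient of $F$ in $t$), while the perpendicular component is a convex combination of $\nabla(g_k^n\circ\Pr)$. Both converge in $L^2$ to the corresponding components of $\nabla f$ by standard arguments about piecewise-linear interpolation applied fiberwise, using Proposition \ref{prop:Tmeaspres} to integrate fiberwise in a Fubini manner.

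Finally, for the second assertion about $W^{1,2}(\R\times_w\X')$, the same discretization in $t$ works verbatim once one notes that the warped distance $\sfd_w$ is locally equivalent to $\sfd_+$ and the warped measure $\mm_w$ is fiberwise $w_\mm^2\,\d\mathcal L^1\otimes\mm'$: the warping functions are continuous and bounded on compact $t$-intervals, so they only affect norms by uniform multiplicative constants on the support of the approximants. The main technical obstacle is upgrading the easy uniform (hence $L^2$) convergence to convergence of the minimal weak upper gradients; this requires the fiberwise control above plus the fact that $F(t_k,\cdot)\in W^{1,2}(\X')$ with a uniform bound coming from the Lipschitz constant of $f$ and the uniform Lipschitz constant of $\Fl_{t_k}$ on compacts, together with a careful use of the decomposition into parallel and perpendicular components to avoid mixing the two directions inside the norm.
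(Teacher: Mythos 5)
Your approach is genuinely different from the paper's. The paper does not construct approximants directly: it first notes that the density of the pulled-back algebra in $W^{1,2}(\R\times_w\X')$ is established in \cite[Section~3.3]{GH15}, and then transports this to $W^{1,2}(\X)$ by observing that $\sfT$ is measure preserving (Proposition~\ref{prop:Tmeaspres}) and locally biLipschitz, so that Sobolev norms are locally comparable; a cut-off then finishes the proof. You instead attempt a direct construction by piecewise-linear discretization of $t\mapsto f(\Fl_t(x'))$ in the $\b$-variable. The construction of $f_n=\sum_k\phi_k^n(\b)\,g_k^n(\Pr)\in\mathcal A$ and the uniform (hence $L^2$) convergence $f_n\to f$ are sound; the reduction to $\Lip_{bs}(\X)$ is standard on $\RCD$ spaces.

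The gap is in the gradient convergence, which is the whole content of the statement. For the parallel part $\sum_k(\phi_k^n)'(\b)\,g_k^n(\Pr)\nabla\b$ your finite-difference argument is fine. But for the perpendicular part $\sum_k\phi_k^n(\b)\nabla(g_k^n\circ\Pr)$, the claimed ``standard arguments about piecewise-linear interpolation applied fiberwise'' do not apply: here $g_k^n\circ\Pr = f\circ\Fl_{t_k}\circ\Pr = f\circ\Fl_{t_k-\b}$, so what you actually need is that $\nabla(f\circ\Fl_s)$ converges to $\nabla f$ (modulo the parallel direction) as $s\to 0$, uniformly enough to be integrated against $\phi_k^n(\b)$. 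This is a statement about the differential $\d\Fl_s$ of the flow acting on $L^0(T\X)$, and establishing it requires precisely the machinery (Lemma after \eqref{eq:diffdf}, Proposition~\ref{prop:speedproj}, Corollary~\ref{cor:reldiff}, or the relevant parts of \cite{GH15}) that the paper invokes \emph{after} this lemma. Arguing merely that $(\nabla f_n)^\perp$ is a convex combination of the $\nabla(g_k^n\circ\Pr)$ does not help: showing $(\nabla f_n)^\perp\to(\nabla f)^\perp$ in $L^2$ is, given the parallel-part convergence, \emph{equivalent} to the convergence $\nabla f_n\to\nabla f$ you are trying to prove, so without an independent control of $\nabla(g_k^n\circ\Pr)$ the argument is circular. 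In short, either you cite \cite{GH15} as the paper does, or you need to front-load the flow-differential analysis; as written, the key step is asserted rather than proved.
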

\begin{proof}
The second claim follows directly from \cite[Section 3.3]{GH15}. Now recall that, directly from the definition of Sobolev spaces, if $T:(\X_1,\sfd_1,\mm_1)\to(\X_2,\sfd_2,\mm_2)$ is measure preserving and biLipschitz, then $L^{-1}\|f\|_{W^{1,2}(\X_2)}\leq \|f\circ T\|_{W^{1,2}(\X_1)}\leq L\|f\|_{W^{1,2}(\X_2)}$, where $L$ is the biLipschitz constant. Thus the conclusion would follow if we knew that $\sfT$ was globally biLipschitz, as the measure preserving property comes from Proposition \ref{prop:Tmeaspres}.

In general, it might be that $\sfT$ is only locally biLipschitz, but this is sufficient to conclude, as with a cut-off argument we see that functions with bounded support are dense in $W^{1,2}$ and we can approximate functions in $W^{1,2}(\X\times_w\R)$ with bounded support with functions as in the statement with uniformly bounded support.
\end{proof}
From this density result it follows that gradients of functions in $\mathcal A$ generate the whole tangent module. Since for $f=\sum_i\hat g_i\hat h_i$ we have $\nabla f=\sum_i\hat h_i\nabla \hat g_i+\hat g_i\nabla \hat h_i$, it follows that the sub-module generated by gradients of functions in $\mathcal G$ and $\mathcal H$ is the whole $L^0(T\X)$. Hence  \eqref{eq:inclusione} improves into
\begin{equation}
\label{eq:insiemiuguali}
\begin{split}
V^\perp\ &\ =\ \ \text{sub-module of the tangent module generated by $\nabla\hat g$ with $\hat  g\in\mathcal G$},\\
V^\para\ &\ =\ \ \text{sub-module of the tangent module generated by $\nabla\hat h$ with $\hat h\in\mathcal H$}.
\end{split}
\end{equation}
In particular, we obtain:
\begin{equation}
\label{eq:charpar}
v\in L^0(T\X)\quad \text{with}\quad\la v,\nabla \hat g\ra=0\quad\forall \hat g\in\mathcal G\qquad\Rightarrow\qquad v\in V^\para
\end{equation}

Let us now pick $\varphi\in C^2(\R)$ with globally bounded second derivative, put $\tilde\b:=\varphi\circ\b$ and notice that the Regular Lagrangian Flow $(\tilde\Fl_t)$ of $\nabla\tilde\b$ is globally well defined (the uniform bound on $\varphi''$ grants, among other things, that $|\nabla\tilde\b|$ grows at most linearly, that in turn shows that $\tilde\Fl_t(x)$ does not go to infinity in finite time). We claim that for some `reparametrization map' ${\rm rep}:\R^2\to\R$ locally Lipschitz we have
\begin{equation}
\label{eq:repfl}
\tilde\Fl_t(x)=\Fl_{{\rm rep}(t,\b(x))}(x)\qquad\forall t\in\R,\ x\in\X.
\end{equation}
Since $\sfT:\X\to\X'\times\R$ is locally biLipschitz, this will follow if we show that
\begin{subequations}
\begin{align}
\label{eq:flt1}
\Pr\circ\tilde\Fl_t&=\Pr,\quad\text{on }\X, \qquad\forall t\in\R,\\
\label{eq:flt2}
\b(\tilde\Fl_t(x))&=f_t(\b(x)),\qquad\forall t\in\R,\ x\in\X,
\end{align}
\end{subequations}
for some locally Lipschitz function $f:\R^2\to\R$. 

As for the flow $(\Fl_t)$ of $\nabla\b$, we have that for any $t\in\R$ the map $\tilde\Fl_t:\X\to\X$ is Lipschitz, with a control on the Lipschitz constant locally uniformly bounded in $t$. For $\hat g\in\mathcal G$ we can compute $\partial_t(g\circ\tilde\Fl_t)=\la\nabla \hat g,\nabla\tilde\b\ra\circ\tilde\Fl_t=\varphi'\circ\b\la\nabla \hat g,\nabla \b\ra\circ\tilde\Fl_t=0$, i.e.\ $\hat g\circ\tilde\Fl_t=\hat g$ $\mm$-a.e.. Then the arbitrariness of $\hat g$ and a continuity argument give \eqref{eq:flt1}.

For \eqref{eq:flt2} start noticing that with the same continuity arguments used to deduce \eqref{eq:bflusso}, we see that for any $x\in\X$ the curve $t\mapsto \b(\tilde\Fl_t(x))$ is $C^1$ and satisfies 
\begin{equation}
\label{eq:derbfl}
\partial_t\b(\tilde\Fl_t(x))=\varphi'(\b(\tilde\Fl_t(x)))\qquad\forall t\in\R. 
\end{equation}
In other words, the function $g_t(x):=\b(\tilde\Fl_t(x))$ solves the Cauchy problem $\partial_tg_t(x)=\varphi'(g_t(x))$ with $g_0(x)=\b(x)$, hence its value at time $t$ depends only on $t$ and $\b(x)$, as in \eqref{eq:flt2}. The local Lipschitz regularity of $f$ then follows by standard ODE estimates (recall that $\varphi$ is $C^2$).

Also,  $\tilde\Fl_t:\X\to\X$ has bounded compression. Since this map is invertible with inverse $\tilde\Fl_{-t}$ that is also of bounded compression, it  admits a differential $\d\tilde\Fl_t: L^0(T\X)\to L^0(T\X)$ (recall \cite[Section 2.4]{Gigli14}) characterized by
\begin{equation}
\label{eq:diffdf}
\la \d \tilde\Fl_t(v),\d f\ra\circ\tilde\Fl_t=\la v,\d(f\circ\tilde\Fl_t)\ra\qquad\forall v\in L^0(T\X),\ f\in W^{1,2}(\X)
\end{equation}
Arguing verbatim as for \cite[Proposition 3.31]{DPG16} we obtain the following result, whose proof we omit:
\begin{lemma}
With the same assumptions and notation of Section \ref{se:setup} and with $\tilde\b$, $(\tilde\Fl_t)$ as just defined,  the following holds.

Let $v\in L^2(T\X)$ and put $v_s:=\d \tilde\Fl_s(v)$. Then the map $\R\ni s\mapsto\tfrac12|v_s|^2\circ\tilde\Fl_s\in L^1(\X)$ is $C^1$ and its derivative, intended as limit of the difference quotients both strongly in $L^1$  and pointwise a.e., is given by
\begin{equation}
\label{eq:derspeed}
\partial_s(\tfrac12|v_s|^2\circ\tilde\Fl_s)=\Hess\tilde\b(v_s,v_s)\circ\tilde\Fl_s\qquad \forall s\in\R.
\end{equation}
\end{lemma}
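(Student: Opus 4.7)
The plan is to reduce the claim to the formal Riemannian identity $\partial_s(\tilde\Fl_s^*g) = \tilde\Fl_s^*\mathcal L_{\nabla\tilde\b}g = 2\tilde\Fl_s^*\Hess\tilde\b$, which read pointwise gives exactly $\partial_s(|v_s|^2\circ\tilde\Fl_s)=2\Hess\tilde\b(v_s,v_s)\circ\tilde\Fl_s$. To do this rigorously in the $\RCD$ setting, I would proceed in two stages: first establish the formula on a dense class of ``nice'' vector fields via the duality \eqref{eq:diffdf}, then extend to general $v\in L^2(T\X)$ by approximation, using the Lipschitz regularity of the flow and bounded compression to transfer $L^2$-estimates.

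For the first stage I would focus on vector fields of the form $v=\sum_{i=1}^n \chi_i\nabla g_i$, with $\chi_i\in\Lip_{bs}(\X)$ and $g_i\in\Test_{loc}(\X)$; by Lemma \ref{le:densaA} and the characterization of the tangent module, these are $L^2(T\X)$-dense. For such $v$ I would test \eqref{eq:diffdf} against $\nabla h$ with $h\in\Test_{loc}(\X)$ of bounded support, obtaining
\[
\la v_s,\nabla h\ra\circ\tilde\Fl_s=\la v,\d(h\circ\tilde\Fl_s)\ra,
\]
and differentiate in $s$ using $\partial_s(h\circ\tilde\Fl_s)=\la\nabla h,\nabla\tilde\b\ra\circ\tilde\Fl_s$. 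After commuting $\d$ with $\partial_s$ and invoking \eqref{eq:diffdf} once more, the right-hand side becomes $\la v_s,\d\la\nabla h,\nabla\tilde\b\ra\ra\circ\tilde\Fl_s$, which the Leibniz rule for scalar products of gradients (available on $H^{2,2}_{loc}$ functions) expands into $\Hess h(v_s,\nabla\tilde\b)+\Hess\tilde\b(v_s,\nabla h)$. Performing the same computation for the bilinear quantity $\la v_s,w_s\ra\circ\tilde\Fl_s$ and exploiting the symmetry of $\Hess\tilde\b$, the $\Hess h$-type correction cancels against the transport piece once the $s$-dependence of $v_s,w_s$ is tracked consistently, giving
\[
\partial_s\la v_s,w_s\ra\circ\tilde\Fl_s = 2\Hess\tilde\b(v_s,w_s)\circ\tilde\Fl_s;
\]
the stated identity follows upon setting $w=v$.

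For the second stage I would take an $L^2(T\X)$-approximation $v^n\to v$ by vector fields of the ``nice'' form above. Thanks to the Lipschitz regularity of $\tilde\Fl_s$ (uniformly on $s\in[-T,T]$) and \eqref{eq:boundcompr}, the differentials $v^n_s\to v_s$ in $L^2(T\X)$ locally uniformly in $s$. Since $\Hess\tilde\b\in L^2_{loc}(T^{\otimes 2}\X)$ and, by \eqref{eq:b3}, is pointwise bounded on level sets of $\b$, this yields $\Hess\tilde\b(v^n_s,v^n_s)\circ\tilde\Fl_s\to\Hess\tilde\b(v_s,v_s)\circ\tilde\Fl_s$ in $L^1(\X)$ locally uniformly in $s$, and likewise for $\tfrac12|v^n_s|^2\circ\tilde\Fl_s$. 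Integrating the approximate identity in $s$ and passing to the limit produces the integrated version for general $v$, from which the $C^1$ regularity in $L^1$ and the pointwise formula are read off by continuity of the right-hand side. The pointwise a.e.\ convergence of difference quotients would then be extracted by a Lebesgue differentiation argument along trajectories, using again the Lipschitz regularity of $\tilde\Fl_s$.

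The main obstacle is the commutation $\partial_s\d = \d\partial_s$ in the first stage: this is not automatic, and requires the second-order differentiation formula of \cite{GigTam18} cited in Remark \ref{re:BS}, which guarantees that $s\mapsto h\circ\tilde\Fl_s$ is $C^1$ as a curve in $W^{1,2}_{loc}(\X)$ (not merely pointwise a.e.\ or in $L^2$) with derivative $\la\nabla h,\nabla\tilde\b\ra\circ\tilde\Fl_s$. This is precisely the tool that lets one replace the classical manipulation of $\mathcal L_{\nabla\tilde\b}$ on smooth tensors by a genuine Sobolev-calculus computation, and is what makes the \cite[Proposition 3.31]{DPG16} argument go through verbatim here.
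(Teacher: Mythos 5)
Your proposal reproduces the correct strategy, which is the one of \cite[Proposition 3.31]{DPG16} that the paper explicitly cites (and whose proof it omits): work first with a dense class of ``good'' vector fields, differentiate the defining duality \eqref{eq:diffdf} in $s$ using the fact that $\partial_s(h\circ\tilde\Fl_s)=\langle\nabla h,\nabla\tilde\b\rangle\circ\tilde\Fl_s$, commute $\d$ and $\partial_s$ by some regularity-of-the-flow input, and finally approximate. The identification of the formal Riemannian identity $\partial_s(\tilde\Fl_s^*g)=2\,\tilde\Fl_s^*\Hess\tilde\b$ as the statement being made rigorous is also the right way to think about the lemma, and your appeal to the second-order differentiation formula of \cite{GigTam18} is a sensible non-compact upgrade of the machinery.

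The one point where your outline is genuinely incomplete is the passage from the quantity $\langle v_s,\nabla h\rangle\circ\tilde\Fl_s$ (where the duality \eqref{eq:diffdf} gives you a handle, because the second factor is an exact form fixed in $s$) to the symmetric quantity $\langle v_s,w_s\rangle\circ\tilde\Fl_s$ (where \emph{both} arguments are transported and neither need be a gradient). You invoke a ``cancellation of the $\Hess h$-type correction against the transport piece'' — which is indeed what happens classically, where the $\nabla_{\nabla\tilde\b}$ terms cancel — but \eqref{eq:diffdf} alone does not let you pair $v_s$ against $w_s$: it only pairs against $\d f$ for fixed $f$, and $w_s=\d\tilde\Fl_s(w)$ is not exact. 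Making this step rigorous requires first establishing that $s\mapsto v_s$ (say, after composition with $\tilde\Fl_s$) is differentiable in $L^2(T\X)$ on the dense class, identifying the derivative with covariant-derivative expressions in $\nabla\tilde\b$, and only then observing the cancellation. This is precisely the extra work carried out in \cite[Proposition 3.31]{DPG16}, so your plan is consistent with the cited argument, but as written the cancellation is asserted rather than derived, and it is the only nontrivial content of the lemma. Everything else — density of $\sum_i\chi_i\nabla g_i$ in $L^2(T\X)$, the approximation using the uniform-in-$s$ Lipschitz bound and bounded compression of $\tilde\Fl_s$, the local boundedness of $\Hess\tilde\b$ on level sets of $\b$ via \eqref{eq:hesstildeb} — is correct and matches the intended proof.
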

The special structure of the Hessian of $\tilde \b$ allows for a more explicit computation of the above. Notice that the chain rule \cite[Proposition 3.3.21]{Gigli14} grants that $\tilde\b\in H^{2,2}_{loc}(\X)$ with
\begin{equation}
\label{eq:hesstildeb}
\Hess\tilde \b=(\psi_\sfd\varphi')\circ\b\,(\Id-\e_1\otimes\e_1)+\varphi''\circ\b\,\e_1\otimes\e_1
\end{equation}
Also, let us notice that
\begin{subequations}
\begin{align}
\label{eq:dflt1}
v\in V^\perp\qquad&\Rightarrow\qquad \d\tilde\Fl_t(v)\in V^\perp\qquad\forall t\in\R,\\
\label{eq:dflt2}
v\in V^\para\qquad&\Rightarrow\qquad \d\tilde\Fl_t(v)\in V^\para\qquad\forall t\in\R.
\end{align}
\end{subequations}
Indeed, for the first notice that
\[
\la  \d\tilde\Fl_t(v),\nabla\b\ra\circ\tilde\Fl_t\stackrel{\eqref{eq:diffdf}}=\la v,\nabla(b\circ\tilde\Fl_t)\ra\stackrel{\eqref{eq:flt2}}=(\partial_xf_t)\circ\b\,\la v,\nabla\b\ra=0
\]
while for the second we pick $\hat g\in\mathcal G$ and compute
\[
\begin{split}
\la\d\tilde\Fl_t(v),\nabla\hat g\ra\circ\tilde\Fl_t\stackrel{\eqref{eq:diffdf}}=\la v,\nabla(\hat g\circ\tilde\Fl_t)\ra\stackrel{\eqref{eq:flt1}}= \la v,\nabla\hat g\ra=0,
\end{split}
\]
thus the arbitrariness of $\hat g$ and \eqref{eq:charpar} give the claim.

We are now ready to state and prove the main result of this section:
\begin{prop}\label{prop:speedproj}
With the same assumptions and notation of Section \ref{se:setup} the following holds.

Let $\ppi$ be a test plan on $\X$. Then for $\ppi$-a.e.\ $\gamma$ the curve $\Pr(\gamma)$ defined as $t\mapsto \Pr(\gamma_t)$ is absolutely continuous and 
\begin{equation}
\label{eq:speedproj}
\ms(\Pr(\gamma),t)\leq \tfrac{1}{w_\sfd(\b(\gamma_t))}\ms(\gamma,t)\qquad a.e.\ t,
\end{equation}
where we are writing $\ms(\gamma,t)$ for the metric speed of the curve $\gamma$ at the time $t$. 

Moreover, equality holds in \eqref{eq:speedproj} provided $t\mapsto\b(\gamma_t)$ is constant for $\ppi$-a.e.\ $\gamma$ i.e.\ if $\ppi$ is concentrated on curves lying on level sets of $\b$.
\end{prop}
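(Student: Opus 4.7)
The plan is to exploit the preservation \eqref{eq:dflt1} of the perpendicular submodule $V^\perp$ under the differential of the flow together with the speed-evolution formula \eqref{eq:derspeed}, and then transfer the resulting pointwise estimate to the test-plan level via duality with functions of the form $g\circ\Pr$.

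First I would specialize \eqref{eq:derspeed} and \eqref{eq:hesstildeb} to the case $\tilde\b=\b$ (so $\varphi=\Id$, hence $\varphi'\equiv 1$ and $\varphi''\equiv 0$): for $v\in V^\perp$ this gives
\[
\partial_s\bigl(\tfrac{1}{2}|v_s|^2\circ\Fl_s\bigr)=\psi_\sfd(\b)\circ\Fl_s\cdot |v_s|^2\circ\Fl_s,
\]
which combined with $\b\circ\Fl_s=\b+s$ from \eqref{eq:bflusso} and the definition \eqref{eq:defwd} of $w_\sfd$ integrates to the scaling identity
\[
|v_s|\circ\Fl_s(x)=\frac{w_\sfd(\b(x)+s)}{w_\sfd(\b(x))}\,|v|(x).
\]
The equality case of the proposition already falls out of this: if $\gamma$ lies on $\{\b=t_0\}$ then $\Pr\circ\gamma=\Fl_{-t_0}\circ\gamma$ for the \emph{fixed} map $\Fl_{-t_0}$, whose differential sends $V^\perp$ to $V^\perp$ by \eqref{eq:dflt1} and rescales perpendicular norms exactly by $1/w_\sfd(t_0)$; since the tangent velocities of curves inside a level set of $\b$ belong to $V^\perp$, equality in \eqref{eq:speedproj} is immediate.

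For the general inequality I would proceed by duality. The scaling identity, together with the chain rule and the fact that $\hat g=g\circ\Pr$ is $\Fl_s$-invariant (so $\nabla\hat g\in V^\perp$ by \eqref{eq:insiemiuguali}), should yield
\[
|\d\hat g|(x)=\frac{|\d g|(\Pr(x))}{w_\sfd(\b(x))}\qquad\mm\text{-a.e.}
\]
for $g$ in a reasonable Sobolev class on $\X'$. Then for a test plan $\ppi$ and such $g$, for $\ppi$-a.e.\ $\gamma$ the function $\hat g\circ\gamma$ is absolutely continuous with $|(\hat g\circ\gamma)'(t)|\le |\d\hat g|(\gamma_t)\,\ms(\gamma,t)$, hence
\[
|(g\circ\Pr\circ\gamma)'(t)|\le\frac{|\d g|(\Pr(\gamma_t))}{w_\sfd(\b(\gamma_t))}\,\ms(\gamma,t)\qquad a.e.\ t.
\]
Taking supremum over a sufficiently rich countable family of $g$'s and invoking the Sobolev-to-Lipschitz property on $(\X',\sfd',\mm')$ would then produce absolute continuity of $\Pr\circ\gamma$ in $\sfd'$ together with the desired speed bound \eqref{eq:speedproj}.

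The hard part is the final passage: moving from a duality bound driven by functions in $\mathcal G$ to the genuine metric speed in the \emph{intrinsic} distance $\sfd'$, which a priori dominates the restriction of $\sfd$ to $\X'$. This forces some preparatory work on the metric structure of $(\X',\sfd',\mm')$ -- most notably verifying that its Sobolev class is faithfully represented by traces of functions of the form $g\circ\Pr$ and that the Sobolev-to-Lipschitz property holds there -- and presumably leans on the length-space nature of $\X'$ built into the definition \eqref{eq:defdp} and on the density provided by Lemma \ref{le:densaA}. A secondary technicality is the chain-rule identity for $|\d(g\circ\Pr)|$, which I would first establish for locally Lipschitz $g$ by computing the pointwise Lipschitz constant from the scaling identity above, and then extend to general Sobolev $g$ by approximation.
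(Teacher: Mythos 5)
Your proposal gets the basic calculus right (the preservation of $V^\perp$ under the differential of the flow, the resulting scaling identity, the fact that $\Pr$ annihilates $V^\parallel$ and acts on $V^\perp$ by the explicit factor), but the route you propose for the general inequality has a genuine circularity problem, and you miss the single idea that makes the paper's argument close.

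The gap is in the duality step. You want to deduce \eqref{eq:speedproj} from the identity $|\d\hat g|=|\d g|\circ\Pr\big/(w_\sfd\circ\b)$ for a rich family of $g\in W^{1,2}_{loc}(\X')$, together with the Sobolev-to-Lipschitz property of $(\X',\sfd',\mm')$. But that identity is precisely Corollary \ref{cor:reldiff}, which in the paper is \emph{derived from} Proposition \ref{prop:speedproj}, not the other way around; and both the Sobolev structure of $\X'$ and its Sobolev-to-Lipschitz property (Lemma \ref{le:SobtoLip}) are established \emph{after} the proposition, using it. To define $|\d g|$ on $(\X',\sfd')$ intrinsically, you already need to understand test plans (equivalently, absolutely continuous curves) on $(\X',\sfd')$, and $\sfd'$ is a length metric built from $\sfd$-absolutely continuous curves inside the level set; controlling these is exactly what the proposition provides. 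Trying to first compute a pointwise Lipschitz constant of $g\circ\Pr$ from the flow scaling does not rescue this, because $\Pr(x)=\Fl_{-\b(x)}(x)$ has a point-dependent time parameter; the formal chain rule $\d\Pr(v)=\d\Fl_{-\b(x)}(v)-\nabla\b\circ\Pr\,\d\b(v)$ is plausible but making it rigorous for a map between metric measure spaces is precisely the delicate point you are trying to avoid.

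The idea the paper uses, and that your proposal does not contain, is to pick $\varphi(z):=-\tfrac12 z^2$, so that $\b\circ\tilde\Fl_s=e^{-s}\b$ and hence $\tilde\Fl_s\to\Pr$ pointwise as $s\to\infty$. With this choice \eqref{eq:derspeed} and \eqref{eq:hesstildeb} give closed-form expressions \eqref{eq:vspara} and \eqref{eq:vsperp} for the parallel and perpendicular norm evolution, leading to the bound $|v_s|\circ\tilde\Fl_s\le|v|\big(e^{-s}+\tfrac{w_\sfd(\b e^{-s})}{w_\sfd(\b)}\big)$, and the proposition follows by pushing $s\to\infty$ and using lower semicontinuity of metric speed. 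This sidesteps entirely any discussion of $\d\Pr$, of Sobolev calculus on $\X'$, or of the Sobolev-to-Lipschitz property there; the only machinery needed is the differential of a flow (a single map at each time) and the link between derivatives of test plans and metric speeds, both already available.

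A secondary issue: your equality-case argument uses the \emph{fixed} map $\Fl_{-t_0}$, but the proposition allows the test plan to be concentrated on curves lying in \emph{different} level sets, so $t_0$ varies with $\gamma$; you cannot push the whole plan forward under a single $\Fl_{-t_0}$ and keep the test-plan property in one stroke. The paper handles this by again using $\tilde\Fl_s$ (which is a single map for each $s$), getting exact equality in \eqref{eq:peruguale} for perpendicular velocities, and then recovering $\Pr$ from $\tilde\Fl_s$ via $\Pr(x)=\Fl_{-e^{-s}\b(x)}(\tilde\Fl_s(x))$ together with a localized bi-Lipschitz estimate \eqref{eq:lipfl} for the flow of $\nabla\b$, passing to the limit $s\uparrow\infty$. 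Your version would need an additional disintegration argument along the value of $\b$ and a check that the disintegrated plans remain test plans, which is not automatic.
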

\begin{proof} We follow the line of thought used to prove \cite[Proposition 3.23]{DPG16}, this time paying attention to the action of the flow on the parallel and perpendicular directions.

Let $\varphi(z):=-\tfrac12z^2$, $v\in L^2(T\X)$ and let $v^\para,v^\perp\in L^2(T\X)$ be its components in $V^\para,V^\perp$, respectively. Put $v_s:=\d\tilde\Fl_s(v)$, $v^\para_s:=\d\tilde\Fl_s(v^\para)$ and $v^\perp_s:=\d\tilde\Fl_s(v^\perp)$. Then by \eqref{eq:derspeed} and \eqref{eq:hesstildeb} it follows that
\begin{subequations}
\begin{align}
\label{eq:vspara}
|v^\para_s|\circ\tilde\Fl_s&=|v^\para|\,e^{-s},\\
\label{eq:vsperp}
|v^\perp_s|\circ\tilde\Fl_s&=|v^\perp|\exp\Big(\int_0^s-(\b\,\psi_\sfd\circ\b)\circ\tilde\Fl_r\,\d r\Big).
\end{align}
\end{subequations}
Formula \eqref{eq:derbfl} gives $\b\circ \tilde\Fl_r=e^{-r}\b$, thus we have
\[
\begin{split}
\int_0^s-(\b\psi_\sfd\circ\b)\circ\tilde\Fl_r\,\d r=\int_0^s-\b e^{-r}\psi_\sfd(\b e^{-r})\,\d r=\int_{\b e^{-s}}^\b -\psi_\sfd(z)\,\d z=\log\big(\tfrac{w_\sfd(\b e^{-s})}{w_\sfd(\b)}\big)
\end{split}
\]
and therefore
\begin{equation}
\label{eq:controllospeed}
|v_s|\circ\tilde\Fl_s=\sqrt{|v^\para_s|^2\circ\tilde\Fl_s+|v^\perp_s|^2\circ\tilde\Fl_s}\leq |v|\big(e^{-s}+\tfrac{w_\sfd(\b e^{-s})}{w_\sfd(\b)}\big)
\end{equation}
%
%
%
Now the inequality \eqref{eq:speedproj} follows along the same lines used in \cite[Proposition 3.33]{DPG16}:
\begin{itemize}
\item[-] The test plan admits a derivative $\ppi'_t$ as an element of a suitable pullback of the tangent module satisfying $|\ppi'_t|(\gamma)=\ms(\gamma,t)$ for $\ppi$-a.e.\ $\gamma$ and a.e.\ $t$ (see \cite[Section 2.3.5]{Gigli14}).
\item[-] The classical formula $\partial_t(\tilde\Fl_s(\gamma_t))=\d\tilde\Fl_{s,\gamma_t}(\gamma_t')$ admits a natural analogue in this setting if one works with derivatives of test plans in place of derivative of single curves (see \cite[Proposition 3.28]{DPG16}).
\item[-] Coupling these two informations with \eqref{eq:controllospeed} we see that for $\ppi$-a.e.\ $\gamma$ we have 
\[
\ms(\tilde\Fl_s(\gamma),t)\leq \ms(\gamma,t)(e^{-s}+\tfrac{w_\sfd(\b e^{-s})}{w_\sfd(\b)})\qquad a.e.\ t
\]
\item[-] The conclusion \eqref{eq:speedproj} follows letting   $s\to\infty$ in the above, noticing that $\tilde\Fl_s\to\Pr$ pointwise, recalling the normalization choice  \eqref{eq:normw} and using the lower semicontinuity of the metric speed.
\end{itemize}
For the equality case we argue along the following lines:
\begin{itemize}
\item[-] If the test plan $\ppi$ is concentrated on curves lying on level sets of $\b$, then its speed is orthogonal to $\nabla\b$ (as in the proof of Proposition \ref{prop:comploc})
\item[-] Formula \eqref{eq:vsperp} and the link between derivatives of test plans and metric speed recalled before ensure that for $\ppi$-a.e.\ $\gamma$ we have
\begin{equation}
\label{eq:peruguale}
\ms(\tilde\Fl_s(\gamma),t)= \ms(\gamma,t) \tfrac{w_\sfd(\b e^{-s})}{w_\sfd(\b)}\qquad a.e.\ t
\end{equation}
\item[-] Recall that from  \cite[Section 2]{BS18a}  if $L:=\||\Hess\b|_{\sf HS}\|_{L^\infty}<\infty$, then  we have the bi-Lipschitz estimate
\begin{equation}
\label{eq:lipfl}
e^{-L|s|}\sfd(x,y)\leq\sfd(\Fl_s(x),\Fl_s(y))\leq e^{L|s|}\sfd(x,y)
\end{equation}
valid for any $x,y\in\X$ and $s\in\R$, for the flow of $\nabla\b$. In our setting we don't know if $\Hess\b$ is bounded, but with a cut-off argument based on the fact that $|\Hess\b|_{\sf HS}$ is bounded by a function of $\b$, it is not hard to see that \eqref{eq:lipfl} still holds for $s\in[-1,1]$ and $x,y\in \b^{-1}([-1,1])$ for some constant $L$.

\item[-] We can assume that $\ppi$ is concentrated on curves lying on a bounded set. Thus for $s$ sufficiently big $(\tilde\Fl_s)_*\ppi$ is concentrated on curves lying in $\b^{-1}([-1,1])$.
\item[-] We use the identity $\Pr(x)=\Fl_{-e^{-s}\b(x)}(\tilde\Fl_{s}(x))$ and the previous item to deduce that for all $s$ sufficiently big the estimate
\[
\ms(\Pr(\gamma),t)\stackrel{\eqref{eq:lipfl}}\geq e^{-Le^{-s\b(\gamma_t)}}\ms(\tilde\Fl_s(\gamma),t)\stackrel{\eqref{eq:peruguale}}= e^{-Le^{-s\b(\gamma_t)}} \ms(\gamma,t) \tfrac{w_\sfd(\b e^{-s})}{w_\sfd(\b)}\qquad a.e.\ t
\]
holds for $\ppi$-a.e.\ $\gamma$. Letting $s\uparrow\infty$ and recalling again the normalization   \eqref{eq:normw}  we get the equality in \eqref{eq:speedproj}.
\end{itemize}
\end{proof}

\begin{cor}\label{cor:reldiff}
With the same assumptions and notation of Section \ref{se:setup}   the following holds.

Let $g\in L^2_{loc}(\X')$ be Borel and put, as before, $\hat g:=g\circ\Pr$. Then $g\in W^{1,2}_{loc}(\X')$ if and only if $\hat g\in W^{1,2}_{loc}(\X)$ and in this case
\begin{equation}
\label{eq:diffpr}
|\d \hat g|=\tfrac{1}{w_\sfd\circ\b }  |\d g|\circ\Pr\qquad\mm-a.e..
\end{equation}
\end{cor}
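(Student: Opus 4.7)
The strategy is to prove both implications via the test-plan characterization of Sobolev functions, leveraging Proposition~\ref{prop:speedproj}: the inequality "$\leq$" in~\eqref{eq:diffpr} will come from pushing test plans on $\X$ down to $\X'$ via $\Pr$, while the reverse inequality will come from lifting test plans on $\X'$ up to $\X$ via the flow $\Fl$.

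For the forward direction I would take a test plan $\ppi$ on $\X$ supported on curves staying in a bounded set and verify that $\tilde\ppi:=(\Pr\circ\cdot)_*\ppi$ is a test plan on $\X'$: bounded compression follows from that of $\ppi$ together with the representation of $\mm$ given by Proposition~\ref{prop:Tmeaspres}, while finite kinetic energy follows from~\eqref{eq:speedproj}. Writing the weak upper gradient inequality for $g$ against $\tilde\ppi$, using $g\circ\Pr=\hat g$ on the left and bounding via~\eqref{eq:speedproj} on the right yields
\[
\int|\hat g(\gamma_1)-\hat g(\gamma_0)|\,\d\ppi \;\leq\; \iint \frac{|\d g|\circ\Pr}{w_\sfd\circ\b}(\gamma_t)\,\ms(\gamma,t)\,\d t\,\d\ppi,
\]
showing that $\hat g\in S^2_{loc}(\X)$ with $|\d\hat g|\leq\tfrac{|\d g|\circ\Pr}{w_\sfd\circ\b}$ $\mm$-a.e. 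The $L^2_{loc}$-membership of $\hat g$ is immediate from Proposition~\ref{prop:Tmeaspres}.

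For the reverse direction, given a test plan $\ppi'$ on $\X'$ supported on bounded curves, I would lift it by flowing and averaging: for $a\in\R$ and $\delta>0$ set
\[
\tilde\ppi^{a,\delta}:=\frac{1}{2\delta}\int_{a-\delta}^{a+\delta}(F_s)_*\ppi'\,\d s,\qquad F_s(\sigma)(t):=\Fl_s(\sigma_t).
\]
Bounded compression again follows from Proposition~\ref{prop:Tmeaspres}; finite kinetic energy uses the equality case of~\eqref{eq:speedproj} applied to $F_s\sigma$, which lies on the level set $\{\b=s\}$, giving $\ms(F_s\sigma,t)=w_\sfd(s)\ms(\sigma,t)$. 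The weak upper gradient inequality for $\hat g$ against $\tilde\ppi^{a,\delta}$ has left-hand side equal to $\int|g(\sigma_1)-g(\sigma_0)|\,\d\ppi'$ (independent of $a$ and $\delta$) and right-hand side $\frac{1}{2\delta}\int_{a-\delta}^{a+\delta}w_\sfd(s)\Phi(s)\,\d s$ with $\Phi(s):=\iint|\d\hat g|(\Fl_s\sigma_t)\,\ms(\sigma,t)\,\d t\,\d\ppi'$. Lebesgue differentiation then gives, for $\mathcal L^1$-a.e.\ $a$, the pointwise bound $\int|g(\sigma_1)-g(\sigma_0)|\,\d\ppi'\leq w_\sfd(a)\Phi(a)$; combined with the measure-preserving identification $\sfT$ of Proposition~\ref{prop:Tmeaspres}, this yields $g\in S^2_{loc}(\X')$ together with the reverse inequality $|\d g|\circ\Pr\leq (w_\sfd\circ\b)\,|\d\hat g|$ $\mm$-a.e., completing~\eqref{eq:diffpr}.

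The delicate point is the $\delta\to 0$ passage in the reverse direction: it relies on Lebesgue differentiation together with the freedom to center the averaging interval at an arbitrary $a\in\R$, since the direct outcome of the weak upper gradient inequality only controls averages in $s$ and must be descended to individual levels. This asymmetry between the two directions reflects the fact that $\X'$ sits as a single level set of $\b$ and is therefore $\mm$-negligible, so lifting test plans from $\X'$ to $\X$ necessarily requires spreading them across an interval of levels.
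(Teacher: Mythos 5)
Your forward direction is essentially identical to the paper's: push the test plan down via $\Pr$, invoke Proposition~\ref{prop:Tmeaspres} for bounded compression and \eqref{eq:speedproj} for the speed estimate, and read off $\hat g\in W^{1,2}_{loc}(\X)$ together with the inequality $\leq$ in \eqref{eq:diffpr}. The lifting construction in the reverse direction is also the same as the paper's (there, $\ppi$ is the push-forward of $\ppi'\times\tfrac{1}{2T}\mathcal L^1\restr{[-T,T]}$ under $(x,t)\mapsto\Fl_t(x)$; your $\tilde\ppi^{a,\delta}$ is the shifted version).

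The delicate point is, as you yourself flag, the $\delta\to0$ passage, and here there is a subtle but genuine ordering issue. You apply Lebesgue differentiation to the test-plan integrated quantity $s\mapsto w_\sfd(s)\Phi(s)$, obtaining the plan-level bound $\int|g(\sigma_1)-g(\sigma_0)|\,\d\ppi'\leq w_\sfd(a)\Phi(a)$ for $\mathcal L^1$-a.e.\ $a$, and then want to read off $g\in S^2_{loc}(\X')$ and the reverse pointwise inequality. But the exceptional $\mathcal L^1$-null set of $a$'s produced this way depends on $\ppi'$; since the family of test plans is uncountable, you cannot select a single $a$ for which $w_\sfd(a)\,|\d\hat g|\circ\Fl_a$ acts as a weak upper gradient against every $\ppi'$, so you cannot conclude $g\in S^2_{loc}(\X')$ directly from this plan-level statement. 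The correct ordering is: first observe that for \emph{fixed} $a,\delta$ the averaged function $\hat G_{a,\delta}(x'):=\tfrac{1}{2\delta}\int_{a-\delta}^{a+\delta}w_\sfd(s)|\d\hat g|(\Fl_s(x'))\,\d s$ is an honest element of $L^2_{loc}(\X')$ and the bound holds uniformly over all $\ppi'$, so $g\in S^2_{loc}(\X')$ with $|\d g|\leq\hat G_{a,\delta}$ $\mm'$-a.e.; only then does one pass to the limit $\delta\to0$, now at the level of the $L^0$-function inequality $|\d g|\leq\hat G_{a,\delta}$, where a Fubini argument in $(a,x')$ controls the Lebesgue-point exceptional set.

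The paper avoids Lebesgue differentiation altogether: once $|\d g|\leq\hat G_{0,T}$ $\mm'$-a.e.\ is established, it feeds the already-proved forward inequality $\leq$ into the integrand $w_\sfd(s)|\d\hat g|(\Fl_s(x'))\leq|\d g|(x')$ and obtains the sandwich $|\d g|\leq\hat G_{0,T}\leq|\d g|$, forcing pointwise equality a.e.\ on $\b^{-1}([-T,T])$, and then lets $T\to\infty$. This squeeze is cleaner and sidesteps the Lebesgue-point bookkeeping. Your strategy can be made rigorous by the reordering above, but as written the step from the plan-level bound to $g\in S^2_{loc}(\X')$ is a gap.
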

\begin{proof}
It is clear from Proposition \ref{prop:Tmeaspres} that $g\in L^2_{loc}(\X')$ if and only if $\hat g\in L^2_{loc}(\X)$. Now assume that $g\in W^{1,2}_{loc}(\X')$ and let $\ppi$ be a test plan on $\X$ concentrated on curves lying on some bounded set. Then, since $\Pr$ is locally Lipschitz and by Proposition \ref{prop:Tmeaspres}, we have that $\Pr_*\ppi$ is  a test plan on $\X'$, where with a little abuse of notation we are denoting by $\Pr$ the map sending the curve $\gamma$ to the curve $t\mapsto\Pr(\gamma_t)$. Since $g$ is Sobolev we have
\[
\begin{split}
\int|\hat g(\gamma_1)-\hat g(\gamma_0)|\,\d\ppi(\gamma)&=\int|  g(\eta_1)-g(\eta_0)|\,\d\Pr_*\ppi(\eta)\\
&\leq \iint_0^1|\d g|(\eta_t)|\dot\eta_t|\,\d t\,\d\Pr_*\ppi(\eta)\\
\text{(writing $\eta=\Pr(\gamma)$ and using \eqref{eq:speedproj})}\qquad& \leq\iint_0^1\tfrac1{w_\sfd(\b(\gamma_t))} |\d g|(\Pr(\gamma_t))|\dot\gamma_t|\,\d t\,\d \ppi(\gamma),
\end{split}
\]
thus proving, by the arbitrariness of $\ppi$, that $\hat g\in W^{1,2}_{loc}(\X)$ and that inequality $\leq$ holds in \eqref{eq:diffpr}.

Now assume that  $\hat g\in W^{1,2}_{loc}(\X)$ and let $\ppi'$ be a test plan on $\X'$ concentrated on curves lying on some bounded set. Fix $T>0$, and consider the push forward $\ppi$ of the plan $\ppi'\times(\tfrac1{2T}\mathcal L^1\restr{[-T,T]})$ via the map $(x,t)\mapsto \Fl_t(x)$. Then from Proposition \ref{prop:Tmeaspres}, identity \eqref{eq:bflusso} and the fact that $\Fl:\X'\times\R\to\X$ is locally Lipschitz we see that $\ppi$ is a test plan on $\X$ concentrated on curves lying on level sets of $\b$, thus since $\hat g$ is Sobolev we have
\[
\begin{split}
\int|  g(\eta_1)-g(\eta_0)|\,\d\ppi'(\eta)&=\int|\hat g(\gamma_1)-\hat g(\gamma_0)|\,\d\ppi(\gamma)\\
&\leq\iint_0^1|\d\hat g|(\gamma_t)|\dot\gamma_t|\,\d t\,\d\ppi(\gamma)\\
\text{(by \eqref{eq:speedproj} and the def. of $\ppi$)}\qquad& =\iint_0^1\Big(\tfrac1{2T}\int_{-T}^T w_\sfd(s) |\d \hat g|(\Fl_s(\eta_t))\,\d s\Big)|\dot\eta_t|\,\d t\,\d \ppi'(\eta).
\end{split}
\]
Since $\ppi'$ was arbitrary, we see that $g$ is locally Soboloev with
\[
|\d g|(x')\leq \tfrac1{2T}\int_{-T}^T w_\sfd(s) |\d \hat g|(\Fl_s(x'))\,\d s\stackrel{*}\leq |\d g|(x')\qquad\mm'-a.e.\ x',
\]
where  the starred inequality comes from the already proven inequality $\leq$ in \eqref{eq:diffpr}. Thus the starred inequality must be an equality, and this forces the equality in \eqref{eq:diffpr} to hold $\mm$-a.e.\ on $\b^{-1}([-T,T])$. The conclusion follows by the arbitrariness of $T$.
\end{proof}
\subsection{Conclusion}\label{se:concl}

\begin{lemma}\label{le:SobtoLip}
With the same notation and assumptions as in Section \ref{se:setup} the following holds.

The warped product space $\R\times_w\X'$ has the Sobolev-to-Lipschitz property.
\end{lemma}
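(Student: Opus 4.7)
My plan is to transfer the Sobolev-to-Lipschitz property from the ambient $\RCD$ space $(\X,\sfd,\mm)$, where it holds by definition, to the warped product $\R\times_w\X'$ through the bijection $\sfT$. Given $f\in W^{1,2}_{loc}(\R\times_w\X')$ with $|df|_w\le 1$ $\mm_w$-a.e., I set $\hat f:=f\circ\sfT:\X\to\R$; the strategy is first to prove $\hat f\in W^{1,2}_{loc}(\X)$ with $|d\hat f|_\X\le 1$ $\mm$-a.e., then apply Sobolev-to-Lipschitz on $\X$ to obtain a 1-Lipschitz representative $\tilde f$ of $\hat f$, and finally take $f_\sharp:=\tilde f\circ\sfT^{-1}$ as the desired 1-Lipschitz representative of $f$ with respect to $\sfd_w$.

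For the first step I would argue via test plans. Given a test plan $\ppi$ on $\X$ concentrated on curves in a bounded set, the push-forward $\sfT_*\ppi$ along the map $\gamma\mapsto\sfT\circ\gamma$ is a candidate test plan on $\R\times_w\X'$: the compression bound passes through by Proposition \ref{prop:Tmeaspres}, and for the kinetic-energy bound I need the key identity $\ms_w(\sfT(\gamma),t)=\ms(\gamma,t)$ for $\ppi$-a.e.\ $\gamma$ and a.e.\ $t$. This identity I would establish by decomposing the velocity $\gamma'_t=(\gamma'_t)^\para+(\gamma'_t)^\perp$ with respect to $\nabla\b$: by $|d\b|=1$ the parallel piece $|\la\gamma'_t,\nabla\b\ra|^2$ matches exactly the $\R$-direction contribution in $\sfd_w$, while the perpendicular piece $|(\gamma'_t)^\perp|^2$ matches the $\X'$-direction contribution once the $1/w_\sfd(\b)$ scaling of $d\Pr$ on perpendicular vectors (from the analysis underlying Proposition \ref{prop:speedproj} and the explicit computations for $\d\tilde\Fl_s$ on $V^\perp$ in Section \ref{se:flowdist}) is combined with the warping factor in the definition of $\sfd_w$. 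Testing the weak-upper-gradient inequality for $f$ against $\sfT_*\ppi$ and using $|df|_w\le 1$ then gives $|d\hat f|_\X\le 1$.

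For the transfer back, the same speed-matching identity shows that $\sfT^{-1}$ sends absolutely continuous curves in $\R\times_w\X'$ to absolutely continuous curves in $\X$ of the same length, hence $\sfT^{-1}$ is $1$-Lipschitz and $f_\sharp$ is $1$-Lipschitz with respect to $\sfd_w$. Since $\sfT$ is measure preserving (Proposition \ref{prop:Tmeaspres}) and $\tilde f=\hat f$ $\mm$-a.e., the two functions agree $\mm_w$-a.e., so $f_\sharp$ is a 1-Lipschitz representative of $f$.

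The main obstacle is the sharp speed-matching identity under $\sfT$. Proposition \ref{prop:speedproj} only provides the one-sided bound $\ms(\Pr(\gamma),t)\le\frac{1}{w_\sfd(\b(\gamma_t))}\ms(\gamma,t)$ with equality only when $\gamma$ lies on a level set of $\b$; what I need is the finer statement in which the total speed on the right is replaced by the perpendicular component $|(\gamma'_t)^\perp|$, so that both contributions of $\gamma'_t$ are tracked separately. Producing this refinement requires the tangent-module decomposition $L^0(T\X)=V^\para\oplus V^\perp$ together with the precise action of $\d\tilde\Fl_s$ on each summand, as developed in Section \ref{se:flowdist}; once in place, it simultaneously delivers $\ms_w\circ\sfT=\ms$ (needed here) and the metric isometry claim underlying Theorem \ref{thm:genspl}.
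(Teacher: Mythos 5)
Your proposed route is genuinely different from the paper's, and it contains a gap that you yourself flag but cannot close with the available tools.

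The paper proves this lemma quickly and indirectly: one verifies that $\X'$ is locally doubling (from local doubling of $\X$ and local Lipschitzness of $\Pr$) and that $\X'$ has the measured length property of \cite[Definition 3.17]{GH15} (from the analogous property of $\X$ together with the one-sided speed bound \eqref{eq:speedproj}), and then invokes the abstract result \cite[Theorem 3.24]{GH15}, which asserts the Sobolev-to-Lipschitz property for warped products $\R\times_w\X'$ over such a fiber. Only the inequality in Proposition \ref{prop:speedproj} is needed; no speed identity is required.

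Your plan instead hinges on the pointwise identity $\ms_w(\sfT(\gamma),t)=\ms(\gamma,t)$ for $\ppi$-a.e.\ $\gamma$ and a.e.\ $t$, for an \emph{arbitrary} test plan $\ppi$. This is strictly stronger than anything established in Section \ref{se:flowdist}. Proposition \ref{prop:speedproj} gives only the one-sided bound $\ms(\Pr(\gamma),t)\le w_\sfd(\b(\gamma_t))^{-1}\ms(\gamma,t)$ with the \emph{total} speed on the right (not the perpendicular component), and equality is proved only for plans concentrated on level sets of $\b$. Upgrading to the identity you want requires two further non-trivial facts, neither of which the paper proves: (i) a Pythagorean decomposition relating the metric speed $\ms(\gamma,t)$ to the components of the test-plan derivative $\ppi'_t$ in $V^\para$ and $V^\perp$, with the $V^\perp$ part matching $w_\sfd(\b)\,\ms(\Pr(\gamma),t)$; and (ii) the analogous decomposition $\ms_w((\eta,\sigma),t)^2 = |\dot\eta_t|^2 + w_\sfd(\eta_t)\,\ms(\sigma,t)^2$ for curves in the warped product, which does not follow merely from the defining formula \eqref{eq:defdw} (that gives only the $\le$ direction for product-type curves; the reverse requires a separate length/intrinsicality argument for $\sfd_w$). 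Were you able to establish your speed identity, you would in fact be giving a direct proof of the full isometry in Theorem \ref{thm:split}, bypassing the Sobolev-duality step \cite[Proposition 4.20]{Gigli13} entirely. That is a legitimate alternative program, but it is not achievable from the paper's Section \ref{se:flowdist} alone, and the paper chooses the indirect route precisely because the direct speed-matching statement is delicate. As written, your proof is incomplete at its central step, and the two technical points above are the missing ingredients.

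One further small issue: you conclude that $\sfT^{-1}$ is $1$-Lipschitz, but \eqref{eq:T} only records that $\sfT$ is \emph{locally} bi-Lipschitz. To pass from a length/speed preservation statement to global $1$-Lipschitzness of $f_\sharp$ on the non-geodesic distance $\sfd_w$ you need to know that $\sfd_w$ is a length distance and that $\X$ is geodesic; the latter is true for $\RCD$ spaces, but this is a point you would need to address.
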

\begin{proof}
The arguments used in \cite[Theorem 3.34]{DPG16} carry over. One first proves that $\X'$ is locally doubling (because $\X$ is and $\Pr$ is locally Lipschitz) and has the measured length property, see \cite[Definition 3.17]{GH15}  (this follows from the fact that $\X$ has such property and the estimate \eqref{eq:speedproj}).

Then \cite[Theorem 3.24]{GH15}  applies. 
\end{proof}

\begin{thm}\label{thm:split}
With the same notation and assumptions as in Section \ref{se:setup} the following holds.

The map $\sfT:\X\to\R\times_w\X'$ is a measure preserving isometry.
\end{thm}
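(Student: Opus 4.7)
Since Proposition \ref{prop:Tmeaspres} has already established that $\sfT$ is measure preserving, the only remaining task is to show that it is distance preserving. I would prove the two inequalities $\sfd_w(\sfT(x),\sfT(y)) \leq \sfd(x,y)$ and $\sfd(x,y) \leq \sfd_w(\sfT(x),\sfT(y))$ separately.

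For the inequality $\sfd_w \leq \sfd$, the plan is to take a nearly minimizing absolutely continuous curve $\gamma:[0,1] \to \X$ from $x$ to $y$ and consider its image $\sfT(\gamma) = (\b\circ\gamma,\Pr\circ\gamma)$. Local Lipschitzness of $\sfT$ ensures absolute continuity in $\R \times \X'$. The $\b$-component has metric speed bounded by $|\partial_r(\b\circ\gamma_r)| \leq \ms(\gamma,r)$ thanks to $|\d \b| = 1$. For the $\Pr$-component, Proposition \ref{prop:speedproj} gives $\ms(\Pr\gamma, r) \leq w_\sfd(\b(\gamma_r))^{-1}\ms(\gamma,r)$; however, to combine these two estimates into a pointwise bound on the warped-product speed, I need the sharper version that isolates the perpendicular part of $\dot\gamma$. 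This refinement can be read off from the proof of Proposition \ref{prop:speedproj}: the inequality \eqref{eq:controllospeed} actually decouples into the bounds \eqref{eq:vspara}, \eqref{eq:vsperp} on the two components, and letting $s\to\infty$ in the perpendicular estimate gives control of $\ms(\Pr\gamma, r)$ by $|(\dot\gamma_r)^\perp|/w_\sfd$. Using $|\dot\gamma_r|^2 = |(\dot\gamma_r)^\para|^2 + |(\dot\gamma_r)^\perp|^2$ together with $|\partial_r(\b\circ\gamma)| = |(\dot\gamma_r)^\para|$ then yields the desired pointwise control of the warped-product speed of $\sfT(\gamma)$.

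For the reverse inequality $\sfd \leq \sfd_w\circ\sfT$, my plan is to use duality with 1-Lipschitz functions combined with the Sobolev-to-Lipschitz property. By Lemma \ref{le:SobtoLip} and the $\RCD$ property of $\X$, the distance on both sides is characterized as the supremum of differences of functions whose minimal weak upper gradient is bounded by one. So it suffices to check that $F\colon \R\times_w\X'\to\R$ satisfies $|\d F|_w \leq 1$ $\mm_w$-a.e.\ if and only if $f := F\circ\sfT$ satisfies $|\d f|_\X \leq 1$ $\mm$-a.e. Here Corollary \ref{cor:reldiff} is the key input for the $\X'$-directional component: it translates the perpendicular part of $\d f$ on $\X$ (isolated using \eqref{eq:insiemiuguali}) into the $\X'$-gradient of $F(t,\cdot)$, the factor of $1/w_\sfd\circ\b$ in Corollary \ref{cor:reldiff} precisely cancelling with the $w_\sfd$ weight appearing in the warped-product pointwise norm of 1-forms in the $\X'$-direction. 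The parallel component contributes $|\partial_tF|^2$ and, by $|\d\b|=1$, matches the corresponding term on $\X$. Once this identity $|\d f|_\X = |\d F|_w\circ\sfT$ is established $\mm$-a.e.\ for, say, $F$ in an algebra of the form used in Lemma \ref{le:densaA}, density plus approximation extends it to all $F\in W^{1,2}_{loc}$, concluding the proof.

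The main obstacle is this pointwise identity of differentials: although the warped-product calculus of \cite{GH15} provides the template, one must carefully match the perpendicular/parallel decomposition inherited from \eqref{eq:insiemiuguali} with the product-type differential structure of $\R\times_w\X'$, checking compatibility of the warping weights. I expect this to reduce, via $L^0$-linearity and density of the algebra $\mathcal A$ in $W^{1,2}(\X)$, to verifying the identity on gradients of $\hat g\in\mathcal G$ and $\hat h\in\mathcal H$, for which Corollary \ref{cor:reldiff} and the chain rule for $\b$ respectively give what is needed.
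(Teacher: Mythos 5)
Your second half is essentially the paper's proof. The paper's single-step argument is: since $\sfT$ is measure preserving (Proposition \ref{prop:Tmeaspres}) and both $\X$ and $\R\times_w\X'$ enjoy the Sobolev-to-Lipschitz property (for the latter this is Lemma \ref{le:SobtoLip}), one invokes \cite[Proposition 4.20]{Gigli13}, which reduces the statement to the identity $|\d(F\circ\sfT)|_\X=|\d F|_{\R\times_w\X'}\circ\sfT$ $\mm$-a.e.\ for all $F$, and then one verifies this on the algebra $\mathcal A$ via Corollary \ref{cor:reldiff} (for $\mathcal G$), \eqref{eq:b1} (for $\mathcal H$), infinitesimal Hilbertianity, orthogonality of $\nabla\hat g$ and $\nabla\hat h$, and the density of Lemma \ref{le:densaA}. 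This is exactly what you describe in your second paragraph; the cited lemma from Gigli13 is just the packaged form of the Sobolev-to-Lipschitz duality you reconstruct.

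What you do differently is to split into two one-sided inequalities and run a separate curve-based argument for $\sfd_w\circ\sfT\leq\sfd$. This is not wrong, but it is redundant: the duality argument in your second half already gives both directions, since the bijection $F\mapsto F\circ\sfT$ between $\{|\d F|_w\leq 1\}$ and $\{|\d f|_\X\leq 1\}$ combined with the Sobolev-to-Lipschitz property on each side identifies the two sup-formulas for the distances simultaneously. The curve-based route also carries extra technical burden that the paper's route avoids: it requires (i) extracting from the proof of Proposition \ref{prop:speedproj} the decomposed estimate $\ms(\Pr(\gamma),t)\leq w_\sfd(\b(\gamma_t))^{-1}|(\dot\gamma_t)^\perp|$, which holds in the test-plan sense but is never isolated as a statement; and (ii) a length-reparametrization lemma for $\sfd_w$, namely that the energy functional in \eqref{eq:defdw} computes squared lengths and that the metric speed of $\sfT(\gamma)$ in $(\R\times\X',\sfd_w)$ is indeed $\sqrt{|\partial_r(\b\circ\gamma)|^2+w_\sfd(\b(\gamma_r))\ms(\Pr\gamma,r)^2}$. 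These are both doable (and are implicit in \cite{GH15}), but it is cleaner to bypass them via the Sobolev-side argument alone, as the paper does. In summary: your proposal is correct in substance, the second half coincides with the paper's method, and the first half is an alternative but unnecessary detour.
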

\begin{proof}
Since we already know that $\sfT$ is measure preserving (by Proposition \ref{prop:Tmeaspres}) and both $\X$ and $\R\times_w\X'$ have the Sobolev-to-Lipschitz property, according to \cite[Proposition 4.20]{Gigli13}  it is sufficient to prove that $f\in W^{1,2}(\R\times_w\X')$ iff $f\circ \sfT\in W^{1,2}(\X)$ and in this case
\begin{equation}
\label{eq:samediff}
|\d f|_{\R\times_w\X'}=|\d(f\circ\sfT)|_\X\qquad\mm-a.e..
\end{equation}
By a density argument based on Lemma \ref{le:densaA}, it suffices to prove the above for functions $f$ such that $f\circ\sfT\in\mathcal A$. 

Corollary  \ref{cor:reldiff}  ensures that \eqref{eq:samediff} holds if $f\circ\sfT\in \mathcal G$,  while   \eqref{eq:b1}  gives that \eqref{eq:samediff}  holds for $f\circ\sfT\in \mathcal H$ (see also the arguments used in \cite[Section 6.2]{Gigli13}). The conclusion now follows exactly as in \cite[Section 6.2]{Gigli13} (see also \cite[Section 3.8]{DPG16}) using the fact that both $\X$ and $\R\times_w\X'$ are infinitesimally Hilbertian ($\X'$ is so because of identity \eqref{eq:diffpr}, then the property carries to warped products by \cite[Section 3.2]{GH15}) and that functions in $\mathcal G$ and  $\mathcal H$ have orthogonal gradients. We omit the details. 
\end{proof}

\begin{prop}\label{prop:rcdxprimo}
With the same notation and assumptions as in Section \ref{se:setup} the following holds.

Assume furthermore that for some $\bar z\in\R$ we have $\psi_\sfd\leq 0$ on $(-\infty,\bar z]$ and $\psi_\sfd\geq 0$ on $[\bar z,+\infty)$ (thus in particular  $\psi_\sfd(\bar z)=0$). Then $(\X',\sfd',\mm')$ is an $\RCD(\tfrac1{w_\sfd^2(\bar z)}K,N)$ space.
\end{prop}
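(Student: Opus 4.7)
The plan is to use the splitting isomorphism $\sfT:\X\to\R\times_w\X'$ provided by Theorem \ref{thm:split} and exploit that the sign assumption on $\psi_\sfd=(\log w_\sfd)'$ forces $w_\sfd$ to attain its global minimum at $\bar z$. The geometric consequence is that the slice $S:=\{\bar z\}\times\X'$ is \emph{totally geodesic} in $\R\times_w\X'$: for any competitor $(\eta,\gamma)$ in the infimum defining $\sfd_w((\bar z,x),(\bar z,y))^2$ one has
\[
\int_0^1 |\dot\eta_r|^2+w_\sfd(\eta_r)|\dot\gamma_r|^2\,\d r\ \geq\ w_\sfd(\bar z)\int_0^1|\dot\gamma_r|^2\,\d r\ \geq\ w_\sfd(\bar z)\,\sfd'(x,y)^2,
\]
with equality forcing $\eta\equiv\bar z$ and $\gamma$ a constant-speed geodesic in $(\X',\sfd')$. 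This identifies the intrinsic distance on $S$ with $\sqrt{w_\sfd(\bar z)}\,\sfd'$ and makes $S$ convex in $\X$.

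The core task is to transfer the $\RCD(K,N)$ property from $\X$ to this (rescaled) slice. Since $S$ is $\mm$-negligible, I would proceed by approximation: given $\mu_0,\mu_1\in\pr(\X')$ absolutely continuous with $L^\infty$ densities and bounded support, lift them to narrow slab measures on $\R\times_w\X'\cong\X$,
\[
\mu_i^\delta:=c_\delta^{-1}\,\mathbf 1_{[\bar z-\delta,\bar z+\delta]}(t)\,\d t\otimes\mu_i,
\]
which are absolutely continuous w.r.t.\ $\mm_w$. Applying the $CD(K,N)$ entropy convexity along the $L^2$-Wasserstein geodesic $(\mu_s^\delta)$ in $\X$ and passing to the limit $\delta\to 0$, one checks that: optimal transport concentrates on the slice (since $w_\sfd$ is minimized at $\bar z$ the slice is the cheapest transport corridor); $\Pr_*\mu_s^\delta$ converges to a Wasserstein geodesic of $\mu_0,\mu_1$ in $(\X',\sqrt{w_\sfd(\bar z)}\sfd')$; and the Rényi entropy of $\mu_s^\delta$ factors asymptotically into a harmless additive $t$-marginal contribution and the $\X'$-marginal entropy. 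The limit yields the $CD(K,N)$ inequality for $(\X',\sqrt{w_\sfd(\bar z)}\sfd',\mm')$. Infinitesimal Hilbertianity of $\X'$ is inherited directly from $\X$ via Corollary \ref{cor:reldiff}, which identifies gradient moduli up to a multiplicative factor depending only on $\b$. Finally, rescaling the distance by $1/\sqrt{w_\sfd(\bar z)}$ converts the constant $K$ into $K/w_\sfd^2(\bar z)$, matching the stated $\RCD(K/w_\sfd^2(\bar z),N)$.

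The principal obstacle will be the $\delta\to 0$ limit in the entropy convexity inequality, especially making the $t$-marginal contribute only an additive constant and showing the $\X$-transport cost between slab-lifts converges to the intrinsic slice cost with the correct constant in front of $K$. The decisive structural input is the local flatness $\psi_\sfd(\bar z)=0$ (automatic from the sign hypothesis by continuity), which makes $w_\sfd$ vary only quadratically near $\bar z$; this is precisely what keeps $t$-excursions cheap and lets the $\RCD$ constant pass to the limit without extra loss. The same mechanism is what prevents an improvement in the dimension bound $N$: extracting a sharper $N$ would require a warped-Bochner analysis in the spirit of \cite{Ketterer13}, which the authors explicitly defer.
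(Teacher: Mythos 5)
Your overall scaffolding (approximate $\X'$ by thin slabs around the level $\bar z$, then pass to a limit) matches the paper's, but the two key mechanisms you invoke are where the approaches diverge, and as sketched your version has two genuine gaps.

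First, the heuristic ``the slice is totally geodesic, so transport concentrates there'' is not the form of convexity you actually need: the slice is $\mm$-null, so the Wasserstein geodesic between your slab lifts $\mu_0^\delta,\mu_1^\delta$ will necessarily spread across the slab. What is needed is \emph{convexity of the strip} $\b^{-1}([\bar z-\delta,\bar z+\delta])$, i.e.\ that the optimal plan stays inside the strip. The paper proves exactly this (their claim \eqref{eq:strisce}) via a non-trivial argument: they build a ``projection onto the strip'' $\Pr_T$ as a limit of Regular Lagrangian Flows of $\nabla(\varphi\circ\b)$ for convex $\varphi$ vanishing on $[-T,T]$, show that $\Pr_T$ does not increase kinetic energy of test plans under the sign hypothesis on $\psi_\sfd$, and then appeal to \emph{uniqueness} of the optimal dynamic plan from \cite{GigliRajalaSturm13} to conclude that $\Pr_T$ fixes it. Your proposal states only the pointwise slice observation and does not supply this plan-level invariance.

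Second, your claim that ``the Rényi entropy factors asymptotically into a harmless additive $t$-marginal contribution'' does not hold as stated: the Rényi functional $S_N(\rho\mm)=-\int\rho^{1-1/N}\,\d\mm$ is not additive under products of spaces, and for $\mu_i^\delta$ with densities of order $c_\delta^{-1}\sim\delta^{-1}$ both sides of the $CD(K,N)$ inequality blow up like $\delta^{1/N-1}$. A proof along your route would have to divide out this rate, simultaneously control the distortion coefficients (which depend on $\sfd$-distances in the slab, not $\sfd'$-distances on $\X'$) as $\delta\to0$, and verify that the optimal slab plan's projection converges to an optimal plan in $(\X',\sfd')$. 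None of these is automatic. The paper avoids all of this by a cleaner device: once strip-convexity is established, the strips $(\X_T,\sfd_T,\mm_T)$ are themselves $\sf CD(K,N)$ as metric measure spaces in their own right; they pmGH-converge to $(\X',\sfd,\mm')$ (the key estimate being $|\sfd(\Pr x,\Pr y)-\sfd(x,y)|\leq 2T$); and the conclusion then follows from \emph{stability} of the $\sf CD$ condition under pmGH convergence, together with a final check that $\sfd=\sfd'$ on $\X'$ because the limit is a geodesic space. Your sketch omits both the stability step and the $\sfd=\sfd'$ verification. Finally, as a small but real issue: your rescaling arithmetic at the end does not close — scaling a distance by $1/\sqrt{w_\sfd(\bar z)}$ turns a $\sf CD(K,N)$ bound into $\sf CD(K\, w_\sfd(\bar z),N)$, not $\sf CD(K/w_\sfd^2(\bar z),N)$ — so you should double-check both the exponent of $w_\sfd$ appearing in the intrinsic slice metric (compare the paper's Proposition \ref{prop:speedproj}, where the scaling factor is $1/w_\sfd$, not $1/\sqrt{w_\sfd}$) and the direction of the scaling in Sturm's Proposition~1.4.
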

\begin{proof} As already noticed, by Corollary \ref{cor:reldiff} it easily follows that $\X'$ is infinitesimally Hilbertian, so we need only to prove the ${\sf CD}(K,N)$ condition.

For $z\in\R$ let $\X'_z:=\b^{-1}(z)$ and equip it with the distance $\sfd'_z$ defined as
\[
\sfd'_z(x,y)^2:=\inf\int_0^1|\dot\gamma_t|^2\,\d t,
\]
the inf being taken among all absolutely continuous curves $\gamma:[0,1]\to \X'_z\subset\X$. We also equip $\X'_z$ with the measure $\mm'_z:=(\Fl_z)_*\mm'$. Then using the equality case in Proposition \ref{prop:speedproj} and using the fact that $\X'$ has the measured length property (briefly mentioned in the proof of Lemma \ref{le:SobtoLip} above) it is not hard to see that $\Fl_z:\X'\to\X'_z$ satisfies
\[
\sfd'_z(\Fl_z(x),\Fl_z(y))=w_\sfd(z)\sfd'(x,y)\qquad\forall x,y\in\X'.
\]
Thus if we establish that $\X'_{\bar z}$ is $\RCD(K,N)$, taking into account how the ${\sf CD}$ condition scales with the distance (see \cite[Proposition 1.4]{Sturm06II}) we conclude. With this said, replacing $\b$ with $\b+\bar z$ we can assume that $\bar z=0$ and then the goal is to prove that $\X'$ is $\RCD(K,N)$.

The key geometric property that allows us to conclude, and for which we shall use the assumption on $\psi_\sfd$, is the following:
\begin{equation}
\label{eq:strisce}
\begin{split}
&\text{Let $\mu_0,\mu_1\in\pr(\X)$ be $\ll\mm$ and $\ppi$ be such that $W_2^2(\mu_0,\mu_1)=\iint_0^1|\dot\gamma_t|^2\,\d t\,\d\ppi(\gamma)$.}\\
&\text{Assume that $\supp(\mu_i)\subset \b^{-1}([-T,T])$, $i=0,1$.}\\
&\text{Then $\ppi$ is concentrated on curves lying on $ \b^{-1}([-T,T])$.}
\end{split}
\end{equation}
To see why this holds, let $\varphi:\R\to\R$ be $C^2$ with bounded second derivative, convex, identically 0 on $[-T,T]$ and strictly positive elsewhere. Then the (Regular Lagrangian) Flow $(\tilde\Fl_t)$ of $\nabla\tilde b$ with $\tilde\b=\varphi\circ\b$ is the identity on the strip $\b^{-1}([-T,T])$ and converges to the `projection on the boundary of such strip' outside of it, namely defining $\Pr_T:\X\to\X$ as:
\[
\Pr_T(x):=\left\{
\begin{array}{ll}
x,&\quad\text{if }\b(x)\in[-T,T],\\
\Fl_{T-\b(x)}(x),&\quad\text{if }\b(x)>T,\\
\Fl_{-T-\b(x)}(x),&\quad\text{if }\b(x)< -T,
\end{array}
\right.
\]
we have $\tilde\Fl_s(x)\to \Pr_T(x)$ as $s\uparrow\infty$ for any $x\in\X$. Then arguing exactly as for Proposition \ref{prop:speedproj} we obtain that: for every test plan $\ppi$ we have that for $\ppi$-a.e.\ $\gamma$ it holds
\begin{equation}
\label{eq:prT}
\ms(\Pr_T(\gamma),t)\leq \tfrac{w_\sfd(\b(\Pr_T(\gamma_t)))}{w_\sfd(\b(\gamma_t))}\ms(\gamma,t)\qquad a.e.\ t.
\end{equation}
Now recall that by \eqref{eq:defwd} we have $\log(\tfrac{w_\sfd(z_2)}{w_\sfd(z_1)})=\int_{z_1}^{z_2}\psi_\sfd\,\d\mathcal L^1$ for every $z_1,z_2\in\R$, $z_1<z_2$ and use the assumption on $\psi_\sfd$ (with $\bar z=0$) to conclude from \eqref{eq:prT} that  for $\ppi$-a.e.\ $\gamma$ it holds
\begin{equation}
\label{eq:prT2}
\ms(\Pr_T(\gamma),t)\leq  \ms(\gamma,t)\qquad a.e.\ t.
\end{equation}
It directly follows from this that the total kinetic energy ${\sf KE}(\ppi):=\tfrac12\iint_0^1|\dot\gamma_t|^2\,\d t\,\d\ppi(\gamma)$ does not increase under the action of $\Pr_T$. In particular, this occurs for $\ppi$ as in \eqref{eq:strisce}, thus  obtaining ${\sf KE}((\Pr_T)_*\ppi)\leq \tfrac12W_2^2(\mu_0,\mu_1)$. On the other hand, we know from \cite{GigliRajalaSturm13} that for $\mu_0,\mu_1\ll\mm$ there is exactly one plan $\ppi$ for which ${\sf KE}(\ppi)\leq  \tfrac12W_2^2(\mu_0,\mu_1)$, thus we conclude that $(\Pr_T)_*\ppi=\ppi$, which is the claim \eqref{eq:strisce}.
 
A direct consequence of \eqref{eq:strisce} and the very definition of ${\sf CD}(K,N)$ spaces (see \cite{Sturm06II}) is that the space $(\X_T,\sfd_T,\mm_T)$ given by $\X_T:=\b^{-1}([-T,T])$ with $\sfd_T$ being the restriction of the distance and $\mm_T:=\frac{1}{\int_{-T}^Tw_\mm }\mm\restr{\b^{-1}(-T,T)}$ is ${\sf CD}(K,N)$ as a consequence of $(\X,\sfd,\mm)$ being so (recall that a scaling of the measure does not affect Curvature-Dimension bounds). Therefore by the stability of the ${\sf CD}$ condition (see  \cite{Sturm06II}, \cite{Villani09}  or \cite{GMS15}) to conclude it suffices to prove that for any fixed $p\in\X'$ the spaces $(\X_T,\sfd_T,\mm_T,p)$ converge to $(\X',\sfd',\mm',p)$ as $T\downarrow0$ in the pointed-measured-Gromov-Hausdorff sense.

To see this, consider the map $\Pr:\X_T\to\X'$ and notice that $\Pr(p)=p$, that $\Pr_*\mm_T=\mm'$ (by \eqref{eq:defmp}) and that 
\[
\begin{split}
|\sfd(\Pr(x),\Pr(y))-\sfd(x,y)|\leq \sfd(\Pr(x),x)+\sfd(\Pr(y),y)\leq 2T\qquad\forall x,y\in \X_T.
\end{split}
\]
This suffices (see e.g.\ \cite[Section 3.5]{GMS15}) to prove that $(\X_T,\sfd_T,\mm_T,p)$  pmGH-converge to $(\X',\sfd,\mm',p)$, which therefore is a ${\sf CD}(K,N)$ space, so we are left to prove that $\sfd=\sfd'$ on $\X'$. To see this, notice that since $(\X',\sfd,\mm')$ is ${\sf CD}(K,N)$ and $\supp(\mm')=\X'$ (as direct consequence of the assumption $\supp(\mm)=\X$), we have that $(\X',\sfd)$ is a geodesic space, i.e.\ given $x,y\in\X'$ there is a curve $\gamma:[0,1]\to\X'$ with $\int_0^1|\dot\gamma_t|^2\,\d t=\sfd^2(x,y)$. It follows by the very definition of $\sfd'$ that $\sfd'\leq \sfd$, and since the other inequality is trivially true, the proof is complete.
\end{proof}

\section{$\RCD$ spaces with positive spectrum}

\subsection{Reminders: measure-valued Laplacian and Bochner inequality}

Here we briefly recall some additional material that will be useful in obtaining the main application we have of the general splitting theorem just proved.

The first is a generalization of the notion of Laplacian that we have seen in Definition \ref{def:lapl}:
\begin{def1}
Let $(\X,\sfd,\mm)$ be an infinitesimally Hilbertian space and $f\in W^{1,2}_{loc}(\X)$. We shall say that $f\in D(\bd_{loc})$ provided there is a Radon functional $\mu$ such that
\[
\int \la \nabla f,\nabla g\ra\,\d\mm=-\int gh\,\d\mm\qquad \forall g\in \Lip(\X)\text{ with bounded support}.
\]
In this case the measure $\mu$ is denoted  $\bd f$.
\end{def1}
Recall that a \emph{Radon functional} is a linear functional $L$ from the space $C_{bs}(\X)$ of continuous functions on $\X$ with bounded support to $\R$ such that for every $K\subset\X$ there is $C_K\geq 0$ such that
\[
|L(f)|\leq C_K\sup|f|\qquad\forall f\in C_{bs}(\X)\text{ with support in }K.
\]
Thus Radon functionals should be thought of (and we shall do so) as signed measures that have finite total variation on compact sets, but that in principle might have both positive and negative parts of infinite mass, see also the discussion in \cite{CavMon20}. Notice also that in the field of non-smooth analysis some authors use the term Radon measure for what we are calling here Radon functionals, see for instance \cite[Remark 2.12]{AB18}.

There are natural compatibility conditions between these  notion of Laplacian and that in Definition \ref{def:lapl}, in particular
\begin{equation}
\label{eq:complap}
\text{if $f\in D(\bd_{loc})$ and $\bd f=g\mm$ for $g\in L^2_{loc}$, then $f\in D(\Delta_{loc})$ with $\Delta f=g$.}
\end{equation}
Both the divergence and the Laplacian obey natural calculus rules, see \cite{Gigli14}. It can also be proved that these concepts of Laplacian are linked to energy minimizers, meaning that for $U\subset\X$ open and $f\in L^2_{loc}(U)$ the following are equivalent:
\begin{itemize}
\item[i)] $f\in D(\Delta_{loc},U)$ and $\Delta f=0$,
\item[ii)] $f\in D(\bd,U)$ and $\bd f=0$,
\item[iii)] for any $g\in W^{1,2}_0$ we have 
\[
\int_U|\d f|^2\,\d\mm\leq \int_U|\d (f+g)|^2\,\d\mm.
\]
\end{itemize}
For the proof of the equivalences see \cite{Gigli12} and \cite{Gigli-Mondino12}.

\begin{def1}[Harmonic functions]
Let $(\X,\sfd,\mm)$ be infinitesimally Hilbertian, $U\subset\X$ be open and $f\in L^2_{loc}(U)$. We say that $f$ is harmonic if it satisfies any of the three equivalent conditions above.
\end{def1}
We shall use the following regularity results for harmonic functions, that extend to the $\RCD$ setting classical estimates by Cheng-Yau:
\begin{thm}[Cheng-Yau type gradient estimate - see  \cite{Hua-Kell-Xia13}] Let \((\X,\di,\m)\) be an \(\RCD(K,N)\) metric measure space with \(K\leq 0\) and \(N\in[1,\infty)\). Then there exists a constant \(C=C(N)\) such that every positive harmonic function \(u\) on geodesic ball \(B_{2R}\subset \X\) satisfies
\begin{equation}\label{chengyau}
\frac{|\d u|}{u}\leq C\frac{1+\sqrt{-K}R}{R}\hspace{0.4cm}\text{ in }B_R.
\end{equation}
\end{thm}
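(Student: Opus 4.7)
The plan is to adapt the classical Cheng--Yau gradient estimate to the $\RCD$ setting, following \cite{Hua-Kell-Xia13}. First I would set $v := \log u$, which is well defined since $u > 0$, and use harmonicity of $u$ together with the chain rules for $\d$ and $\Delta$ to compute $\Delta v = -|\d v|^2$. Writing $Q := |\d v|^2 = |\d u|^2/u^2$, the $\RCD(K,N)$ Bochner inequality in the form $\tfrac12\Delta Q \geq (\Delta v)^2/N + \la \nabla v, \nabla \Delta v\ra + K Q$ then yields the fundamental differential inequality
\[
\tfrac12 \Delta Q \ \geq\ \tfrac{Q^2}{N} \,-\, \la \nabla v, \nabla Q\ra \,+\, K Q,
\]
to be understood in the appropriate weak or measure-valued sense.

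Next I would build a cut-off function $\varphi \colon \X \to [0,1]$ supported in $B_{2R}$ and identically $1$ on $B_R$, enjoying the $N$-dependent bounds $|\d\varphi|^2 \leq C_N \varphi / R^2$ and $|\Delta\varphi| \leq C_N (1 + \sqrt{-K}\,R)/R^2$. Such cut-offs exist on $\RCD(K,N)$ spaces via Laplacian comparison applied to the distance function, for instance by taking $\varphi = \psi(\sfd(\cdot,p))$ for a suitable $\psi \in C^2_c(\R)$. Properness of $\RCD(K,N)$ spaces guarantees that $\varphi Q$ attains its supremum at some point $x_0 \in \overline{B}_{2R}$; if $x_0 \in \partial B_{2R}$ then $\varphi Q \equiv 0$ on $B_R$ and the estimate is trivial, so one may assume that $x_0$ is interior.

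At $x_0$ one has, formally, $\nabla(\varphi Q)(x_0) = 0$ and $\Delta(\varphi Q)(x_0) \leq 0$. Expanding $\Delta(\varphi Q) = Q\Delta\varphi + 2\la \nabla\varphi, \nabla Q\ra + \varphi \Delta Q$, using the first-order condition to replace $\la \nabla\varphi, \nabla Q\ra$ by $-Q|\d\varphi|^2/\varphi$, substituting the Bochner estimate for $\Delta Q$, and absorbing the cross term $\varphi \la \nabla v, \nabla Q\ra$ through Cauchy--Schwarz (using $|\nabla v| = \sqrt Q$), one reaches a quadratic inequality for $(\varphi Q)(x_0)$ of the form
\[
\tfrac{(\varphi Q)(x_0)^2}{N}\ \leq\ C_N\,\frac{(1 + \sqrt{-K}\,R)^2}{R^2}\,(\varphi Q)(x_0),
\]
whence $\sup_{B_R} Q \leq (\varphi Q)(x_0) \leq C_N (1+\sqrt{-K}\,R)^2/R^2$. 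Taking square roots converts $\sqrt Q = |\d u|/u$ into the stated bound.

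The main obstacle is making the pointwise maximum-principle step rigorous, since $Q$ is in general only Sobolev and $\Delta$ is a measure-valued operator in the sense of $\bd$. The standard remedy, and the one implemented in \cite{Hua-Kell-Xia13}, is to regularize $Q$ via the heat semigroup and exploit the Bakry--\'Emery $\Gamma_2$ formulation of Bochner valid on $\RCD(K,N)$ spaces, working at positive times where all quantities are regular enough and then passing to $t \downarrow 0$. Alternatively one tests the measure-valued inequality $\bd Q \geq \cdots$ against a nonnegative test function concentrated near the maximizer of $\varphi Q$. Either route delivers the stated Cheng--Yau estimate.
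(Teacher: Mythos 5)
The paper does not prove this theorem: it is stated and attributed directly to \cite{Hua-Kell-Xia13}, with no argument given. Your sketch is a reasonable reconstruction of the classical Cheng--Yau argument that the cited reference adapts to the $\RCD$ setting: set $v=\log u$, derive $\Delta v=-|\d v|^2$, invoke Bochner for $Q:=|\d v|^2$, localize with a cut-off, and close a quadratic inequality. The overall strategy is aligned with what the reference does, so the high-level picture is fine.

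That said, as a proof the sketch has genuine gaps beyond the one you flag. First, you implicitly use that $u$ (hence $v$) is locally Lipschitz so that $|\d u|$ makes pointwise sense and $Q$ is bounded; in the nonsmooth setting this is itself a nontrivial a priori regularity statement (it follows from De Giorgi--Nash--Moser plus gradient estimates for harmonic functions on $\RCD$ spaces, and in fact must be established \emph{before} the quantitative Cheng--Yau bound can even be formulated pointwise). Second, to apply the improved Bochner inequality as stated in the paper you need $v=\log u\in\Test_{loc}(\X)$, i.e.\ $\Delta v=-Q\in W^{1,2}_{loc}$, which again requires the second-order regularity of $u$ one is trying to quantify; one needs to make the bootstrap explicit. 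Third, and most importantly, the ``maximum principle at an interior maximizer of $\varphi Q$'' step is more delicate than either of the two remedies you mention: $\bd(\varphi Q)$ is only a Radon functional and $\varphi Q$ is a priori only Sobolev, so there is no guarantee that a maximizer $x_0$ exists at which $\nabla(\varphi Q)=0$ and $\bd(\varphi Q)\le 0$ in any useful sense, and testing the measure inequality against a bump near the max does not recover the pointwise identity $\langle\nabla\varphi,\nabla Q\rangle(x_0)=-Q|\nabla\varphi|^2/\varphi(x_0)$ that your computation relies on. The rigorous implementations in the literature replace the pointwise maximum principle by integral estimates and Moser iteration applied to the differential inequality $\tfrac12\bd Q\ge(\tfrac{Q^2}{N}-\langle\nabla v,\nabla Q\rangle+KQ)\mm$, or by heat-flow regularization combined with careful limiting arguments; either route requires substantially more bookkeeping than the sketch indicates. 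None of these issues is fatal, but each needs to be addressed before the argument is a proof rather than a heuristic.
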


\medskip

We conclude this introductory section recalling (a suitable version of) the Bochner inequality.  In order to state it, we need to recall the concept of \emph{essential dimension} ${\rm dim}(\X)$ of a finite dimensional $\RCD$ space:
\begin{thm}\label{thm:dimX}
Let \(\X\) be an \(\RCD(K,N)\) space with \(K\in\R\) and \(N\in [1,+\infty)\). Then there exists an integer \(\dim (\X)\in[1, N]\) such that the tangent module \(L^0(T\X)\) has constant dimension equal to \(\dim (\X)\).
\end{thm}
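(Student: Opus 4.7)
The plan is to follow the Brué–Semola approach, combining Mondino–Naber style rectifiability with the invariance of tangent-module dimension under Regular Lagrangian Flows. First I would observe that the pointwise dimension $d:\X\to\N\cup\{+\infty\}$ of $L^0(T\X)$, defined as the local dimension of the module stalk at $\mm$-a.e.\ point, is a well-defined $\mm$-measurable function. The upper bound $d\leq N$ $\mm$-a.e.\ comes from the rectifiability theorem for $\RCD(K,N)$ spaces: $\mm$-a.e.\ point belongs to some $k$-regular stratum $\mathcal R_k$ (where iterated tangents are $\R^k$), with $k\leq N$, and on $\mathcal R_k$ the module has dimension exactly $k$ (the gradients of $k$ independent ``harmonic-like'' coordinates generate the fiber there). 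So the remaining point is to prove $d$ is $\mm$-a.e.\ constant.

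The strategy has two parts. Part (A), invariance of $d$ under flows: for $f\in W^{2,2}(\X)\cap\Lip_{bs}(\X)$ with $\Delta f\in L^\infty$ the vector field $\nabla f$ satisfies the hypotheses of the existence/uniqueness theorem for Regular Lagrangian Flows recalled in the preliminaries, and $\Fl_t$ has bounded compression in both directions on compact time intervals. As in \cite[Section 2.4]{Gigli14}, such a measure-quasi-invariant Borel map induces a differential $\d\Fl_t:L^0(T\X)\to L^0(T\X)$ which is $L^0$-linear in the appropriate pulled-back sense and, by invertibility of $\Fl_t$ with inverse $\Fl_{-t}$ of the same type, is a fiberwise isomorphism at $\mm$-a.e.\ point. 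In particular $d(\Fl_t(x))=d(x)$ for $\mm$-a.e.\ $x$ and every $t$.

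Part (B), covering the space by flow orbits: since test functions form an algebra dense in $W^{1,2}$ whose gradients generate the whole tangent module (this is essentially what drives Lemma~\ref{le:densaA}), if $d$ were to take two distinct values $k_1<k_2$ on Borel sets $E_1,E_2$ both of positive measure, one could, after localizing to small balls where compositions of finitely many flows $\Fl^{f_1}_{t_1}\circ\cdots\circ\Fl^{f_n}_{t_n}$ act with bounded compression and fill a set of positive measure, transport a subset of $E_1$ into $E_2$. By (A) this would yield $k_1=k_2$, a contradiction. Hence $d$ is $\mm$-a.e.\ constant, and the common value is an integer in $[1,N]$.

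The hardest step is (B): making the covering-by-orbits argument genuinely rigorous requires either a Poincaré-type recurrence/accessibility statement for gradient RLFs on $\RCD$ spaces or, as in Brué–Semola, a quantitative regularity of the flow $\Fl_t$ in the space variable (local Hölder/Lusin-Lipschitz estimates) that lets one detect Euclidean tangents at both endpoints simultaneously and match their dimensions. This is where one needs results outside the self-contained framework of the preliminaries; everything else is essentially a soft consequence of the module formalism and the well-posedness of RLFs already recalled.
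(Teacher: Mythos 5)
The paper does not prove this theorem; it is stated as a black box with citations, the essential reference being Bru\`e--Semola (\cite{BS18a} plus their subsequent CPAM paper, together with \cite{Mondino-Naber14}, \cite{GP16-2}, \cite{GP16} for the rectifiability/module-dimension identification). So there is no ``paper's own proof'' to compare against; the question is whether your outline is a sound reconstruction of the cited argument.

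It captures the right strategy and, to your credit, you explicitly flag the hard point. Still, two steps are asserted more smoothly than they can be. In part (A), having bounded compression in both time directions does \emph{not} by itself yield a differential $\d\Fl_t:L^0(T\X)\to L^0(T\X)$: the construction in \cite[Section 2.4]{Gigli14} requires the map to be (locally) Lipschitz, not merely measure-quasi-invariant. For an RLF of a generic vector field $\nabla f$ with $f\in W^{2,2}\cap\Lip_{bs}$, $\Delta f\in L^\infty$, the covariant derivative $\Hess f$ is only in $L^2$, so the flow is only Lusin--Lipschitz; defining a fiberwise isomorphism and deducing $d(\Fl_t(x))=d(x)$ from this requires precisely the quantitative regularity of Bru\`e--Semola, which is the main technical theorem of the cited work, not a soft consequence of the RLF formalism. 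In part (B), the ``transport a subset of $E_1$ into $E_2$ by composing finitely many flows'' step is the genuine accessibility argument; density of the algebra generated by test functions in $W^{1,2}$ (the analogue of Lemma \ref{le:densaA}) gives that gradients generate the tangent module, but it does not, on its own, give a controllability/orbit-covering statement for the corresponding flows. You correctly identify that this is where the argument leaves the self-contained framework. In short: a correct roadmap to the Bru\`e--Semola proof, with the two load-bearing steps (Lusin--Lipschitz regularity of RLFs, and the resulting transport/accessibility argument) named but not supplied.
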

The proof of this result is highly non-trivial, and ultimately coming from \cite{BS18a} (but see also \cite{Gigli13}, \cite{Mondino-Naber14}, \cite{DPR}, \cite{DPMR16}, \cite{MK16}, \cite{GP16-2}, \cite{GP16}).

We can now state:
\begin{thm}[Improved Bochner Inequality] Let \(\X\) be an \(\RCD(K,N)\) space with \(K\in \R\) and \(N\in[1,+\infty)\). Then for any \(f\in\Test_{loc}(\X)\) we have   \(|\d f|^2\in D(\bd_{loc})\) and
\begin{equation}\label{bochner}
\bd \left(\frac{|\d f|^2}{2}\right)\geq \left(|\Hess(f)|_{\sf HS}^2+K|\d f|^2+\langle \d f,\d \Delta f\rangle+\frac{\left(\Delta f-\tr\Hess(f)\right)^2}{N-\dim(\X)} \right)\m,
\end{equation}
where \(\frac{\left(\Delta f-\tr\Hess(f)\right)^2}{N-\dim(\X)}\) is taken to be \(0\) in the case \(\dim(\X)=N\).
\end{thm}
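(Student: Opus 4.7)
My plan is to combine the infinite-dimensional Bochner inequality (which contributes the Hessian term) with a dimensional correction extracted from the $\RCD(K,N)$ condition (which contributes the gap $(\Delta f-\tr\Hess f)^2/(N-\dim\X)$).

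First I would start from the infinite-dimensional Bochner inequality, available because $\RCD(K,N)\subset\RCD(K,\infty)$:
\[
\bd(|\d f|^2/2)\ \geq\ \bigl(|\Hess f|^2_{\sf HS}+K|\d f|^2+\la \d f,\d\Delta f\ra\bigr)\,\mm.
\]
This already produces three of the four terms of the statement and is established using the second-order calculus in \cite{Gigli14}, in particular the construction of $\Hess f$ as a measurable tensor and the fact that $|\d f|^2\in D(\bd_{loc})$ for $f\in\Test_{loc}(\X)$.

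To extract the fourth term I would use a self-improvement argument of Savaré-Sturm type. Apply the standard dimensional $\RCD(K,N)$ Bochner inequality
\[
\bd(|\d g|^2/2)\ \geq\ \bigl((\Delta g)^2/N+K|\d g|^2+\la \d g,\d\Delta g\ra\bigr)\,\mm
\]
to post-compositions $g=\varphi(f)$ for a suitable one-parameter family of smooth $\varphi:\R\to\R$, and expand using the chain rules
\[
\Hess g=\varphi'(f)\,\Hess f+\varphi''(f)\,\d f\otimes \d f,\qquad \Delta g=\varphi'(f)\,\Delta f+\varphi''(f)\,|\d f|^2.
\]
After subtracting the infinite-dimensional inequality applied to $g$ and optimizing the ratio $\varphi''(f)/\varphi'(f)$, the residual contribution gives precisely the term $(\Delta f-\tr\Hess f)^2/(N-d)$, where $d=\dim(\X)$. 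The algebraic identity driving the optimization is that for any symmetric tensor $A$ on a $d$-dimensional fiber and any $\lambda\in\R$,
\[
|A|^2_{\sf HS}+\frac{(\lambda-\tr A)^2}{N-d}\ \geq\ \frac{\lambda^2}{N},
\]
with equality iff $A=(\lambda/N)\Id$; this follows by Cauchy-Schwarz $|A|^2_{\sf HS}\geq (\tr A)^2/d$ together with Young-type optimization in the trace. Crucially, Theorem \ref{thm:dimX} guarantees that $\Hess f$ is a symmetric tensor on a $d$-dimensional fiber $\mm$-a.e., so this pointwise bound is applicable.

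The main obstacle will be making the self-improvement rigorous in the non-smooth setting: one must carefully handle the measure-valued Laplacian under composition, justify the limiting procedure over the family $\varphi$ (typically via heat flow regularization together with monotonicity of certain entropy-like functionals), and invoke the dimensional $L^2$-gradient estimate for the heat flow that characterizes $\RCD(K,N)$. The identification of the essential dimension in Theorem \ref{thm:dimX} is precisely what renders the $(N-d)^{-1}$-weighted correction a well-defined non-negative quantity, with the convention $0/0=0$ resolving the case $\dim(\X)=N$.
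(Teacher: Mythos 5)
The paper does not prove this theorem: it states it and cites \cite{Han14} as the source (together with the earlier works \cite{Gigli-Kuwada-Ohta10}, \cite{AmbrosioGigliSavare11-2}, \cite{Erbar-Kuwada-Sturm13}, \cite{Gigli14}). So there is no in-paper proof to compare against; I can only assess the internal soundness of your sketch.

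Your high-level idea — start from the $\RCD(K,\infty)$ Bochner inequality with Hessian term and then run a Savar\'e--Sturm self-improvement using the $\RCD(K,N)$ Bochner inequality on compositions, with the essential dimension theorem entering to make $N-\dim(\X)$ meaningful — is in the right spirit, and your algebraic inequality (with its equality case) is correct. However, the mechanism you propose does not yield the stated conclusion. If $g=\varphi(f)$, then $\Hess g=\varphi'\,\Hess f+\varphi''\,\d f\otimes\d f$ is only a \emph{rank-one perturbation} of $\varphi'\Hess f$ in the single direction $\d f\otimes\d f$, so optimizing over $\varphi''/\varphi'$ probes only the scalar $\Hess f(\nabla f,\nabla f)$, not $\tr\Hess f$ nor the full $|\Hess f|_{\sf HS}^2$. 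Carrying out your steps precisely (writing the measure-valued Ricci tensor, using its tensoriality under $g=\varphi(f)$, expanding, and maximizing the quadratic in $t=\varphi''/\varphi'$) produces
\[
\bd\tfrac{|\d f|^2}{2}-\bigl(|\Hess f|^2_{\sf HS}+K|\d f|^2+\la\d f,\d\Delta f\ra\bigr)\m
\ \geq\ \left(\frac{(\Delta f)^2}{N}-|\Hess f|^2_{\sf HS}
+\frac{\bigl(\tfrac{1}{N}\Delta f\,|\d f|^2-\Hess f(\nabla f,\nabla f)\bigr)^2}{\bigl(1-\tfrac1N\bigr)|\d f|^4}\right)\m,
\]
and the right-hand side does \emph{not} dominate $\tfrac{(\Delta f-\tr\Hess f)^2}{N-d}\,\m$ in general. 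A concrete obstruction: take pointwise $\Hess f=0$, $\Delta f=1$ (allowed on $\RCD$ spaces, where $\Delta f$ and $\tr\Hess f$ can differ), $d=2$, $N=3$; your right-hand side equals $\tfrac12$ while the claimed improvement requires $1$. Your algebraic identity is useful to verify \emph{a posteriori} that the improved inequality is at least as strong as the $N$-dimensional Bochner inequality, but it is not a step by which the improved bound is produced from single-function post-composition. To get the full Hessian and trace terms one must, as in \cite{Savare13} and in Han's proof, apply the $N$-Bochner inequality to compositions $\Phi(f_1,\ldots,f_k)$ of \emph{several} test functions and exploit the resulting non-negativity of a quadratic form indexed by symmetric matrices, combined with the density of $\{\nabla g\}$ in the tangent module; single-variable post-composition is structurally insufficient.
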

This result was proved in  \cite{Han14} (strongly based on the earlier  \cite{Gigli-Kuwada-Ohta10},\cite{AmbrosioGigliSavare11-2}, \cite{Erbar-Kuwada-Sturm13},  \cite{Gigli14}).

\medskip


\subsection{Volume of the ends and harmonic functions}\label{chvolends}

The concept of `end' borns in the smooth category of Riemannian manifolds, but it can easily be adapted to metric spaces:
\begin{def1}
Let \((\X,\di)\) be a metric space and $K\subset\X$ compact. A set $E\subset\X$ is called end of $\X$ with respect to $K$ provided:
\begin{itemize}
\item[-] $E$ is an unbounded connected component of $\X\setminus K$,
\item[-] For any  $K'\supset K$ compact the set $E\setminus K'$ has only one unbounded connected component.
\end{itemize}
Suppose that $(\X,\sfd)$ is equipped with a Radon measure $\mm\geq 0$. Then we say that an end $E$ has infinite volume if $\mm(E)=+\infty$.
\end{def1}

Notice that if $E$ is an end of $\X$ w.r.t.\ $K$ and $K'\supset K$ is compact, then the only unbounded connected component $E'$ of $E\setminus K'$ is an end w.r.t.\ $K'$. Also, in this case  $E$ has infinite volume if and only if $E'$ does.

Let \(E\) be an end of \(\X\) with respect to \(K\) and let \(p\in K\). We indicate with \(E(R):=E\cap B_R(p)\) for every \(R>\dist(E,p)\). Moreover we define \(\partial E:=\partial K\cap \bar{E}\) and \(\partial E(R):=\partial B_R(p)\cap E\).

We indicate with \(V_E(R)\) the volume of \(E(R)\) and with \(V_E(\infty)\) the volume of the end \(E\).

We conclude defining the first eigenvalue of the Laplacian:
\begin{def1}[First eigenvalue of the Laplacian]
Let \((\X,\di,\m)\) be a metric measure space. We define
\[\lambda_1:=\inf\left\{ \frac{\int_\X |\d f|^2 \,\d\m}{\int_\X|f|^2\,\d\m}:f\in W^{1,2}(\X),\ \int f^2\,\d\m\neq 0 \right\}.
\]
\end{def1}
Notice that the definition makes sense on arbitrary metric measure spaces, regardless of the linearity of the Laplacian (but we shall only work on infinitesimally Hilbertian spaces).

In this section, following the steps used by Li and Wang, we prove that an end has infinite volume if and only if there exists a non-constant bounded harmonic function on it. In order to do this we begin studying some decay estimates for a class of harmonic functions.

In \cite[Section B.1]{GV21} the following result has been established, the point being the continuity at $\partial K^r$ (see also  \cite{Bjorn-Bjorn11}). Below for $K\subset X$ and $r>0$ we put
\[
K^r:=\{x:\di(x,K)<r\}.
\]
We  consider such enlargements to gain sufficient regularity of the boundary to be sure that the harmonic function given by the statement below is continuous up to the boundary.
\begin{thm}\label{thm:exharm} Let \((\X,\di,\m)\)  be an $\RCD(K,N)$ space, $N<\infty$, $K\subset X$ a bounded subset and $r>0$. Let also $B\subset X$ be a ball containing $K^r$. Then there is $f\in W^{1,2}_0(B)\cap C(B)$ with  \(0\leq f\leq 1\) on \(B\)  that is harmonic on $B\setminus \overline{K^r}$ and equal to 1 in $\overline{K^r}$.
\end{thm}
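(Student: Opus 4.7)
The plan is to obtain $f$ as the solution of an obstacle / Dirichlet-type variational problem, the only delicate point being continuity across $\partial K^r$.

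\textbf{Step 1: Set-up of the variational problem.} Because $\overline{K^r}$ is compact and contained in the open ball $B$, there is a Lipschitz cut-off $\chi:\X\to[0,1]$, equal to $1$ on $\overline{K^r}$ and with $\supp(\chi)$ a compact subset of $B$ (built from truncations of $\sfd(\cdot,K)$). Consider the closed convex set
\[
\mathcal C := \big\{f\in W^{1,2}_0(B)\ :\ f\geq 1\ \text{$\mm$-a.e.\ on } K^r\big\},
\]
which is non-empty since $\chi\in\mathcal C$, and minimize the Dirichlet energy $f\mapsto\int_B|\d f|^2\,\d\mm$ over $\mathcal C$. Standard lower semi-continuity of the Cheeger energy together with weak compactness of bounded sets in $W^{1,2}(B)$ (and the fact that $\mathcal C$ is weakly closed) produces a minimizer $f$. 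Since the truncation $(f\wedge 1)\vee 0$ stays in $\mathcal C$ and does not increase the energy, we may assume $0\leq f\leq 1$.

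\textbf{Step 2: Harmonicity on $B\setminus\overline{K^r}$ and boundary behaviour in the $W^{1,2}$ sense.} By comparing $f$ with $f+t g$ for $g\in W^{1,2}_0(B\setminus\overline{K^r})$ and $t\in\R$ small, the minimality gives $\int\la\nabla f,\nabla g\ra\,\d\mm=0$, so $f$ is harmonic on $B\setminus\overline{K^r}$ in the sense of the excerpt. Plugging $g\in W^{1,2}_0(B)$, $g\geq 0$, supported in $K^r$, together with the constraint $f\geq 1$ on $K^r$ yields $f=1$ $\mm$-a.e.\ on $K^r$.

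\textbf{Step 3: Continuity inside $B$.} On the open set $B\setminus\overline{K^r}$ the function $f$ is harmonic, hence locally Lipschitz (in particular continuous) by the Jiang--Cheng--Yau regularity theory for harmonic functions on $\RCD$ spaces (which gives \eqref{chengyau} and local Lipschitz bounds for harmonic functions, after shifting by $1$). On $K^r$ the function equals $1$, so it is continuous there. The only issue is continuity across $\partial K^r$, and this is where the enlargement by $r$ is used.

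\textbf{Step 4: Continuity at $\partial K^r$ --- the main obstacle.} For any $y\in\partial K^r$ there exists $x\in \overline K$ with $\sfd(x,y)=r$, hence the segment from $x$ to $y$ (really, a minimizing geodesic, which exists in $\RCD(K,N)$) enters $K^r$ with a definite corkscrew: for some $z$ close to $y$ the ball $B_{r/2}(z)$ is contained in $K^r$. This \emph{uniform corkscrew/thickness} of $K^r$ at every boundary point, combined with the fact that $\RCD(K,N)$ spaces are locally doubling and support a $(1,2)$-Poincaré inequality (so the theory in \cite{Bjorn-Bjorn11} applies), implies that every $y\in\partial K^r$ is a regular boundary point for the Dirichlet problem on $B\setminus\overline{K^r}$: a Wiener-type criterion is satisfied because the capacity density of $K^r$ at $y$ is bounded below uniformly. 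Applying the resulting barrier/Perron-type continuity result of Björn--Björn (as in the cited \cite[Section B.1]{GV21}) to the harmonic function $f$ on $B\setminus\overline{K^r}$ with boundary value $1$ on $\partial K^r$ and $0$ on $\partial B$, one gets $f(z)\to 1$ as $z\to y\in\partial K^r$ from the harmonic side. This matches the value $1$ on $K^r$, so $f$ is continuous at $\partial K^r$.

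I expect Step 4 to be the main obstacle: harmonicity and interior regularity are by now standard in the $\RCD$ framework, but boundary continuity at $\partial K^r$ must be extracted from Wiener-type regularity results on PI/$\RCD$ spaces, and the whole point of replacing $K$ by the enlargement $K^r$ is precisely to ensure the geometric thickness condition that makes each boundary point regular.
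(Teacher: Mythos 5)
The paper does not actually prove this theorem: it cites \cite[Section B.1]{GV21}, only remarking that ``the point is the continuity at $\partial K^r$'' and pointing to \cite{Bjorn-Bjorn11} for the relevant boundary regularity theory. Your proposal reconstructs the argument those citations point to, and the reconstruction is essentially correct: a direct variational construction gives existence and harmonicity off $\overline{K^r}$, interior Lipschitz regularity of harmonic functions on $\RCD(K,N)$ spaces gives continuity in $B\setminus\overline{K^r}$, the truncation gives $0\leq f\leq 1$ with $f\equiv 1$ on $K^r$, and the whole purpose of passing from $K$ to the enlargement $K^r$ is to enforce a uniform geometric thickness of the obstacle so that each point of $\partial K^r$ is Wiener-regular.

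Two small points. In Step 2, the conclusion $f=1$ a.e.\ on $K^r$ follows immediately from the truncation $f\leq 1$ together with the constraint $f\geq 1$ on $K^r$; the argument using nonnegative test functions supported in $K^r$ is unnecessary. In Step 4, the corkscrew must hold at \emph{all} scales $\rho\in(0,r]$ for the capacity-density/Wiener criterion to apply, not merely at the single scale $r/2$: given $y\in\partial K^r$ (so $\sfd(y,K)=r$), pick a nearest point $x\in\overline K$ and a geodesic from $y$ to $x$; the point $z$ at distance $\rho/2$ from $y$ along it satisfies $B_{\rho/2}(z)\subset K^r$ and $\sfd(z,y)=\rho/2$ for every $\rho\leq r$. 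With this multi-scale corkscrew stated explicitly, the uniform lower capacity-density bound holds at every boundary point and the barrier/Perron continuity theory of \cite{Bjorn-Bjorn11} applies as you claim, giving $C(B)$-regularity across $\partial K^r$.
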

In particular, let $E$ be an end of $X$, say w.r.t.\ a compact set $K$. Then 
\begin{equation}
\label{eq:kp}
\text{$E$ is also an end w.r.t.\ the compact set }\ K':=\overline{K^1}.
\end{equation}
 Then for $p\in K'$ and $R>0$ large enough we can apply the above with $B=B_R(p)$ with $R>0$ big enough to find $f_R \in W^{1,2}_0(B)\cap C(B)$  with  \(0\leq f_R\leq 1\) on \(B\)  that is harmonic on $B\setminus K'$ and equal to 1 in $K'$.

Thanks to the maximum principle (see \cite{Bjorn-Bjorn11} for the proof of the maximum principle in the nonsmooth setting, see also \cite{GR19} for a different proof) the functions \(f_R\) generated above are pointwise increasing in \(R\), thus we can define:
\begin{def1}\label{def:fE} The function $f_E:E\setminus K'\to[0,1]$ is defined as the pointwise limit of the $f_R$'s described above.
\end{def1}
It is easy verify that $f_E$ depends only on $E$ and $K'$, and not on the particular $p\in K'$ chosen, and that it is harmonic. The dependence of this function on $K'$ is omitted from the notation for brevity. Also, in what follows we relabel $E$ to be $E\setminus K'$. 

Our first goal is  to prove  some key decay estimates for the function \(f_E\).

\begin{lemma}\label{lem1} 
Let \((\X,\di,\m)\) be an \(\RCD(K,N)\) m.m.s.\ and \(E\)  an end of it. Assume that \(\lambda_1>0\). Then for the harmonic function  \(f_E\) given by Definition \ref{def:fE} there exists a constant \(C=C(E,K,\lambda_1)\)  such that for every \(R\) large enough the following estimates hold:
\begin{align}
\int_{E(R+1)\setminus E(R)}f_E^2\,\d\m &\leq Ce^{ -2\sqrt{\lambda_1}R},\label{eq11}\\
\int_{E(R+1)\setminus E(R)}|\nabla f_E|^2\,\d\m &\leq Ce^{ -2\sqrt{\lambda_1}R},\label{eq12}\\
\int_{E(R)}e^{2\sqrt{\lambda_1}\di (\cdot,p)}\left|\nabla f_E\right|^2\,\d\m &\leq CR\label{eq13}.
\end{align}
\end{lemma}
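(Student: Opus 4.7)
The approach is an Agmon-type exponential decay argument, combining the variational characterization of $\lambda_1$ with the harmonicity of $f_E$. For any non-negative Lipschitz function $\phi$ with bounded support and vanishing on a neighbourhood of $\partial E$, the function $\phi f_E$ (extended by $0$ outside $E$) lies in $W^{1,2}(\X)$ and $\phi^2 f_E$ is an admissible test in the weak formulation of $\Delta f_E = 0$, giving $\int \phi^2|\nabla f_E|^2\,\d\m = -2\int \phi f_E\langle\nabla\phi,\nabla f_E\rangle\,\d\m$. Expanding $|\nabla(\phi f_E)|^2$ and using this identity yields $\int|\nabla(\phi f_E)|^2\,\d\m = \int f_E^2|\nabla\phi|^2\,\d\m$, which combined with the Rayleigh bound $\lambda_1\int(\phi f_E)^2\,\d\m \leq \int|\nabla(\phi f_E)|^2\,\d\m$ produces the master inequality
\[
(\star)\qquad \lambda_1 \int\phi^2 f_E^2\,\d\m \;\leq\; \int f_E^2|\nabla\phi|^2\,\d\m.
\]

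To prove \eqref{eq11}, I would apply $(\star)$ with $\phi(x) = \chi_0(r(x))\rho_R(r(x))$, where $r := \di(\cdot,p)$, $\chi_0$ is a fixed Lipschitz cutoff equal to $0$ on $\{r\leq r_0\}$ (with $r_0$ chosen so that $K'\subset B_{r_0}(p)$) and equal to $1$ for $r\geq r_0+1$, and $\rho_R$ is an Agmon weight: $\rho_R(r) = e^{\sqrt{\lambda_1}r}$ for $r\leq R$, $\rho_R(r) = e^{\sqrt{\lambda_1}R}$ for $r\in[R,R+1]$, then tapered linearly to $0$ by $r=R+2$. The defining feature of the exponential piece is the exact identity $|\rho_R'|^2 = \lambda_1\rho_R^2$, so the corresponding contributions on the annulus $\{r_0+1\leq r\leq R\}$ to the two sides of $(\star)$ cancel exactly; what survives is a uniformly bounded inner-cutoff term, the main term $\lambda_1 e^{2\sqrt{\lambda_1}R}\int_{E(R+1)\setminus E(R)}f_E^2\,\d\m$ coming from the plateau, and an outer error of size at most $e^{2\sqrt{\lambda_1}R}\int_{E(R+2)\setminus E(R+1)} f_E^2\,\d\m$ coming from the linear taper. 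A bootstrap then closes the argument: first run the same computation with an exponent $\alpha<\sqrt{\lambda_1}$ in place of $\sqrt{\lambda_1}$, where the Cauchy--Schwarz cost on the cross-term in $|\nabla\phi|^2$ now leaves a positive surplus $(\lambda_1 - (1+\eps)\alpha^2)\int\phi^2 f_E^2$ that absorbs the outer error and yields the preliminary estimate $\int_{E\setminus E(R)}f_E^2 \leq C_\alpha e^{-2\alpha R}$; this decay is then sufficient to show that the outer error in the sharp argument is negligible for large $R$, producing \eqref{eq11}.

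For \eqref{eq12}, the identity $\int |\nabla(\phi f_E)|^2\,\d\m = \int f_E^2|\nabla\phi|^2\,\d\m$ combined with Cauchy--Schwarz also yields the Caccioppoli inequality $\int\phi^2|\nabla f_E|^2\,\d\m \leq 4\int f_E^2|\nabla\phi|^2\,\d\m$. Applied to a unit-slope tent $\phi$ equal to $1$ on $E(R+1)\setminus E(R)$ and supported in $E(R+2)\setminus E(R-1)$, this reduces \eqref{eq12} to the bound \eqref{eq11} on the slightly larger annulus. Then \eqref{eq13} follows by decomposing $E(R)$ into unit annuli $E(k+1)\setminus E(k)$, bounding each one via \eqref{eq12} multiplied by the weight $e^{2\sqrt{\lambda_1}(k+1)}$ (which together with the $e^{-2\sqrt{\lambda_1}k}$ decay leaves a uniformly bounded contribution per annulus), and summing the resulting $O(R)$ terms.

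The main obstacle is producing the \emph{sharp} rate $\sqrt{\lambda_1}$ in \eqref{eq11}: a direct application of $(\star)$ gives decay at any rate strictly less than $\sqrt{\lambda_1}$ but not at the endpoint, and upgrading to the sharp rate requires both the precise cancellation on the exponential plateau (which forces the specific choice of $\rho_R$) and the bootstrap that uses the already-established suboptimal decay to tame the outer cutoff error. A subsidiary technical point, routine but needed, is verifying that $\phi f_E$ and $\phi^2 f_E$ are legitimate test functions, which comes from the bounded support of $\phi$, the vanishing of $\chi_0$ near $\partial E\subset K'$, and the local Sobolev regularity of $f_E$.
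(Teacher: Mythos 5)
The master inequality $(\star)$, the preliminary rate-$\alpha$ decay, the Caccioppoli derivation of \eqref{eq12}, and the annulus-summation for \eqref{eq13} all match the paper (Steps 0, 1, 5). The genuine gap is in the passage from the suboptimal decay to the sharp rate \eqref{eq11}. With your cutoff (exponential up to $R$, constant on $[R,R+1]$, linear taper on $[R+1,R+2]$), $(\star)$ gives, after the exact cancellation on the exponential region,
\[
\lambda_1 e^{2\sqrt{\lambda_1}R}\int_{E(R+1)\setminus E(R)}f_E^2\,\d\m \;\leq\; C \;+\; e^{2\sqrt{\lambda_1}R}\int_{E(R+2)\setminus E(R+1)}f_E^2\,\d\m .
\]
You claim the preliminary decay $\int_{E(R+2)\setminus E(R+1)}f_E^2\lesssim e^{-2\alpha R}$, $\alpha<\sqrt{\lambda_1}$, renders the outer error negligible; but $e^{2\sqrt{\lambda_1}R}\cdot e^{-2\alpha R}=e^{2(\sqrt{\lambda_1}-\alpha)R}\to\infty$, so the outer term actually \emph{diverges}, and (since the constant $C_\alpha$ blows up like $(1-\alpha/\sqrt{\lambda_1})^{-2}$) optimizing $\alpha$ only gets you to $O(R^2 e^{-2\sqrt{\lambda_1}R})$, not the endpoint. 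Nor can one iterate the recursion $\lambda_1 a_R\le Ce^{-2\sqrt{\lambda_1}R}+a_{R+1}$ directly: the multiplier $1/\lambda_1$ exceeds $1$ whenever $\lambda_1<1$, so the tail does not vanish, and the lemma is claimed for all $\lambda_1>0$.

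Two possible repairs. First, keep your Agmon weight but make the plateau length $L$ a free parameter, $[R,R+L]$, with the unit taper on $[R+L,R+L+1]$; then $(\star)$ reads $\lambda_1 e^{2\sqrt{\lambda_1}R}\int_{E(R+L)\setminus E(R)}f_E^2\le C + e^{2\sqrt{\lambda_1}R}\int_{E(R+L+1)\setminus E(R+L)}f_E^2$, and now the preliminary decay does kill the outer term as $L\to\infty$ with $R$ \emph{fixed}, giving directly $\int_{E\setminus E(R)}f_E^2\lesssim e^{-2\sqrt{\lambda_1}R}$. This is a genuine fix but not what you wrote. Second, the paper uses a structurally different cutoff: the test function is $\psi\,\xi\,f_E$ with $\xi=e^{\sqrt{\lambda_1}\di_p}$ and $\psi$ a piecewise-linear tent (up on $[R_0,R_1]$, down on $[R_1,R]$), and the argument hinges on the \emph{signed cross term} $2\sqrt{\lambda_1}\int\psi\,\xi^2\langle\nabla\psi,\nabla\di_p\rangle f_E^2\,\d\m$, which is favorable on the downhill leg. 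This produces the recursive inequality \eqref{eq15} for $F(R)=\int_{E(R)}\xi^2 f_E^2\,\d\m$, and — after $F(R)\le CR^2$, then $F(R)\le CR$, then the choice $t=2/\sqrt{\lambda_1}$ — the bound $F(R)-F(R-t)\le C+\tfrac13\bigl(F(R+t)-F(R)\bigr)$, which iterates with a \emph{universal} contraction $\tfrac13$ independent of $\lambda_1$. Your plateau has no cross term, which is precisely the ingredient your argument is missing. A smaller point: for the preliminary estimate the paper works with the compactly supported $f_R$ rather than $f_E$, which sidesteps the outer-cutoff error in Step 1 entirely; your surplus-absorption claim there would also require more care.
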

\begin{proof} Let $K\subset X$ be compact, put $K':=\overline{K^1}$ and, as before,  say that  $E$ is an end w.r.t.\ $K'$. Also, let $R_0>0$ and $p\in X$ fixed so that $K'\subset B_{R_0}(p)$, recall that we put $E(R):=E\cap B_R(p)$ and that the function $f_R\in W^{1,2}_0\cap C(B_R(p))$ has been defined after Theorem \ref{thm:exharm}.

\textit{Step 0.} We claim that for every \(g\in \Lip_{loc}(\X)\) which is identically 0 on $K'$ and for every \(R\) large enough it holds
\begin{equation}\label{eqharout}
\int_{E(R)}|\nabla(gf_R)|^2\,\d\m=\int_{E(R)}|\nabla g|^2f_R^2\,\d\m.
\end{equation}
Using the Leibniz and chain rules we compute
\begin{align*}
\int_{E(R)}\left|\nabla (gf_R)\right|^2\, \d\m &= \int_{E(R)}\left|\nabla g\right|^2f_R^2\,\d\m+\int_{E(R)}g^2 \left|\nabla f_R\right|^2\,\d\m+\int_{E(R)}f_R \left\langle\nabla (g^2),\nabla f_R\right\rangle \,\d\m.
\end{align*}
Now we notice that the integration by parts in
\[
\int_{E(R)}f_R \left\langle\nabla (g^2),\nabla f_R\right\rangle \,\d\m=-\int_{E(R)} g^2\divv (f_R \nabla f_R) \,\d\m=-\int_{E(R)} g^2| \nabla f_R|^2 \,\d\m
\]
is justified by the fact that $\partial(E(R))\subset (\partial K')\cup (\partial B_R(p))$ and the assumption $g\equiv0$ on $K'$ and $f_R\in W^{1,2}_0(B_R(p))$. The claim \eqref{eqharout} follows.

The same line of thought proves that
\begin{equation}\label{eqharout2}
\int_{E}|\nabla(gf_E)|^2\,\d\m=\int_{E}|\nabla g|^2f_E^2\,\d\m
\end{equation}
if $g$ is as above and moreover in $W^{1,2}(\X)$.

\textit{Step 1.} Let \(\xi:=\exp(\sqrt{\lambda_1}\di(\cdot,p))\). We claim that for some $C=C(E,K,\lambda_1)$ we have
\begin{equation}\label{eq14}
\int_E \xi^{2\delta} f_E^2\,\d\m\leq \frac{C}{(1-\delta)^2}\qquad\forall \delta\in(0,1).
\end{equation}
Let $\varphi\in \Lip\cap C_{b}(\X)$ be defined as $\varphi:=\hat\varphi\circ\di(\cdot,p)$, where
\[\hat\varphi(z):=\begin{cases}0 & \mbox{if }z\leq R_0, \\\frac{z-R_0}{R_0}  & \mbox{if } z\in[R_0,2R_0], \\ 1 & \mbox{if }z\geq 2R_0,\end{cases}\]
and let \(R>2R_0\). By \((\ref{eqharout})\) and the Cauchy-Schwarz inequality we have that
\begin{align*}
\int_{E(R)}\left|\nabla (\varphi\xi^\delta f_R )\right|^2 \,\d\m &=\int_{E(R)} |\nabla (\varphi\xi^\delta ) |^2f_R^2\, \,\d\m\\
&\leq (1+\varepsilon)\int_{E(R)}\varphi^2 |\nabla \xi^\delta |^2f_R^2\,\d\m+\left( 1+\tfrac{1}{\varepsilon}\right)\int_{E(R)}\xi^{2\delta}\left| \nabla\varphi\right|^2f_R^2\,\d\m\\
&\leq (1+\varepsilon)\delta^2\lambda_1\int_{E(R)} \varphi^2\xi^{2\delta}f_R^2\,\d\m+\left(1+\tfrac{1}{\varepsilon}\right)\tfrac{1}{R_0^2}\int_{E(2R_0)\setminus E(R_0)}\xi^{2\delta}f_R^2\,\d\m.
\end{align*}

By definition of \(\lambda_1\) we have $\lambda_1\int_{E(R)} \varphi^2\xi^{2\delta}f_R^2\,\d\m\leq \int_{E(R)} |\nabla (\varphi\xi^\delta f_R ) |^2\,\d\m$ and thus
\begin{align*}
&\lambda_1\left(1-(1+\varepsilon)\delta^2\right)\int_{E(R)} \varphi^2\xi^{2\delta}f_R^2\,\d\m \leq (1+\tfrac{1}{\varepsilon} )\tfrac{1}{R_0^2}\int_{E(2R_0)\setminus E(R_0)}\xi^{2\delta}f_R^2\,\d\m,
\end{align*}
and choosing \(\varepsilon=\frac{1-\delta}{\delta}\), recalling that $0\leq f_R\leq 1$ we obtain
\begin{equation}
\label{eqpt1}
\lambda_1(1-\delta)^2\int_{E(R)} \varphi^2\xi^{2\delta}f_R^2\,\d\m \leq \frac{1}{R_0^2}\int_{E(2R_0)\setminus E(R_0)}\xi^{2\delta}f_R^2\,\d\m\leq C.
\end{equation}

Since \(\varphi\) is positive and identically equal to \(1\) on \(E(R)\setminus E(2R_0)\), we have that
\[
\begin{split}
\int_{E(R)}\xi^{2\delta}f_R^2\,\d\m&=\int_{E(R)\setminus E(2R_0)}\xi^{2\delta}f_R^2\,\d\m+\int_{E(2R_0)}\xi^{2\delta}f_R^2\,\d\m\\
&\leq \int_{E(R)} \varphi^2\xi^{2\delta}f_R^2\,\d\m+C\stackrel{\eqref{eqpt1}}\leq \frac C{(1-\delta)^2}+C= \frac C{(1-\delta)^2}
\end{split}
\]
and letting $R\to\infty$ we get the claim \eqref{eq14}.

\textit{Step 2.} Put for simplicity 
\begin{equation}
\label{eq:defF}
F(R):=\int_{E(R)}\xi^2f_E^2\,\d\m\qquad \forall R>0.
\end{equation}
Let  \(R_0<R_1<R\). We claim that  for any \(t\in(0,R-R_1)\) it holds 
\begin{equation}
\label{eq15}
\tfrac{2\sqrt{\lambda_1}t}{(R-R_1)^2}(F(R-t)-F(R_1))\leq \tfrac{2\sqrt{\lambda_1}(R_1-R_0)+1}{(R_1-R_0)^2}(F(R_1)-F(R_0)) +\tfrac{1}{(R-R_1)^2}(F(R)-F(R_1))
\end{equation}
To see this let $\psi\in \Lip(\X)$ be defined as $\psi:=\hat\psi\circ \di_p$ (here and below $\di_p=\di(\cdot,p)$), where
\[
\hat\psi(z):=\begin{cases}\frac{z-R_0}{R_1-R_0}  & \mbox{if }z\in [R_0,R_1], \\ \frac{R-z}{R-R_1} & \mbox{if }z\in [R_1,R], \\ 0 & \mbox{otherwise}.\end{cases}\]
By definition of \(\lambda_1\), \eqref{eqharout2} and the identity $\nabla \xi=\sqrt{\lambda_1}\xi\nabla \di_p$ we get
\begin{align*}
\lambda_1\int_E \psi^2\xi^2f_E^2\,\d\m&\leq \int_E\left| \nabla\left( \psi\xi f_E \right) \right|^2\,\d\m=\int_E\left| \nabla\left( \psi\xi\right) \right|^2f_E^2\,\d\m\\
&= \int_E\left| \nabla \psi\right|^2 \xi^2f_E^2\,\d\m+\lambda_1 \int_E \psi^2 \xi^2 f_E^2\,\d\m+ 2\sqrt{\lambda_1}\int_E\psi\xi^2\langle \nabla\psi,\nabla \di_p\rangle f_E^2\,\d\m,
\end{align*}
that can be rewritten as
\[-2\sqrt{\lambda_1}\int_E\psi\xi^2\langle \nabla\psi,\nabla \di_p\rangle f_E^2\,\d\m\leq \int_E\left| \nabla \psi\right|^2 \xi^2f_E^2\,\d\m.\]
Using the explicit expression of $\psi$, this can be further rewritten as
\[
\begin{split}
&\frac{2\sqrt{\lambda_1}}{(R-R_1)^2}\int_{E(R)\setminus E(R_1)}(R-\di_p)\xi^2f_E^2\,\d\m\\
&\leq \frac{2\sqrt{\lambda_1}}{(R_1-R_0)^2} \int_{E(R_1)\setminus E(R_0)}\underbrace{(\di_p-R_0)}_{(\leq R_1-R_0\ \text{on}\ E(R_1)\setminus E(R_0))}\xi^2f_E^2\,\d\m\\
&\qquad\qquad+\frac{1}{(R_1-R_0)^2}\int_{E(R_1)\setminus E(R_0)}\xi^2f_E^2\,\d\m+\frac{1}{(R-R_1)^2}\int_{E(R)\setminus E(R_1)}\xi^2f_E^2\,\d\m\\
&\leq \text{Right Hand Side of \eqref{eq15}}.
\end{split}
\]
Then to get \eqref{eq15} notice that for  \(t\in (0,R-R_1)\) we have  \(t\leq R-\di_p\) on \(E(R-t)\setminus E(R_1)\).

\textit{Step 3.} We claim that there exists a constant \(C=C(E,K,\lambda_1)>0\) such that 
\begin{equation}\label{eq17}
F(R) \leq CR\qquad\text{ for all $R$ large enough and $F$ as in \eqref{eq:defF}}.
\end{equation}
To see this pick    $t=1$ and replace $R$ with $R+1$ in \eqref{eq15} to get, after easy manipulation, that 
\[
F(R)-F(R_0+1)\leq C R^2+\frac{1}{2\sqrt{\lambda_1}}(F(R+1)-F(R_0+1))
\]
(observe that $R_0$ depends only on $E$ and $K$).  By iteration we get
\begin{equation}
\label{eq:conk}
\begin{split}
F(R)-F(R_0+1)&\leq C\sum_{i=1}^k\frac{(R+i)^2}{2^{i-1}}+(2\sqrt{\lambda_1})^{-k}\big(F(R+k)-F(R_0+1)\big)\\
&\leq CR^2+(2\sqrt{\lambda_1})^{-k}\big(F(R+k)-F(R_0+1)\big)\qquad\forall k\in\N,\ k>0.
\end{split}
\end{equation}
Now notice that for any $\delta\in(0,1)$ we have
\[
\begin{split}
F(R+k)-F(R_0+1)\leq e^{2\sqrt{\lambda_1}(R+k)(1-\delta)}\int_{E(R+k)\setminus E(R_0+1)} \xi^{2\delta} f_E^2\,\d \m\stackrel{\eqref{eq14}}\leq \frac {Ce^{2\sqrt{\lambda_1}(R+k)(1-\delta)}}{(1-\delta)^2},
\end{split}
\]
thus picking $\delta$ so that $2\sqrt{\lambda_1}(1-\delta)<\log(2\sqrt{\lambda_1})$, by letting $k\to\infty$ in \eqref{eq:conk} we conclude that 
%
%
%
%
\begin{equation}
\label{eq:subottimo}
F(R)\leq CR^2.
\end{equation}
To improve this, pick $t=\frac R2$ and $R_1=R_0+1$ in \eqref{eq15} to obtain, after little manipulation, that
\[
\tfrac1R\big(F(\tfrac R2)-F(R_0+1)\big)\leq C\big(1+\tfrac1{R^2}(F(R)-F(R_0+1))\big)\stackrel{\eqref{eq:subottimo}}\leq C\qquad \forall R\text{ large enough.}
\]
This  is (equivalent to) our claim \eqref{eq17}.

\textit{Step 4.} We claim that   \((\ref{eq11})\) holds. To see this,  in \((\ref{eq15})\) pick   \(t=\frac{2}{\sqrt{\lambda_1}}\), $R+t$ in place of $R$ and \(R-t\) in place of $R_1$: with little manipulation  we deduce that
\[
\frac1{t^2}\big(F(R)-F(R-t)\big)\leq \underbrace{\frac CR\big(F(R-t)-F(R_0)\big)}_{\leq C\ \text{by \eqref{eq17}}}+\frac1{4t^2}\underbrace{\big(F(R+t)-F(R-t)\big)}_{=(F(R+t)-F(R))+(F(R)-F(R-t))}
\]
and thus that $F(R)-F(R-t)\leq C+\frac13(F(R+t)-F(R))$. Iterating we get
\[
\begin{split}
F(R)-F(R-t)\leq C\sum_{i=0}^{k-1}\frac1{3^i}+\frac1{3^k}(F(R+kt)-F(R+(k-1)t))\qquad \forall k\in \N, k>0.
\end{split}
\]
Recalling \eqref{eq17} and letting $k\to\infty$ we conclude that $F(R)-F(R-t)\leq C$ for all $R$ large enough (recall that here $t=\frac{2}{\sqrt{\lambda_1}}$). The claim \eqref{eq11} easily follows.

\textit{Step 5.} We claim that  \((\ref{eq12})\) and \((\ref{eq13})\) hold.

Let  $R>R_0+1$ and \(\zeta\in \Lip_{bs}(\X)\) be defined as $\zeta:=\hat\zeta\circ\di(\cdot,p)$, where
\[\hat \zeta(z):=\begin{cases} z-(R-1)  & \mbox{if }z\in[R-1,R] \\ 1 & \mbox{if }z\in[R,R+1] \\ R+2-z & \mbox{if }z\in[R+1,R+2] \\ 0 & \mbox{otherwise }
\end{cases}\]
Then using that  \(f_E\) is harmonic we get the standard estimate
\begin{align*}
\int_E\zeta^2|\nabla f_E|^2\,\d\m &=-2\int_E\zeta f_E\langle \nabla\zeta,\nabla f_E\rangle \,\d\m\leq \frac{1}{2}\int_E\zeta^2|\nabla f_E|^2\,\d\m+2\int_E |\nabla \zeta|^2f_E^2\,\d\m
\end{align*}
that gives $\int_E\zeta^2|\nabla f_E|^2\,\d\m \leq 4\int_E |\nabla \zeta|^2f_E^2\,\d\m\leq 4\int_{E(R+2)\setminus E(R-1)}f_E^2\,\d\m$. Thus
\[
\int_{E(R+1)\setminus E(R)}|\nabla f_E|^2\,\d\m\leq \int_E\zeta^2|\nabla f_E|^2,\d\m \leq  4\int_{E(R+2)\setminus E(R-1)}f_E^2\d\m\stackrel{\eqref{eq11}}\leq Ce^{ -2\sqrt{\lambda_1}R},
\]
as desired. Then \eqref{eq13} is a direct consequence of \eqref{eq12}.
\end{proof}
These estimates allow us to deduce the following important dichotomy result. In proving it we shall need some technical tool about calculus on $\RCD$ spaces. In particular, we shall use the   \textit{Gauss-Green formula} proved in  \cite{BPS19} (that in turn relies on the machinery developed in \cite{DGP19} to speak about trace of a Sobolev vector field at the boundary of a set). We shall also need the coarea formula, that is well known to be valid even in the metric setting, see  \cite{BPS19}, \cite{BCM22} and references therein (starting from the original \cite{Mir03}, where the first instance of the coarea formula in the metric setting has been obtained).

%
%
%
\begin{thm}\label{thm14}
Let \((\X,\di,\m)\) be an \(\RCD(K,N)\) m.m.s.\ and let \(E\) be an end of \(\X\). Assume that \(\lambda_1>0\). Then exactly one of the following holds:
\begin{enumerate}
	\item There exists a bounded non-constant harmonic function on \(E\) and \(E\) has exponential volume growth. More precisely:
	\begin{equation}\label{stimavolumiinfiniti}
	V_E(R)\geq C\exp\left(2\sqrt{\lambda_1}R\right)
	\end{equation}		
	for every \(R\) large enough and some constant \(C>0\).
		\item Every bounded harmonic function on \(E\) is constant and \(E\) has exponential volume decay. More precisely:
	\begin{equation}\label{stimavolumifiniti}
	V_E(\infty)-V_E(R)\leq C\exp\left(-2\sqrt{\lambda_1}R\right)
	\end{equation}		
		for every \(R\) large enough and some constant \(C>0\).
\end{enumerate}
\end{thm}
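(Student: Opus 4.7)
The proof will hinge on the harmonic function $f_E$ from Definition~\ref{def:fE}, which takes values in $[0,1]$, is harmonic on $E$, and equals $1$ on $\partial K'$. Either $f_E\equiv 1$ on $E$, or $f_E$ is a bounded non-constant harmonic function on $E$, and I will show that these possibilities correspond to alternatives (2) and (1) of the statement respectively. The two cases are manifestly exclusive and exhaustive.

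\textbf{Case $f_E\equiv 1$.} Substituting $f_E=1$ into \eqref{eq11} of Lemma~\ref{lem1} gives $\m(E(R+1)\setminus E(R))\leq C e^{-2\sqrt{\lambda_1}R}$ for $R$ large; summing a geometric series in $R$ immediately yields the decay \eqref{stimavolumifiniti}, and in particular $V_E(\infty)<\infty$. For the Liouville-type claim, given any bounded harmonic $u$ on $E$, a Caccioppoli estimate of the form $\int \phi^2|\nabla u|^2 \leq 4\|u\|_\infty^2\int|\nabla\phi|^2$ (valid for $\phi\in\Lip_{bs}(\X)$ vanishing near $\partial K'$) combined with the exponential tail decay just established allows one to build cutoffs $\phi_R$ whose Dirichlet energy $\int|\nabla\phi_R|^2$ tends to $0$ as $R\to\infty$, forcing $\int_E|\nabla u|^2=0$ so that $u$ is constant.

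\textbf{Case $f_E$ non-constant.} Here $f_E$ itself provides the non-constant bounded harmonic function required by alternative (1); only the sharp volume bound \eqref{stimavolumiinfiniti} remains. Following the Agmon-type strategy of Li--Wang, one applies the first-eigenvalue inequality $\lambda_1\int v^2\leq\int|\nabla v|^2$ to weighted test functions $v = e^{\alpha\,\di(\cdot,p)}\chi_R$, with $\alpha$ slightly below $\sqrt{\lambda_1}$ and $\chi_R\in\Lip_{bs}(\X)$ a cutoff equal to $1$ on $B_R(p)$ and supported in $B_{R+1}(p)$. Young's inequality applied to $|\nabla v|^2$ yields
\[
\bigl(\lambda_1-(1+\eps)\alpha^2\bigr)\int_{E(R)} e^{2\alpha\,\di(\cdot,p)}\,\d\m \ \leq\ \bigl(1+\tfrac{1}{\eps}\bigr)\,e^{2\alpha(R+1)}\,\m(E(R+1)\setminus E(R)).
\]
The left-hand side is bounded below via the coarea formula in terms of $V_E(R)$. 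Running this estimate along $R\to\infty$ with $\alpha\uparrow\sqrt{\lambda_1}$ suitably and carefully tracking the blow-up of $(\lambda_1-(1+\eps)\alpha^2)^{-1}$ produces the sharp exponential lower bound $V_E(R)\geq Ce^{2\sqrt{\lambda_1}R}$ of \eqref{stimavolumiinfiniti}.

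\textbf{Main obstacle.} The principal difficulty is the sharp exponent $2\sqrt{\lambda_1}$ in Case~B. As $\alpha\uparrow\sqrt{\lambda_1}$ the coefficient $\lambda_1-(1+\eps)\alpha^2$ degenerates to $0$, so one must balance the deterioration of constants against the exponential margin available in the annular right-hand side; this calls for a joint choice of $\alpha$ and $\eps$ depending on $R$. The Liouville-type statement in Case~A is also technically delicate, since bounded harmonic functions on $E$ carry no prescribed boundary data on $\partial K'$, and the construction of the cutoffs $\phi_R$ must fully exploit the exponentially decaying tail volume already obtained.
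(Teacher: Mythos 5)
Your identification of $f_E$ as the organizing object and your handling of the volume tail in Case $f_E\equiv 1$ (plug into \eqref{eq11} and sum) match the paper. The two other pieces, however, have genuine gaps.

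\textbf{Case B (exponential volume growth).} The weighted Rayleigh-quotient argument you sketch never uses $f_E$ at all. That cannot work: a pure first-eigenvalue inequality applied to $v=e^{\alpha\di(\cdot,p)}\chi_R$ yields, after Young,
\[
\big(\lambda_1-(1+\eps)\alpha^2\big)\int_{E(R)}e^{2\alpha\di(\cdot,p)}\,\d\m \leq C_\eps\, e^{2\alpha(R+1)}\,\m\big(E(R+1)\setminus E(R)\big),
\]
which bounds the annular volume \emph{from below} by a quantity of order $e^{-2\alpha R}V_E(R)$; summing such bounds only shows $V_E(R)$ stays bounded away from zero, not that it grows exponentially. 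Indeed, if this argument worked it would apply equally in the alternative where $E$ has finite volume — a contradiction. The paper's proof instead exploits the non-constant $f_E$ through a \emph{flux} argument: by the Gauss--Green formula, $F(r):=\int\la\nabla f_E,\nu_{E(r)}\ra\,\d\Per(E^r,\cdot)$ is constant in $r$; a careful cutoff construction shows $F(\bar R)\neq 0$, hence $\int|\nabla f_E|\,\d\Per(E^r,\cdot)\geq c>0$ for all $r$. Combined with Cauchy--Schwarz, the coarea formula (so that $V_E(R+1)-V_E(R)=\int_R^{R+1}\Per(E^r,\X)\,\d r$), Jensen, and the gradient-decay estimate \eqref{eq12}, one gets $V_E(R+1)-V_E(R)\geq c'e^{2\sqrt{\lambda_1}R}$. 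Both the flux lower bound and \eqref{eq12} are indispensable and are absent from your sketch.

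\textbf{Case A (Liouville).} The Caccioppoli cutoffs $\phi_R$ must vanish in a neighbourhood of $\partial K'$, and that portion of $\int|\nabla\phi_R|^2$ does not tend to zero — only the far-away annular contribution does. So you cannot force $\int_E|\nabla u|^2=0$ this way; at best you get $u$ has finite Dirichlet energy away from $\partial K'$, which does not by itself give constancy. The paper bypasses this entirely by taking the tautological dichotomy (``every bounded harmonic on $E$ is constant'' vs.\ ``some bounded non-constant harmonic exists'') and, in the second branch, proving by a maximum-principle comparison that such a $u$ forces $f_E$ to be non-constant. Its contrapositive is exactly the Liouville statement you want: $f_E\equiv 1$ implies all bounded harmonics on $E$ are constant. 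If you want to keep your dichotomy on $f_E$, you should borrow that comparison argument rather than attempt a Caccioppoli cutoff.
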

\begin{proof}\ \\ 
\textit{Step 1.} Assume that every bounded harmonic function on \(E\) is constant. Then this is the case for the function $f_E$ given by Definition \ref{def:fE}. We have then that \(f_E\) is equal to \(1\) a.e. on \(E\), and by \((\ref{eq11})\) follows that
\[V_E(R+1)-V_E(R)\leq C\exp\left(-2\sqrt{\lambda_1}R\right).\]
Summing up we obtain
\[V_E(\infty)-V_E(R)\leq C\sum_{i=0}^{+\infty}\exp\left(-2\sqrt{\lambda_1}(R+i)\right)\leq C\exp\left(-2\sqrt{\lambda_1}R\right).\]

\textit{Step 2.} Assume now that there exists a bounded non-constant harmonic function $u$ on \(E\). By translation and rescaling we can assume that $u\geq 0$ on $\X$, $u\geq 1$ on $K'$ and $u<1$ somewhere in $E$ (recall \eqref{eq:kp}). Then by the maximum principle and the construction of $f_E$ it follows that $f_E$ is not constant as well.

Now notice that by the spherical version of the Bishop-Gromov inequality we have that $B_R(p)$ has finite perimeter for any $R>0$. It follows that for any $R>R_0$ the set $E^R:=E\setminus E(R)$ has finite perimeter with $\Per(E^R,\cdot)$ being the restriction of $\Per( {B_R(p)},\cdot)$ to $E$.

We claim that there is $c>0$ such that
\begin{equation}
\label{eq:claimper}
\int |\nabla f_E|\, \d\Per(E^r,\cdot)\geq c\qquad\forall r>R_0,
\end{equation}
where here, with a slight abuse of notation, we are denoting by $\nabla f_E\in L^0(T \X,\Per(E^r,\cdot))$ the trace of  $\nabla f_E\in W^{1,2}_{C,loc}(E)$ in the sense of \cite{DGP19} and \cite{BPS19}. 

Let us show how from \eqref{eq:claimper} we can conclude. Putting $P(r):=\Per(E^r,\X)$, by \eqref{eq:claimper} and the Cauchy-Schwarz inequality we immediately have   $\tfrac{1}{P(r)}\leq \frac1c \int |\nabla f_E|^2\, \d\Per(E^r,\cdot)$ and therefore
\begin{equation}
\label{eq:calcolocontraccia}
\begin{split}
1&\leq \int_R^{R+1}P(r)\,\d r \int_R^{R+1}\tfrac1{P(r)}\,\d r\\
&\stackrel*\leq\tfrac1c\big( V_E(R+1)-V_E(R)\big)\int_{E(R+1)\setminus E(R)} |\nabla f_E|^2\,\d \m\\
\text{(by \eqref{eq12})}\qquad&\leq C \big( V_E(R+1)-V_E(R)\big)e^{-2R\sqrt{\lambda_1}},
\end{split}
\end{equation}
where in the starred inequality we used the coarea formula to deduce $V_E(R+1)-V_E(R)=\m(E(R+1)\setminus E(R))=\int_{E(R+1)\setminus E(R)}|\nabla {\sf d}_p|\, \d\m=\int_R^{R+1}P(r)\,\d r$ and the fact that, if we call $\tilde\nabla f_E\in L^0_{Cap}(T\X)$ the quasi-continuous representative of $\nabla f_E\in W^{1,2}_{C,loc}(E)$ (set to 0 outside $E$, say) and $\tr_r \nabla f_E$ the trace of $\nabla f_E$ in $L^0(T \X,\Per(E^r,\cdot))$, then, by definition,  $\tr_r \nabla f_E$ is the equivalence class of $\tilde\nabla f_E$ in  in $L^0(T \X,\Per(E^r,\cdot))$ and thus we have $|\tr_r \nabla f_E|=|\tilde\nabla f_E|$ $\Per(E^r,\cdot)$-a.e.. This and the trivial identity $|\tilde\nabla f_E|=|\nabla f_E|$ $\m$-a.e.\ justifies the computation above. 

Since \eqref{eq:calcolocontraccia} is equivalent to the claim, we are left to prove \eqref{eq:claimper}. 
Since \(f_E\) is harmonic, putting $F(r):= \int\langle \nabla f_E,\nu_{E(r)}\rangle \,\d\Per(E^r,\cdot)$ for brevity,  by the Gauss-Green formula we have that, for every \(r_2>r_1>R_0\) it holds
\[
\begin{split}
F(r_2)-F(r_1)=\int  \langle\nabla f_E,\nu_{E(r_2)\setminus E(r_1)}\rangle \,\d\Per(E(r_2)\setminus E(r_1),\cdot)=-\int_{E(r_2)\setminus E(r_1)}\Delta f_E\,\d\mm=0,
\end{split}
\]
thus $r\mapsto F(r)$ is constant on $(R_0,+\infty)$ and since clearly $|F(r)| \leq \int| \nabla f_E| \d\Per(E^r,\cdot)$, to get  \eqref{eq:claimper} it suffices to prove that $F(r)\neq 0$  for some $r>R_0$.

To see this, fix $\bar R>R_0$ and notice that by the strong maximum principle and the fact that $f_E$ is not constant, there must be $ a>0$ such that  $\sup_{\partial E^{\bar R} }f_E\leq 1-a$. In particular, taking into account the continuity of $f_E$ and  the Sobolev-to-Lipschitz property it easily follows that there is $b\in(0,\tfrac { a}2)$ such that
\begin{equation}
\label{eq:gradnonzero}
\int_{E\cap B_{\bar R}(p)\cap\{ 1-2b<f_E<1-b\}}|\nabla f_E|^2\,\d\m>0.
\end{equation}
Put $\rho:=\varphi\circ f_E$ where
\[
\varphi(z):=\left\{\begin{array}{l}
1,\quad\text{ if }z\leq 1-2b,\\
0,\quad\text{ if }z\geq 1-b,\\
\text{affine and continuous on  }[1-2b,1-b].\\
\end{array}
\right.
\]
and notice that we have
\[
\int_{E(\bar R)}\divv(\rho\nabla f_E)\,\d\m=\int_{E(\bar R)}\nabla\rho\cdot\nabla f_E\,\d\m=-\frac1b
\int_{E\cap B_{\bar R}(p)\cap\{ 1-2b<f_E<1-b\}}|\nabla f_E|^2\,\d\m\stackrel{\eqref{eq:gradnonzero}}<0.
\]
We now claim that there is a finite perimeter set $\tilde E$ such that $\tilde E\Delta E(\bar R)\subset f^{-1}_E((1-b,1))$. Assuming we have such $\tilde E$, we conclude with
\[
\begin{split}
0>\int_{E(\bar R)}\divv(\rho\nabla f_E)\,\d\m=\int_{\tilde E}\divv(\rho\nabla f_E)\,\d\m=\int \rho\langle \nabla f_E,\nu_{\tilde E}\rangle\,\d\Per(\tilde E,\cdot)=-F(\bar R),
\end{split}
\]
where in the last step we used the fact that $\rho\equiv 1$  on $\partial E^{\bar R}=\partial\tilde E\cap f_E^{-1}((0,1-b])$ and $\rho\equiv0$ on $\partial\tilde E\setminus f_E^{-1}((0,1-b]).$ It thus remains to show the existence of such $\tilde E$. To see this, recall that balls have finite perimeter (by the spherical version \eqref{eq:BG} of the Bishop-Gromov inequality) and that $\partial E$ is compact. Then using the continuity of $f_E$ we can find  $r>0$ such that for any $x\in\partial E$ we have $B_r(x)\subset \{f_E\geq 1-b\}$ and by compactness a finite number of points $x_1,\ldots,x_n$ such that $\partial E\subset \cup_iB_r(x_i)$. It is then clear that the set $\tilde E:=E\setminus  \cup_iB_r(x_i)$ does the job.
\end{proof}
As a direct consequence of the above result we obtain the following:
\begin{cor}\label{thm2ends}
Let \((\X,\di,\m)\) be an \(\RCD(K,N)\) space with \(\lambda_1>0\) and with at  least two ends with infinite volume. 

Then there exists a bounded non-constant harmonic function on \(\X\).
\end{cor}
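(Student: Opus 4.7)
Let $E_1, E_2$ be the two infinite-volume ends with respect to some compact $K \subset \X$, and set $K' := \overline{K^1}$ as in \eqref{eq:kp}. The first step is a reduction: apply Theorem \ref{thm14} to each $E_i$. Since $V_{E_i}(\infty) = +\infty$, alternative (2) is ruled out by the decay estimate \eqref{stimavolumifiniti}, so alternative (1) must hold for each end, i.e.\ each $E_i$ has $V_{E_i}(R) \geq C e^{2\sqrt{\lambda_1} R}$ and the function $f_{E_i}$ from Definition \ref{def:fE} is a bounded non-constant harmonic function on $E_i$. The decay estimates of Lemma \ref{lem1} apply to $f_{E_i}$ on each $E_i$.

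The second step is to produce a bounded harmonic function on all of $\X$ by a compact-exhaustion argument. Fix $p \in K'$ and, for each large $R$, choose a Lipschitz datum $\varphi_R : \X \to [0,1]$ which equals $1$ on $E_1 \setminus B_{R-1}(p)$ and equals $0$ outside $E_1 \setminus B_{R-1}(p) \cup K'$ (such a function exists by cutting off the indicator $\chi_{E_1}$ away from $\partial E_1$ and $\partial B_R(p)$). Let $u_R \in W^{1,2}(B_R(p))$ be the unique minimizer of the Dirichlet energy with $u_R - \varphi_R \in W^{1,2}_0(B_R(p))$, so that $u_R$ is harmonic on $B_R(p)$. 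The maximum principle gives $0 \leq u_R \leq 1$, and applying the Cheng-Yau estimate \eqref{chengyau} to both $u_R$ and $1-u_R$ on balls compactly contained in $B_R(p)$ yields uniform local Lipschitz bounds. A diagonal extraction produces a subsequential locally uniform limit $u : \X \to [0,1]$ which is a bounded harmonic function on all of $\X$.

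The third and crucial step is to show that $u$ is non-constant, and this is the main obstacle. The strategy is to exploit that by construction, $u_R$ is pulled toward $1$ at infinity along $E_1$ and toward $0$ at infinity along $E_2$, so the limit $u$ should satisfy $u|_{E_1} \not\equiv u|_{E_2}$. One route is a flux/Gauss-Green argument in the spirit of \eqref{eq:claimper}--\eqref{eq:calcolocontraccia}: if $u$ were constant, then for every $r$ large the flux $\int \langle \nabla u, \nu\rangle \, \d\Per(B_r(p),\cdot)$ restricted to $E_1$ would vanish, but comparing $u_R$ to $f_{E_1}$ on $E_1$ (both bounded harmonic, with matching ``boundary traces''  in a limiting sense) together with the quantitative decay \eqref{eq11}--\eqref{eq13} for $1-u_R$ on $E_1$ forces a strictly positive asymptotic flux through the outer part of $E_1$, yielding a contradiction. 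An alternative route is to show directly that $u \geq 1 - f_{E_2}$ on $E_2$ and $u \leq f_{E_1}$ on $E_1$ (or similar comparisons obtained by maximum principle from the barriers $f_{E_i}$), combined with the non-constancy of the $f_{E_i}$'s on their respective ends. Either way, the heart of the difficulty is passing from the compact-exhaustion construction of Paragraph 2 to the quantitative separation of behaviour of $u$ on the two distinct infinite-volume ends, leveraging that the hypothesis $\lambda_1>0$ combined with infinite volume forces the exponential volume growth \eqref{stimavolumiinfiniti} that prevents the harmonic function from flattening out.
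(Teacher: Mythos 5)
Your overall architecture matches the paper's: pass to the enlarged compact $K'$, use Theorem~\ref{thm14} to make each $f_{E_i}$ a non\nobreakdash-constant bounded harmonic function on its end, solve a Dirichlet problem on $B_R(p)$ with data $1$ near $\partial B_R\cap E_1$ and $0$ near $\partial B_R\cap E_2$, then use the Cheng--Yau gradient estimate~\eqref{chengyau} plus Arzel\`a--Ascoli to extract a bounded harmonic limit $u$ on $\X$. (The paper is a bit more careful here: it uses a variant of Theorem~\ref{thm:exharm} with collar sets $C_i$ to ensure the approximating harmonic function is continuous up to the boundary, which is needed when invoking the maximum principle; your Dirichlet-energy minimizer $u_R$ would need the same justification.)

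Where the proposal is deficient is exactly in Step 3, the non\nobreakdash-constancy of $u$. Your route (a), the flux argument, is only sketched and relies on vague ``matching boundary traces in a limiting sense''; it is not worked out and I would not accept it as written. Your route (b) has the comparison inequalities reversed: with $u_R\equiv 1$ on $\partial B_R\cap E_1$ and $u_R\equiv 0$ on $\partial B_R\cap E_2$, the maximum principle gives $u_R\leq f_{E_2}$ on $B_R\cap E_2$ (both harmonic; $f_{E_2}=1\geq u_R$ on $\partial E_2$ and $f_{E_2}\geq 0=u_R$ on the outer sphere) and $u_R\geq 1-f_{E_1}$ on $B_R\cap E_1$, not the inequalities you wrote. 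More seriously, even after correcting the signs, the comparisons $u\leq f_{E_2}$ on $E_2$ and $u\geq 1-f_{E_1}$ on $E_1$ only give $\inf u\leq\inf_{E_2}f_{E_2}$ and $\sup u\geq 1-\inf_{E_1}f_{E_1}$; to conclude $u$ is non\nobreakdash-constant you must still prove $\inf_{E_i}f_{E_i}=0$. This does not follow merely from the non\nobreakdash-constancy of the $f_{E_i}$'s, as you seem to suggest; the paper proves it by a separate maximum-principle argument (if $a:=\inf_{E_1}f_{E_1}>0$, the rescaled barrier $\tilde f_1:=1-\tfrac{1-f_{E_1}}{1-a}$ dominates every approximant of $f_{E_1}$ and hence $f_{E_1}$ itself, yet $\tilde f_1<f_{E_1}$ wherever $f_{E_1}<1$, a contradiction). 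This step is missing from your proposal and is the genuine gap.
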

\begin{proof}  Let $E_1,E_2$ be the two given ends with infinite volume and   consider the associated functions \(f_{E_1},f_{E_2}\) as in Definition \ref{def:fE} (possibly enlarging $R_0$ we can take $K':=B_{R_0}(p)$), which by {Theorem} \ref{thm14} are bounded non-constant and harmonic functions. Recall that   \(f_{E_i}\equiv 1\) on \(\partial E_i\) and  \(\text{ess}\inf f_{E_i}=0\) for \(i=1,2\).

Then fix $R>R_0+1$ and let $C_i:=\{x\in B_R(p):{\sf d}(x,E_i\cap\partial B_{R+1}(p))\geq 1\}$. Then a simple variant of  Theorem \ref{thm:exharm} gives the existence of a function $f_R\in (W^{1,2}_0\cap C)(B_{R+2}(p))$ with values in $[0,1]$ that is harmonic in $B_{R+2}(p)\setminus (C_1\cup C_2)\supset B_R(p)$, equal to 1 on $C_1$ and equal to 0 on $C_2$. 

We claim that on $B_R\cap E_2$ we have $f_R\leq f_{E_2}$. To see this notice that $\partial(B_R\cap E_2)$ is contained in the disjoint union of $\partial E_2$ and $C_2$ and by construction we have $f_{E_2}=1$ on $\partial E_2$  and $f_R=0$ on $C_2$. Since both functions have values in $[0,1]$, are harmonic in  $B_R\cap E_2$ and continuous up to the boundary of such set, by the maximum principle our claim follows. 

Analogously we can prove that $f_R\geq 1-f_{E_1}$ on $B_R\cap E_1$. We now want to send $R\uparrow+\infty$ and find, possibly after passing to a subsequence, a limit harmonic function on the whole $X$. This is possibile thanks to the Lipschitz estimates  \eqref{chengyau}, that grant that for some $C(R)$ the Lipschitz constant of $f_{R'}$ on $B_{R/2}(p)$ is bounded from above by $C(R)$ for any $R'>R$. By Arzel\'a-Ascoli's theorem, this suffices to find a limit function $f$ and it is then easy to see that this function is harmonic and bounded (in fact with values in $[0,1]$).

Also, a direct consequence of the construction and of the previous claims is that $f\leq f_{E_2}$ on $E_2$ and $f\geq 1-f_{E_1}$ on $E_1$. If we knew that $\inf_{E_i}f_{E_i}=0$ for $i=1,2$ the conclusion would directly follow, as $\inf f\leq \inf_{E_2}f_{E_2}=0$ and $\sup f\geq \sup_{E_1}1-f_{E_1}=1-\inf_{E_1}f_{E_1}=1$, proving that $f$ is not constant. 

Thus it remains to show that $\inf_{E_1}f_{E_1}=0$ (the argument for $E_2,f_{E_2}$ being analogous). Say not. Then by construction $a:=\inf_{E_1}f_{E_1}>0$. Define the new function $\tilde f_1:=1-\tfrac{1-f_{E_1}}{1-a}$ and notice that it is still harmonic and continuous on $E_1$ with boundary value equal to $1$ and that it is still positive. Thus by the maximum principle the function $\tilde f_1$ bounds from above each of the harmonic functions defined on $B_R(p)\cap E_1$ that are used in the definition of $f_{E_1}$. It follows by the definition of $f_{E_1}$ that it would hold $f_{E_1}\leq \tilde f_1$. This, however, contradicts the definition of $\tilde f_1$, as this ensures that $\tilde f_1<f_{E_1}$ on $E_1$.
\end{proof}

This last corollary should be coupled with the following simple and general result, stating that if $\lambda_1>0$ (an assumption that is present in Theorem \ref{mainthm}), then the space has at least one end of infinite volume:
\begin{prop}\label{prop:1end}
Let \((\X,\di,\m)\) be a metric measure space with  \(\lambda_1>0\). Then \(\X\) has at least one end with infinite volume.
\end{prop}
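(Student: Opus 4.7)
The plan is to argue by contradiction: assuming every end of $\X$ has finite volume, we will exhibit a sequence of functions in $W^{1,2}(\X)$ whose Rayleigh quotient tends to zero, contradicting $\lambda_1>0$. The first move is to rule out the case $\m(\X)<\infty$: in that case the constant function $1\in W^{1,2}(\X)$ satisfies $|\d 1|\equiv 0$ and $\int 1^2\,\d\m=\m(\X)>0$, which already yields $\lambda_1=0$. Hence we may assume $\m(\X)=\infty$.

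Next, I would fix $p\in\X$ and a compact set $K\ni p$ chosen large enough that the unbounded connected components of $\X\setminus K$ are ends of $\X$. Writing $\X\setminus K=U\sqcup V$ with $U$ the union of the (end) unbounded components and $V$ the union of the bounded ones, the fact that $\m(K)<\infty$ and $\m(\X)=\infty$ forces at least one of $\m(U), \m(V)$ to be infinite. Since by hypothesis each end has finite measure, in either case one extracts a countable family $\{A_\ell\}$ of pairwise disjoint open connected components of $\X\setminus K$ with $\m(A_\ell)<\infty$ for every $\ell$ and $\sum_\ell\m(A_\ell)=\infty$.

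The core construction is then the cutoff
\[
\phi_n(x)\;:=\;\min\!\Bigl(1,\tfrac{\di(x,K)}{\eps}\Bigr)\cdot\mathbf{1}_{A_1\cup\cdots\cup A_n}(x),\qquad \eps>0 \text{ fixed.}
\]
Since each $A_\ell$ is open and closed in $\X\setminus K$, and $\phi_n$ vanishes on $K$, the function $\phi_n$ is $\tfrac1\eps$-Lipschitz and, because $\m(A_\ell)<\infty$, it lies in $W^{1,2}(\X)$. Its gradient is supported in the collar $K^\eps\setminus K:=\{x:0<\di(x,K)<\eps\}$, yielding the $n$-independent bound
\[
\int_\X|\nabla\phi_n|^2\,\d\m\;\leq\;\frac{\m(K^\eps\setminus K)}{\eps^2},
\]
while $\phi_n\equiv 1$ on $(A_1\cup\cdots\cup A_n)\setminus K^\eps$ gives
\[
\int_\X\phi_n^2\,\d\m\;\geq\;\sum_{\ell=1}^n\m(A_\ell)-\m(K^\eps)\;\xrightarrow[n\to\infty]{}\;\infty.
\]
The Rayleigh quotient of $\phi_n$ therefore tends to zero, providing the desired contradiction.

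The principal obstacle is arranging the decomposition so that the unbounded components of $\X\setminus K$ are genuinely ends (rather than themselves splitting into further ends): when $\X$ has only finitely many ends this is immediate, and in general it can be handled either by enlarging $K$ iteratively until an infinite collection of finite-volume ends with divergent total mass is exposed, or by appealing directly to the bounded components, which are automatically of finite measure and for which the same cutoff construction applies without modification.
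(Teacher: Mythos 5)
Your proof is correct and follows the same overall strategy as the paper's---dispose of the case $\m(\X)<\infty$ with the constant function, then, assuming every end has finite volume, exhibit a family of $W^{1,2}$ functions whose Rayleigh quotient tends to $0$---but the test function you build is genuinely different. The paper's $f_\varepsilon$ equals $1$ on $K$ and on a large part of each end, decays linearly to $0$ across a thin annulus in each end, and vanishes beyond; the annuli are chosen, using $V_{E_i}(\infty)<\infty$, so that $\int|\d f_\varepsilon|^2\leq\varepsilon$ while $\int f_\varepsilon^2\geq\m(K)$ stays fixed. Since $f_\varepsilon$ is a piecewise-affine profile composed with $\di(\cdot,p)$, its Lipschitz constant is transparent. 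Your $\phi_n$ inverts the picture: it vanishes on $K$, ramps up to $1$ in a collar of width $\eps$, and is cut off to $n$ components of $\X\setminus K$, so the numerator $\int|\d\phi_n|^2\leq \m(K^\eps\setminus K)/\eps^2$ is fixed and the denominator is pushed to infinity. Both choices work; yours has the mild appeal that the smallness of the quotient comes for free from the divergence of mass rather than from an $\varepsilon$-tuned annulus.

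Two points deserve more care. First, the claim that $\phi_n$ is globally $\tfrac1\eps$-Lipschitz is not automatic in a general (non-geodesic) metric measure space: two points in distinct components of $\X\setminus K$ can be metrically close while carrying $\phi_n$-values $0$ and $1$. What is true, and what you are implicitly using, is that $A_1\cup\cdots\cup A_n$ is clopen in $\X\setminus K$, so its topological boundary sits inside $K$, where the collar factor vanishes, and any continuous curve crossing between distinct components must pass through $K$; this is exactly what makes $\tfrac1\eps\mathbf 1_{K^\eps\setminus K}$ a weak upper gradient of $\phi_n$, which is what $W^{1,2}$-membership requires. Phrasing the estimate at the level of (weak) upper gradients along curves, rather than a global Lipschitz bound, would make this step airtight. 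Second, the obstacle you flag at the end---ensuring the unbounded components of $\X\setminus K$ really are ends (and that there are countably many of them with controllable mass)---is the same point the paper's proof glosses over with the bare assertion that ``there exists a compact $K$ which cuts the space in ends with finite volume.'' Your fall-backs (iteratively enlarging $K$, or working with the bounded components) do not obviously terminate for an arbitrary metric measure space. Since both proofs share this gap and the proposition is only invoked for $\RCD(K,N)$ spaces with $N<\infty$, where the end structure is much better behaved, this is not a defect peculiar to your argument, but your identification of it as the principal difficulty is correct and is worth recording.
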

\begin{proof}
By the definition of \(\lambda_1\) it follows immediately that if \(\m(\X)<\infty\) then \(\lambda_1=0\), this means that \(\m(\X)=\infty\).

Assume by contradiction that \(\X\) has no ends with infinite volume. Then there exists a compact \(K\) which has positive volume and cuts the space in ends with finite volume. Let \(\{E_i\}_{i\in\N}\) be the set of the ends with respect to \(K\). Since \(\m(E_i)<\infty\) for every \(i\in\N\) then for every \(\varepsilon>0\) there exist a couple of radii \(r_{i,\varepsilon}\), \(R_{i,\varepsilon}>r_{i,\varepsilon}+2\) such that \(V_{E_i}(R_{i,\varepsilon})- V_{E_i}(r_{i,\varepsilon})\leq\frac{\varepsilon}{2^i}\). Taking now the function 
\[
f_\varepsilon(x):=\begin{cases} 1 & \mbox{for } x\in K\cup\left(\bigcup(E_i(r_{i,\varepsilon}))\right), \\ \frac{\di(x,p)-r_{i,\varepsilon}}{R_{i,\varepsilon}-r_{i,\varepsilon}} & \mbox{for } x\in E_i(R_{i,\varepsilon})\setminus E_i(r_{i,\varepsilon}), \\ 0 & \mbox{for } x\in E_i\setminus E_i(R_{i,\varepsilon}),\end{cases}\]
we conclude that
\[\frac{\int_X |\nabla f_\varepsilon|^2 \,\d\m}{\int_X|f_\varepsilon|^2\,\d\m}\leq \frac{\varepsilon}{\m(K)}\xrightarrow[\varepsilon\rightarrow 0]{}0.\]
\end{proof}

\subsection{Properties of the bounded harmonic  function}\label{chbusemann}
Let us fix notations and assumptions we are going to use for a bit:
\begin{assumption}\label{baseas}
$(\X,\sfd,\mm)$ is an $\RCD(-(N-1),N)$ space with \(N\geq 3\) and \(\lambda_1\geq N-2\). We also assume that $\supp(\mm)=\X$ and that $\X$ has two ends with infinite volume.

Finally, we shall denote by $u$ a fixed bounded and non-constant harmonic function on $\X$. The existence of such $u$ is granted by \textit{Corollary} \ref{thm2ends}.
\end{assumption}
We start with the following simple regularity statement:
\begin{lemma}\label{le:baseb}
With the same assumptions and notation as in Assumption \ref{baseas} the following holds.

The function $u$ is in $\Test_{loc}(\X)$ and globally Lipschitz.
\end{lemma}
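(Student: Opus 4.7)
The plan is to verify directly each of the four conditions defining $\Test_{loc}(\X)$, and then upgrade local Lipschitz regularity to global Lipschitz regularity using the boundedness of $u$.

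First, boundedness of $u$ gives $u \in L^\infty(\X) \subset L^\infty_{loc}(\X)$ immediately. Moreover, being harmonic, $u$ belongs to $W^{1,2}_{loc}(\X)$ and satisfies $\Delta u = 0$ in the sense of Definition \ref{def:lapl}, so that $u \in D(\Delta_{loc})$ with $\Delta u \equiv 0 \in W^{1,2}_{loc}(\X)$. Thus two of the defining conditions and the last membership are free; what remains is to prove that $u$ admits a (globally) Lipschitz representative.

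To obtain Lipschitz regularity I would apply the Cheng--Yau type gradient estimate \eqref{chengyau}. The estimate is stated for positive harmonic functions, so since $u$ is bounded, pick $C > 0$ large enough that $v := u + C \geq 1$ on $\X$; then $v$ is a positive harmonic function on the whole space. Applied on a ball $B_{2R}(x)$, estimate \eqref{chengyau} with $K = -(N-1)$ yields, on $B_R(x)$,
\[
|\d u| \;=\; |\d v| \;\leq\; C(N)\,\frac{1 + \sqrt{N-1}\,R}{R}\, v.
\]
Since $\X$ is (by Assumption \ref{baseas}) a fixed proper $\RCD(-(N-1),N)$ space, we may let $R \to \infty$ in this pointwise bound and obtain
\[
|\d u|(x) \;\leq\; C(N)\sqrt{N-1}\,\bigl(u(x) + C\bigr) \quad \m\text{-a.e.}\,x,
\]
and, using $\|u\|_{L^\infty} < \infty$, the right-hand side is globally bounded by some constant $L$. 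In particular $|\d u| \leq L$ $\m$-a.e.\ on $\X$, so by the Sobolev-to-Lipschitz property valid on $\RCD$ spaces the function $u$ admits an $L$-Lipschitz representative, which is a fortiori in $\Lip_{loc}(\X)$.

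Combining these facts, the chosen representative of $u$ lies in $L^\infty_{loc}(\X) \cap \Lip_{loc}(\X) \cap D(\Delta_{loc})$ with $\Delta u = 0 \in W^{1,2}_{loc}(\X)$, hence $u \in \Test_{loc}(\X)$, and it is moreover globally Lipschitz. No genuine obstacle arises here: the argument is a direct combination of the Cheng--Yau estimate and Sobolev-to-Lipschitz; the only minor point to keep in mind is the translation $u \mapsto u + C$ so as to make Cheng--Yau applicable, and the harmless passage $R \to \infty$ which works because the $\RCD(-(N-1),N)$ structure, not the size of the ball, is what controls the constant in \eqref{chengyau}.
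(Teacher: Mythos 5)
Your proposal is correct and follows essentially the same route as the paper: invoke the Cheng--Yau gradient estimate \eqref{chengyau} on the positive harmonic function $u+C$, let $R\to\infty$ using the boundedness of $u$ to get a uniform bound on $|\d u|$, then apply the Sobolev-to-Lipschitz property, and finally combine with $\Delta u\equiv 0$ to place $u$ in $\Test_{loc}(\X)$. The paper's proof is a terse version of exactly this argument.
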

\begin{proof}
By the Cheng-Yau gradient estimate \((\ref{chengyau})\) and the fact that $u$ is bounded it follows that it is globally Lipschitz. This information together with the fact that $\Delta u\equiv0$ suffices to ensures that $u\in \Test_{loc}(\X)$.
\end{proof}

The rigidity result in this paper is ultimately a consequence, as customary, of the equality in the Bochner inequality. Before coming to that it is useful to recall that following result, that in the $\RCD$ setting has been proved in \cite{GV21}. We provide anyway the complete proof because we shall be interested in the equality case, that was not explicitly studied in \cite{GV21}:
\begin{lemma}[Generalized refined Kato inequality]\label{le:kato} Let  \((\X,\sfd,\mm)\)  be a   \(\RCD(K,N)\) space with \(K\in \R\) and \(N\in[1,+\infty)\). Then for any \(u\in H^{2,2}_{loc}(\X)\) it holds 
\begin{equation}\label{kato}
\frac{t+\dim(\X)}{t+\dim(\X)-1}|\d|\d u||^2\leq|\Hess(u)|^2_{\sf HS}+\frac{\left(\tr\Hess(u)\right)^2}{t}\quad\mm-a.e.\qquad \forall t>0,
\end{equation}
where $\dim(\X)\in \N\cap [1,N]$ is the (constant) dimension of $L^0(T\X)$, recall Theorem \ref{thm:dimX}. Equality holds $\mm$-a.e.\ for a given $t$ if and only if 
\begin{equation}
\label{eq:eqhess}
\Hess (u)=\alpha\big(\Id-(t+\dim(\X))\,\e_1\otimes\e_1\big),
\end{equation}
for some  function $\alpha\in L^2_{loc}(\X)$, where $\e_1\in L^0(T\X)$ is a pointwise unitary vector that is equal to $\tfrac{\nabla u}{|\nabla u|}$ on  $|\nabla u|>0$.
\end{lemma}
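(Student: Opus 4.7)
The plan is to reduce \eqref{kato} to a pointwise algebraic identity in $L^0(T\X)$. On the set $\{|\nabla u|=0\}$ the locality of the differential gives $\d|\d u|=0$ $\mm$-a.e.\ (since $|\d u|\in W^{1,2}_{loc}$ vanishes there), so both sides of \eqref{kato} vanish. On $\{|\nabla u|>0\}$ set $\e_1:=\nabla u/|\nabla u|$; the chain rule combined with $\d(|\d u|^2)=2\,\Hess u(\nabla u,\cdot)$, which in turn follows from $\nabla(\nabla u)=\Hess u$, yields $\d|\d u|=\Hess u(\e_1,\cdot)$ and therefore $|\d|\d u||^2=|\Hess u(\e_1,\cdot)|^2$.

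Next, using that $n:=\dim(\X)$ is constant by Theorem \ref{thm:dimX}, decompose the tangent module pointwise on $\{|\nabla u|>0\}$ as $\langle\e_1\rangle\oplus\e_1^\perp$ and introduce the $L^0$-functions
$$A:=\Hess u(\e_1,\e_1),\quad C:=\bigl|\Hess u(\e_1,\cdot)\restr{\e_1^\perp}\bigr|,\quad B:=\tr\bigl(\Hess u\restr{\e_1^\perp\times\e_1^\perp}\bigr),\quad D:=\bigl|\Hess u\restr{\e_1^\perp\times\e_1^\perp}\bigr|_{\sf HS}^2.$$
Then
$$|\d|\d u||^2=A^2+C^2,\qquad |\Hess u|_{\sf HS}^2=A^2+2C^2+D,\qquad \tr\Hess u=A+B,$$
while pointwise Cauchy-Schwarz on the $(n{-}1)$-dimensional fiber $\e_1^\perp$ gives $D\geq B^2/(n-1)$, with equality iff $\Hess u\restr{\e_1^\perp\times\e_1^\perp}=\tfrac{B}{n-1}\Id_{\e_1^\perp}$. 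Substituting, inequality \eqref{kato} reduces to
$$\frac{A^2}{t+n-1}\;\leq\;\frac{t+n-2}{t+n-1}\,C^2\;+\;\frac{B^2}{n-1}\;+\;\frac{(A+B)^2}{t}.$$
The coefficient of $C^2$ is nonnegative; the remaining $B$-dependent terms, for fixed $A$, are minimized at $B=-A(n-1)/(t+n-1)$, where a short computation shows they equal exactly $A^2/(t+n-1)$. Hence the inequality holds.

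Tracing the chain of bounds backward yields the equality characterization: equality forces $C=0$ (when $n\geq 2$), saturation of Cauchy-Schwarz so that $\Hess u\restr{\e_1^\perp\times\e_1^\perp}=\tfrac{B}{n-1}\Id_{\e_1^\perp}$, and $B=-A(n-1)/(t+n-1)$. Setting $\alpha:=-A/(t+n-1)$ one has $A=-\alpha(t+n-1)$ and $B/(n-1)=\alpha$, and a direct substitution gives $\Hess u=\alpha\bigl(\Id-(t+n)\,\e_1\otimes\e_1\bigr)$, which is \eqref{eq:eqhess}; the case $n=1$ is trivial since then $\e_1^\perp=\{0\}$ and \eqref{kato} becomes an identity. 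The only real friction I anticipate is bookkeeping in the $L^0$-module framework: making precise the pointwise orthogonal decomposition on $\{|\nabla u|>0\}$ and justifying $\d|\d u|=\Hess u(\e_1,\cdot)$ for an arbitrary $u\in H^{2,2}_{loc}(\X)$, for which one first establishes the identity on $\Test_{loc}(\X)$ (where $|\d u|^2\in W^{1,2}_{loc}$ is immediate) and then passes to the limit along the defining $H^{2,2}_{loc}$-approximating sequence.
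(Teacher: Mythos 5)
Your proof is correct, and the route is mildly but genuinely different from the paper's. The paper diagonalizes $\Hess u$ pointwise via the spectral theorem, labels the eigenvalue of largest modulus $\alpha_1$, and first upper-bounds $|\d|\d u||^2 = |\Hess u(\e_1,\cdot)|^2$ by $\alpha_1^2$ before running the algebraic estimates; the equality case then has to be unwound to conclude \emph{a posteriori} that $\nabla u/|\nabla u|$ lies in the top eigenspace. You instead decompose $\Hess u$ directly in the orthogonal splitting $\langle\e_1\rangle\oplus\e_1^\perp$ with $\e_1=\nabla u/|\nabla u|$, track the off-diagonal term $C$ explicitly, and observe that its coefficient $\tfrac{t+n-2}{t+n-1}$ is nonnegative for $n\geq 2$ (and that $n=1$ is degenerate). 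The algebraic core is the same in both versions — Cauchy--Schwarz on the $(n-1)$-dimensional block plus a one-variable quadratic minimization over the complementary trace — but your organization avoids the detour through the eigendecomposition and makes the equality characterization land directly on the $\e_1$ appearing in the statement, which is slightly cleaner. One small slip: on $\{|\nabla u|=0\}$ you claim \emph{both} sides of \eqref{kato} vanish, but only the left-hand side does (the Hessian need not vanish there); the inequality still holds trivially since $0\leq\text{RHS}$, and for the equality analysis one simply notes that equality on that set forces $\Hess u=0$, i.e.\ $\alpha=0$ there in \eqref{eq:eqhess}. This does not affect the argument but the phrasing should be corrected.
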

\begin{proof} Let \(A\) be a symmetric \(n\times n\) real matrix. We claim that  for every \(t>0\) and  every \(v\in \R^n\) we have:
\begin{equation}
\label{eq:claimmatrix}
\frac{t+n}{t+n-1}|A\cdot v|^2\leq|v|^2|A|^2_{\sf HS}+ \frac{|v|^2(\tr A)^2}{t}.
\end{equation}
Indeed, by the spectral theorem there is an orthonormal base $\e_1,\ldots,\e_n$ of $\R^n$ so that  \(A\) is diagonal w.r.t.\ such base, with diagonal entries \(\alpha_1,...,\alpha_n\). We can also assume that   \(|\alpha_1|\geq |\alpha_i|\) for \(i=2,...,n\). Then, applying twice the Cauchy-Schwarz inequality we obtain
\begin{align*}
&|v|^2\left( \frac{(\alpha_1+\alpha_2+...+\alpha_n)^2}{t}+\alpha_1^2+\alpha_2^2+...+\alpha_n^2\right)\\
&\qquad\stackrel{(1)}\geq |v|^2\left(\frac{(\alpha_1+\alpha_2+...+\alpha_n)^2}{t}+\frac{(\alpha_2+\alpha_3+...+\alpha_{n})^2}{n-1} +\alpha_1^2 \right)\\
&\qquad\stackrel{(2)}\geq |v|^2\left( \frac{\alpha_1^2}{t+n-1} +\alpha_1^2\right)\\
&\qquad\stackrel{(3)}\geq \frac{t+n}{t+n-1}|A\cdot v|^2,
\end{align*}
which is \eqref{eq:claimmatrix}. Equality holds in \eqref{eq:claimmatrix} iff the inequalities above are all equality. Equality in $(1)$ holds iff $\alpha_2=\cdots=\alpha_n=:\alpha$,  equality in $(2)$ iff $\alpha=\alpha_1(1-(t+n))$. Finally, equality in $(3)$ holds iff $v$ is a multiple of $\e_1$. In other words, equality in \eqref{eq:claimmatrix} holds iff 
\[
A=\alpha(\Id-(t+n)\e_1\otimes\e_1).
\]
We come to \eqref{kato} and its equality case and start noticing that  $\d|\d u|=\nchi_{\{|\nabla u|>0\}}\Hess u(\tfrac{\nabla u}{|\nabla u|})$ (for $u\in\Test_{loc}(\X)$ this follows from \cite[Proposition 3.3.32 - (i)]{Gigli14} and the chain rule - see also \cite[Lemma 2.5]{DGP19}, then the general case comes by approximation). Now the conclusion follows picking a suitable pointwise orthonormal base $\e_1,\ldots,\e_{\dim(\X)}\in L^0(T\X)$ of $L^0(T\X)$ that diagonalizes $\Hess u$.\end{proof}

\begin{prop}\label{hessianbusemann} With the same assumptions and notation as in  Assumption \ref{baseas} the following hold:
\begin{enumerate}
	\item \(\lambda_1=N-2\).
	\item $|\d u|$ is locally Lipschitz (i.e.\ it has a locally Lipschitz representative) and \emph{strictly positive}.
	\item Putting $\e_1:=\frac{\nabla u}{|\nabla u|}$ (this is well defined by the above) we have
	\begin{equation}
\label{eq:hessbus}
\Hess(u)=\alpha  (\Id-N\e_1\otimes\e_1),
\end{equation}
for some $\alpha\in L^2_{loc}(\X)$.
\item We have $|\d u|^{\frac{N-2}{N-1}}\in D(\Delta_{loc})$ with
\begin{equation}
\label{eq:rigdelta}
\Delta (|\d u|^{\frac{N-2}{N-1}})=-(N-2)|\d u|^{\frac{N-2}{N-1}}.
\end{equation}
\item $u$ is an open map, i.e.\ $u(U)\subset \R$ is open for any $U\subset\X$ open.
\end{enumerate}
\end{prop}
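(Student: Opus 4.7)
My plan is to combine the Bochner inequality for $u$ (which lies in $\Test_{loc}(\X)$ and is harmonic by Lemma \ref{le:baseb}) with the refined Kato inequality of Lemma \ref{le:kato} to show that $w := |\d u|^{(N-2)/(N-1)}$ is a subsolution of an eigenvalue equation, and then to use the hypothesis $\lambda_1 \geq N-2$ to force equality throughout and read off the five conclusions. Concretely, since $\Delta u = 0$, inequality \eqref{bochner} gives
\[
\bd\bigl(\tfrac{1}{2}|\d u|^2\bigr) \;\geq\; \Bigl(|\Hess u|_{\sf HS}^2 - (N-1)|\d u|^2 + \tfrac{(\tr\Hess u)^2}{N-\dim(\X)}\Bigr)\,\mm,
\]
and Lemma \ref{le:kato} with $t = N-\dim(\X)$ (interpreted by continuity when $\dim(\X)=N$, in which case $\tr\Hess u = \Delta u = 0$ kills the trace term anyway) bounds $|\Hess u|_{\sf HS}^2 + \tfrac{(\tr\Hess u)^2}{N-\dim(\X)}$ from below by $\tfrac{N}{N-1}|\d|\d u||^2$. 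Combining them yields the master inequality
\[
\bd\bigl(\tfrac{1}{2}|\d u|^2\bigr) \;\geq\; \Bigl(\tfrac{N}{N-1}|\d|\d u||^2 - (N-1)|\d u|^2\Bigr)\,\mm. \qquad (\star)
\]

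The exponent $q := \tfrac{N-2}{N-1}$ is engineered precisely so that, setting $v := |\d u|$, the weak chain rule $\Delta(v^q) = \tfrac{q}{2}v^{q-2}\Delta(v^2) + q(q-2)v^{q-2}|\nabla v|^2$ applied to $(\star)$ produces exact cancellation of the $|\nabla v|^2$ term, since $q\bigl(\tfrac{N}{N-1}+q-2\bigr)=0$. Writing $w := v^q$, this yields the distributional inequality $\bd w + (N-2)w\,\mm \geq 0$. Testing it against $\chi^2 w$ for a Lipschitz cutoff $\chi$ with bounded support, and combining with the Rayleigh-quotient bound $\int |\nabla(\chi w)|^2\,\d\mm \geq \lambda_1 \int (\chi w)^2\,\d\mm$, gives
\[
(\lambda_1 - (N-2))\int \chi^2 w^2\,\d\mm \;\leq\; \int w^2|\nabla\chi|^2\,\d\mm.
\]
The main technical obstacle is forcing the right-hand side to be asymptotically negligible relative to the left: $w$ is in general not in $L^2(\X)$, so one cannot simply send $\chi \uparrow 1$. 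Instead, one must choose a sequence $\chi_n$ calibrated to the volume growth of $\X$ (controlled by the Bishop--Gromov inequality) and to the decay of $|\d u|$ on the ends (ultimately descending from the exponential estimates of Lemma \ref{lem1}, since by Corollary \ref{thm2ends} the function $u$ is built from the harmonic functions $f_E$), via a Brooks/Persson-type argument. Pushing this through forces $\lambda_1 = N-2$, which is (1), along with equality throughout: in the Bochner inequality, in the refined Kato inequality, and in $\bd w + (N-2)w\,\mm \geq 0$, so that $\bd w = -(N-2)w\,\mm$; by \eqref{eq:complap} this gives (4).

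The equality condition of Lemma \ref{le:kato} with $t + \dim(\X) = N$ then reads $\Hess u = \alpha(\Id - N\,\e_1\otimes\e_1)$ for some $\alpha \in L^2_{loc}(\X)$, giving (3). For (2): the eigenfunction equation $\Delta w = -(N-2)w$ together with $w \in L^\infty_{loc}$ (since $u$ is globally Lipschitz by Lemma \ref{le:baseb}) implies that $w$, and hence $|\d u| = w^{(N-1)/(N-2)}$, admits a locally Lipschitz representative by standard elliptic regularity for Laplacian eigenfunctions on $\RCD$ spaces; strict positivity $|\d u| > 0$ then follows from $w \geq 0$, $w \not\equiv 0$ (as $u$ is non-constant), and the strong minimum principle for non-negative eigenfunctions. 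Finally, for (5), fix an open $U \subset \X$ and $x_0 \in U$; let $U_0$ denote the (path-)connected component of $U$ containing $x_0$, which is open and path-connected because $\X$ is geodesic, and choose $r > 0$ with $\overline{B_r(x_0)} \subset U_0$. If $u$ attained its minimum over $\overline{B_r(x_0)}$ at $x_0$, the strong minimum principle for the harmonic function $u$ would force $u$ constant on $B_r(x_0)$, contradicting $|\d u| > 0$ from (2); hence there exists $x_1 \in \overline{B_r(x_0)}$ with $u(x_1) < u(x_0)$, and symmetrically $x_2 \in \overline{B_r(x_0)}$ with $u(x_2) > u(x_0)$. Joining $x_1$ and $x_2$ by a continuous path in $U_0$ and applying the intermediate value theorem to $u$ along this path shows $u(U) \supset u(U_0) \supset [u(x_1),u(x_2)] \ni u(x_0)$, a neighborhood of $u(x_0)$, proving openness.
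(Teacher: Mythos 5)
Your proposal follows essentially the paper's route for items (1)--(4): apply the improved Bochner inequality and the refined Kato inequality to the harmonic function $u$, pass to $w=|\d u|^{(N-2)/(N-1)}$ via the chain rule (the exponent is chosen exactly as you describe), test the resulting differential inequality against $\chi^2 w$ and compare with the Rayleigh quotient, and kill the error term by a cutoff argument exploiting the decay estimates of Lemma \ref{lem1} together with Bishop--Gromov. Three points deserve flagging.

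First, you discard the Kato deficit too early. The paper writes $|\Hess u|_{\sf HS}^2 = \tfrac{N}{N-1}|\d|\d u||^2 - \tfrac{(\tr\Hess u)^2}{N-\dim(\X)} + F$ with $F\geq 0$ and keeps $F$ through the chain rule and the integration. Your master inequality $(\star)$ has dropped $F$, so after the cutoff argument shows $\int w\,\d(\bd w + (N-2)w\,\mm)\leq 0$ you can conclude $\bd w = -(N-2)w\,\mm$ but you have no handle on $F$. You assert ``equality throughout: \dots in the refined Kato inequality'' but this is declared rather than derived; to get item (3) one needs $F$ to appear in the inequality that is being integrated so that $\int |\d u|^{-2/(N-1)}F\,\d\mm=0$ forces $F=0$ $\mm$-a.e. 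Second, the chain rule with exponent $q=\tfrac{N-2}{N-1}<1$ applied to $|\d u|$ is problematic on the set where $|\d u|$ might vanish, and at this stage of the argument $|\d u|>0$ (item 2) has not yet been established; the paper handles this with the regularization $\varphi_{\beta,\eps}(z)=(z+\eps)^\beta$ and $\eps\downarrow 0$. Third, the Lipschitz regularity of $|\d u|$ in (2) is not quite ``standard elliptic regularity for eigenfunctions'' in this generality; the paper obtains it by applying Bochner once more to $f=|\d u|^{(N-2)/(N-1)}$, deriving $\bd\tfrac{|\d f|^2}{2}\geq -(2N-3)|\d f|^2\,\mm$ from $\Delta f=-(N-2)f$, whence $|\d f|$ is locally bounded and Sobolev-to-Lipschitz applies.

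For (5) your argument differs from the paper's and works. You exclude local extrema of $u$ via the strong maximum principle (using the already-established strict positivity of $|\d u|$) and then use path-connectedness of open sets in a geodesic space plus the intermediate value theorem. The paper instead exploits the Regular Lagrangian Flow of the truncated gradient $\xi\nabla u$ to exhibit, near any point, a curve along which $u$ strictly increases at a definite rate, so that the image of a small ball under $u$ contains an interval. Both are valid; yours is more elementary and classical, while the paper's dovetails with the flow machinery it has already set up.
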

\begin{proof} The required rigidity will follow by closely inspecting the proof of \cite[Theorem 3.4]{GV21}. Start noticing that since we know from Lemma \ref{le:baseb} above that $u\in \Test_{loc}(\X)$, we can apply the improved Bochner inequality \ref{bochner} (say $N>\dim(\X)$; the case $N=\dim(\X)$ follows along similar lines recalling that in this case $\tr\Hess u=\Delta u=0$ and that the last addend in the Bochner inequality below is 0 - see \cite{Han14}) recalling that $\Delta u=0$ to get
\[
\bd\tfrac{|\d u|^2}{2}\geq\Big(|\Hess u|_{\sf HS}^2-(N-1)|\d u|^2+\frac{(\tr\Hess u)^2}{N-\dim(\X)}\Big)\m.
\]
Kato's inequality as in Lemma \ref{kato} with $t=N-\dim(\X)$ can be written as 
\[
|\Hess u|_{\sf HS}^2=\frac N{N-1}|\d|\d u||^2-\frac{(\tr\Hess u)^2}{N-\dim(\X)}+F,\qquad \text{for some }F\geq 0\ \m-a.e.
\]
and the equality case reads as
\begin{equation}
\label{eq:eqkato}
\text{$F=0$ $\m$-a.e.}\qquad\Rightarrow\qquad \text{identity \eqref{eq:hessbus}  holds.}
\end{equation}
We thus have
\begin{equation}
\label{eq:conF}
\bd\tfrac{|\d u|^2}{2}\geq (\tfrac{N}{N-1}|\nabla|\nabla u||^2-(N-1)|\nabla u|^2 +F)\m
\end{equation}
Now let us fix  $\beta,\eps>0$ and put $\varphi(z):=\varphi_{\beta,\eps}(z):=(z+\eps)^\beta$.  Then \eqref{eq:conF} and basic calculus rules (see also the proof of  \cite[Theorem 3.4]{GV21}) give
\[
\begin{split}
\bd(\varphi(|\d u|^2))\geq \Big(\beta(|\d u|^2+\eps)^{\beta-1}(F-2(N-1)|\d u|^2)+|\d|\d u||^2\Big(\beta\tfrac{2N}{N-1}+\tfrac{4\beta(\beta-1)|\d u|^2}{|\d u|^2+\eps}\Big)\Big)\m.
\end{split}
\]
Now pick  $\beta:=\frac{N-2}{2(N-1)}$ and let $\eps\downarrow0$: the rightmost addend goes to 0 and putting for brevity $f:=|\nabla u|^{\frac{N-2}{N-1}}$ we are left with
\begin{equation}
\label{eq:perrigid}
\bd f+\lambda_1f\m \geq \big( {(\lambda_1-(N-2))}f+\tfrac{N-2}{2(N-1)} F|\d u|^{-\frac{N}{N-1}}\big)\m\geq 0
\end{equation}
(notice that the passage to the limit is justified also by the fact that $f\in W^{1,2}_{loc}(\X)$, as established in  \cite[Theorem 3.4]{GV21} - the proof of this  follows from the computations we are repeating here). 

To deduce the desired rigidity from the above, start observing  that for any $\varphi\in \Lip_{bs}(\X)$ we have
\[
\int \varphi^2 f\,\d\bd f=\int-\varphi^2|\d f|^2-2f\varphi\langle\d\varphi,\d f\rangle\,\d\m=\int-|\d(\varphi f)|^2+f^2|\d \varphi|^2\,\d\m,
\]
and thus using that $\lambda_1\int f^2\varphi^2\,\d\mm\leq \int |\d(\varphi f)|^2\,\d\mm$ we get
\[
\int \varphi^2f\,\d(\bd f+\lambda_1f\m)\leq \int f^2|\d\varphi|^2\,\d\m.
\]
Notice that \eqref{eq:perrigid} and the assumption $\lambda_1\geq N-2$ tell in particular that $\bd f+\lambda_1f\m\geq 0$, then fix $R>0$, let $\varphi:=(1-R^{-1}\sfd(\cdot,B_R(p)))^+$ in the above then let $R\uparrow\infty$ to get
\begin{equation}
\label{eq:rigid2}
\int f\,\d(\bd f+\lambda_1f\m)\leq\liminf_{R\to+\infty}\frac1{R^2}\int_{B_{2R}(p)\setminus B_R(p)}f^2\,\d\m.
\end{equation}
Let us prove that the right hand side is zero. Putting $A_R:=B_{2R}(p)\setminus B_R(p)$, from Holder's inequality we get
\[
\begin{split}
\int_{A_R}f^2\,\d\m&\leq\Big(\int_{A_R}|\d u|^2 e^{2\sqrt{\lambda_1}\sfd(\cdot,p)}\,\d\m\Big)^{\frac{N-2}{N-1}}\Big(\int_{A_R} e^{-2(N-2)\sqrt{\lambda_1}\sfd(\cdot,p)}\,\d\m\Big)^{\frac1{N-1}}\\
\text{(by \eqref{eq13})}\qquad&\leq  CR^{\frac{N-2}{N-1}}\Big(\int_R^{2R} e^{-2(N-2)\sqrt{\lambda_1}r}s(r)\,\d \m\Big)^{\frac1{N-1}}\\
\text{(by \eqref{eq:BG})}\qquad &\leq  CR^{\frac{N-2}{N-1}}\Big(\int_R^{2R} e^{(-2(N-2)\sqrt{\lambda_1}+N-1)r}\,\d \m\Big)^{\frac1{N-1}}\leq CR,
\end{split}
\]
where in the last inequality we used the fact that $-2(N-2)\sqrt{\lambda_1}+N-1\leq 0$, that in turn follows from $\lambda_1\geq (N-2)$ and $N\geq 3$.

Therefore from \eqref{eq:rigid2} we see that $\int f\,\d(\bd f+\lambda_1f\m)\leq0$ and thus from  \eqref{eq:perrigid} we conclude that
\[
(\lambda_1-(N-2))\int f^2\,\d\m+\int |\nabla u|^{-\frac{2}{N-1}}F\, \d\m\leq 0.
\]
Since $\lambda_1\geq N-2$ and $f$ is not identically 0 (as $u$ is not constant), we deduce that $\lambda_1=N-2$, i.e.\ item (1) holds. Also, since $ |\nabla u|^{-\frac{2}{N-1}}\geq \Lip(u)^{-\frac{2}{N-1}}>0$, we see that $F=0$ $\m$-a.e., and thus by \eqref{eq:eqkato} item (3) holds as well.

Now notice that we now know that 
\begin{equation}
\label{eq:feigen}
\bd f=-(N-2)f\m
\end{equation}
and by the compatibility of the concepts of measure-valued and $L^2$-valued Laplacian (see \cite[Chapter 4]{Gigli12}), item $(4)$ follows. Moreover, we also deduce that $\bd f\leq 0$, hence according to \cite{Gigli-Mondino12} $f$ is superharmonic on any bounded open subset of $\X$. Since $\X$ is locally doubling and supports a local weak Poincar\'e inequality, the conclusion follows from the weak Harnack inequality (see e.g.\ \cite[Chapter 8]{Bjorn-Bjorn11}) recalling, once again, that $f$ is not identically 0. Even more, combining \eqref{eq:feigen} and the Bochner inequality it is not hard to see that 
\[
\begin{split}
\bd\tfrac{|\d f|^2}2\geq \la\d f,\d\Delta f\ra-(N-1)|\d f|^2=-(2N-3)|\d f|^2,
\end{split}
\]
thus the same arguments just expressed tell that $|\d f|$ is locally bounded from above, i.e.\ that $f$ is locally Lipschitz (by the Sobolev-to-Lipschitz property). Hence the same holds for $|\d u|=f^{\frac{N-1}{N-2}}$.

It remains to prove that $u$ is open. Fix $\bar x\in\X$, let $\xi:\X\to[0,1]$ be a Lipschitz cut-off function with compact support identically 1 on $B_2(\bar x)$. Let $(\Fl_t)$ be the Regular Lagrangian Flow of $\xi\nabla u$, that is easily seen to exists as $|\xi\nabla u|\in L^\infty$, $\div(\xi\nabla u)=\la\nabla \xi,\nabla u\ra\in L^\infty$ and $\xi\nabla u\in W^{1,2}_C(T\X)$ by \cite[Proposition 3.3.22]{Gigli14}. By the defining property \eqref{RLFder} and the finite speed of propagation that follows from \eqref{eq:speedrlf}, we see that there is $T>0$ such that $\partial_t u(\Fl_t(x))=|\d u|^2(\Fl_t(x))$ holds for $\mm$-a.e.\ $x\in B_1(\bar x)$ and $t\in[-T,T]$. For these $x,t$, recalling also item $(2)$ we see that $\partial_tu(\Fl_t(x))\geq c$ for $c:=\inf_{B_2(\bar x)}|\d u|^2>0$. Fix $x$ for which this holds for a.e.\ $t\in[-T,T]$ and use the continuity of $u$ to deduce that the image under $u$ of $\{\Fl_t(x):t\in[-T,T]\}$ contains $[u(x)-Tc,u(x)+Tc]$. Picking $x$ sufficiently close to $\bar x$ we conclude that $u(B_1(\bar x))$ contains a neighbourhood of $u(x)$ and repeating the argument with $B_r(\bar x)$, $r\ll1$, in place of $B_1(\bar x)$ we conclude. 
\end{proof}

\subsection{$|\d u|$ as a function of $u$}\label{chcompfun}

In this section we prove that the weak upper gradient of \(u\) is of the form $\varphi\circ u$ for a suitable smooth function $\varphi$. Notice that from \eqref{eq:hessbus} it follows that
\[
\d|\d u|=\Hess u(\tfrac{\nabla u}{|\d u|})=\alpha(1-N)\tfrac{\d u}{|\d u|},
\]
i.e.\ $\d|\d u|=h\d u$ for some function $h$. In the smooth category, this suffices to conclude that, locally, $|\d u|$ is a function of $u$, a natural line of thought being: let $U$ be a small open set such that level sets of $u$ are smooth-path-connected in $U$ (recall that $|\d u|>0$), then pick $x,y\in U$ with $u(x)=u(y)$ and find a smooth curve $\gamma$ joining them, with values in $U$ and with $t\mapsto u(\gamma_t)$ constant. Then $0=\partial_t u(\gamma_t)=\d u(\gamma_t')$ and thus  $\partial_t(|\d u|(\gamma_t))=\d|\d u|(\gamma_t')=h(\gamma_t)\d u(\gamma_t')=0$, proving that $|\d u|$ is also constant along $\gamma$ and thus that, on $U$, the value of $|\d u|$ depends solely on that of $u$.

We are going to prove an analogous statement in our setting: roughly said, the underlying idea is the same just exposed, but the technicalities are much more involved. 
\begin{prop}\label{prop:comploc}
Let $(\X,\sfd,\mm)$ be an $\RCD(K,N)$ space and $f,g\in \Lip_{loc}(\X)$ be such that $\d g=h \d f$ for some $h:\X\to\R$. 

Also, assume that $f\in D(\Delta_{loc})\cap W^{2,2}_{loc}(\X) $ is with  $\frac1{|\d f|},|\Delta f|,|\Hess f|_{\sf HS}\in L^\infty_{loc}(\X)$.

Then for every $x\in \X$ there are a neighbourhood $U$ and a Lipschitz function $\varphi:\R\to\R$ such that $g=\varphi\circ f$ on $U$.
\end{prop}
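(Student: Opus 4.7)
The plan is to use the Regular Lagrangian Flow of $v := \nabla f/|\nabla f|^2$ to parametrize a neighborhood of a fixed $x_0\in\X$, define $\varphi$ from the values of $g$ along the flow line through $x_0$, and reduce the claim $g=\varphi\circ f$ to the constancy of $g$ on level sets of $f$ locally.

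First I would set up the flow. Since $\nabla f\in W^{1,2}_{C,loc}$, we have $\d|\d f|^2=2\Hess(f)(\nabla f,\cdot)$ pointwise controlled by $|\Hess f|_{\sf HS}|\nabla f|\in L^\infty_{loc}$; by the Sobolev-to-Lipschitz property applied to $|\d f|^2$ and the assumption $1/|\d f|\in L^\infty_{loc}$, $|\d f|$ admits a locally Lipschitz representative that is uniformly positive on a neighborhood $U$ of $x_0$. Then $v$ is in $L^\infty\cap W^{1,2}_C$ locally, with $\div v=\Delta f/|\nabla f|^2-2\Hess f(\nabla f,\nabla f)/|\nabla f|^4\in L^\infty_{loc}$. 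Cut off by a Lipschitz bump, $v$ has a unique Regular Lagrangian Flow $(\Fl_t)$ which, by the Bru\`e--Semola regularity invoked earlier (cf.\ Remark \ref{re:BS}), has a locally Lipschitz continuous representative on bounded time intervals. Applying the defining identity \eqref{RLFder} to $f$ and, by approximation through test functions, to $g$, one gets for every $x\in U$ and $|t|$ small
\[
f(\Fl_t(x))=f(x)+t,\qquad g(\Fl_t(x))=g(x)+\int_0^t h(\Fl_s(x))\,\d s,
\]
the $\m$-a.e.\ identity extending to all points of $U$ by continuity of both sides.

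Next I would introduce $\varphi:(f(x_0)-\delta,f(x_0)+\delta)\to\R$ by $\varphi(r):=g(\Fl_{r-f(x_0)}(x_0))$, which is Lipschitz, and observe the following reduction: the identity $g=\varphi\circ f$ on $U$ is equivalent to $g$ being constant on each slice $U\cap\{f=c\}$, since the flow line through $x_0$ meets each nearby level set in exactly one point where $g$ takes the value $\varphi(c)$. The task thus reduces to proving that any two $y_0,y_1\in U$ with $f(y_0)=f(y_1)$ satisfy $g(y_0)=g(y_1)$. For this, I would connect $y_0$ and $y_1$ by a Lipschitz curve $\gamma$ in $U$ and consider the projected curve $\tilde\gamma_s:=\Fl_{f(y_0)-f(\gamma_s)}(\gamma_s)$; by Lipschitz regularity of $\Fl$ and $f$, $\tilde\gamma$ is a Lipschitz curve joining $y_0$ to $y_1$ lying entirely on $\{f=f(y_0)\}$. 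Thickening $\gamma$ into a test plan $\ppi$ with endpoints spread over small neighborhoods of $y_0,y_1$ and pushing forward by $\gamma\mapsto\tilde\gamma$ produces a plan $\tilde\ppi$ concentrated on curves in level sets of $f$. For $\tilde\ppi$-a.e.\ $\tilde\gamma$ the velocity $\tilde\gamma'_s$ in the pullback tangent module (see \cite[Section 2.3.5]{Gigli14}) satisfies $\d f(\tilde\gamma'_s)=\partial_s(f\circ\tilde\gamma)=0$, so $\d g(\tilde\gamma'_s)=h\,\d f(\tilde\gamma'_s)=0$ by hypothesis, and integrating along the curve yields $g(\tilde\gamma_0)=g(\tilde\gamma_1)$ for $\tilde\ppi$-a.e.\ curve. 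A density argument together with the continuity of $g$, letting the endpoint marginals of $\ppi$ sweep a neighborhood of $(y_0,y_1)$, then gives the pointwise equality $g(y_0)=g(y_1)$.

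The main obstacle is verifying that $\tilde\ppi$ is a genuine test plan, specifically that $(e_s)_*\tilde\ppi\leq C\m$. The map $\gamma\mapsto\tilde\gamma_s$ collapses the $\nabla f$-direction at each time, so \emph{a priori} the pushforward of an absolutely continuous measure need not remain absolutely continuous with respect to $\m$. The natural remedy is to pull the whole picture back through the bi-Lipschitz local splitting $\Phi:U'\times(-\delta,\delta)\to U$, $(y,t)\mapsto\Fl_t(y)$, where $U':=U\cap\{f=f(x_0)\}$: on the product side $\tilde\gamma$ becomes the first-factor projection and bounded compression is manifest provided $\ppi$ is built from product-type initial data. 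With this careful construction in place the argument goes through and yields $g=\varphi\circ f$ on $U$, as required.
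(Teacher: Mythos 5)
Your overall strategy — set up the Regular Lagrangian Flow of $v=\nabla f/|\nabla f|^2$, reduce the claim to constancy of $g$ on level sets of $f$, and detect this by projecting test plans onto level sets and using $\d g=h\,\d f$ — is exactly the right idea and agrees in spirit with the paper's proof. The flow setup, the derivation $f(\Fl_t(x))=f(x)+t$, and the Lipschitz bound on $\varphi$ via the flow line are all fine. The gap is in the step you yourself flagged as the ``main obstacle'': the projected plan $\tilde\ppi$.

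Your $\tilde\gamma_s:=\Fl_{f(y_0)-f(\gamma_s)}(\gamma_s)$ satisfies $f(\tilde\gamma_s)\equiv f(y_0)$ for \emph{every} curve $\gamma$, so \emph{all} of $\tilde\ppi$ is concentrated on curves lying on the single level set $\{f=f(y_0)\}$. In particular $(\e_t)_*\tilde\ppi$ is supported on $\{f=f(y_0)\}$, which is $\mm$-null (because $|\d f|>0$ $\mm$-a.e.\ and $f$ is Lipschitz, the coarea inequality forces level sets to be null). No measure supported on an $\mm$-null set can satisfy $(\e_t)_*\tilde\ppi\le C\mm$, so $\tilde\ppi$ is never a test plan on $(\X,\sfd,\mm)$, and the pullback-differential computation has no meaning for it. The bi-Lipschitz splitting $\Phi:U'\times(-\delta,\delta)\to U$ does not repair this: on the product side you are projecting onto the slice $U'\times\{0\}$, which is still $\Phi^*\mm$-null regardless of whether the initial data of $\ppi$ is ``product-type.'' The test-plan notion sees the ambient reference measure $\mm$, not a projected surface measure on the level set, and there is no room to relax this since the hypothesis $\d g=h\,\d f$ is only an $\mm$-a.e.\ statement.

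The device you are missing — and this is the crux of the paper's proof — is to randomize the target level set. The paper projects via $H(x,t,s):=\Fl_{t+s-f(x)}(x)$ and takes the push-forward of $\ppi\times\tfrac1{2\bar s}\mathcal L^1\restr{[-\bar s,\bar s]}$ under $G(\gamma,s)_t:=H(\gamma_t,f(x_0),s)$. For each fixed $s$ the curve $G(\gamma,s)$ lies on the level set $\{f=f(x_0)+s\}$, and as $s$ varies over an interval these level sets sweep out an open set; the averaging over $s$ is exactly what produces the compression bound \eqref{eq:Hcomp} and hence a genuine test plan $\hat\ppi$. The marginals of $\hat\ppi$ then live on honest open sets $V_0,V_1$ of positive $\mm$-measure, which is what allows the contradiction $\inf_{V_1}g>\sup_{V_0}g$ (or, in your framing, allows the density argument) to close. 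Without this extra $s$-thickening the construction cannot yield a test plan, and the proof as written does not go through.
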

\begin{proof}
Fix $\bar x\in \X$. 
Let  $\eta:\X\to[0,1]$ be Lipschitz, with bounded support  and identically 1 on $B_{3 }(\bar x)$. Then by direct computation we see that for the vector field $v:=\eta\tfrac{\nabla f}{|\nabla f|^2}$ we have
\[
\begin{split}
\divv(v)&=\eta\frac{\Delta f}{|\nabla f|^2}-2\eta\frac{\langle \nabla|\nabla f|,\nabla f\rangle}{|\nabla f|^3}+\frac{\left\langle \nabla \eta,\nabla f\right\rangle}{|\nabla f|^2}\\
\nabla v&=\eta\frac{\Hess(f)}{|\nabla f|^2}-2\eta\frac{\nabla|\nabla f|\otimes \nabla f}{|\nabla f|^3}+\frac{\nabla\eta\otimes\nabla f}{|\nabla f|^2}
\end{split}
\]
so that our assumptions on $f,\eta$ grant that $v$ has bounded divergence and bounded covariant derivative. It follows from \cite{BS18a} that the Regular Lagrangian Flow $\Fl:\R\times\X\to\X$ of $v$ is Lipschitz in space and time. By definition of Regular Lagrangian Flow and of $v$ the equation $\partial_t(f(\Fl_t(x)))=\eta(\Fl_t(x))$ holds for $\mm$-a.e.\ $x$ and a.e.\ $t$, but thanks to the continuity of $\eta,\Fl,f$ it is easy to see that in fact for any $x\in\X$ the map $t\mapsto f(\Fl_t(x))$ is $C^1$ with derivative equal to $\eta(\Fl_t(x))$.

Now consider  the map $H:\X\times\R\times\R\to\X$ defined as $H(x,t,s):=\Fl_{t+s-f(x)}(x)$. Then this is clearly Lipschitz. Also, let $B\subset\X$ be bounded and $I\subset\R$ be a bounded interval. We claim that for some $C>0$ we have
\begin{equation}
\label{eq:Hcomp}
\text{$H_*(\mm\restr B\times\delta_{t} \times\mathcal L^1\restr I)\leq C\mm\qquad\forall t\in I.$}
\end{equation}
Indeed, since $f$ is Lipschitz there is $T>0$ such that $|t+s-f(x)|\leq T$ holds for any $(x,t,s)\in B\times I\times I$. It follows that for any $\varphi\in C_b(\X)$ non-negative it holds
\[
\begin{split}
\int\varphi\,\d H_*(\mm\restr B\times\delta_t \times\mathcal L^1\restr I)&=\int_B\int_I\varphi(\Fl_{t+s-f(x)})\,\d s\,\d\mm(x)\\
&\leq \int_B\int_{-T}^T\varphi(\Fl_r)\,\d r\,\d\mm(x)\leq 2T{\sf Comp}(\Fl)\int\varphi\,\d\mm,
\end{split}
\]
thus proving our claim \eqref{eq:Hcomp}. Notice also that since $|v|\in L^\infty$, the estimate \eqref{eq:speedrlf} trivially ensures that
\begin{equation}
\label{eq:claimH}
\text{there is $\bar t>0$ so that for any $x\in B_{2}(\bar x)$ we have $\Fl_t(x)\in B_{3}(\bar x)$ for any $t\in[-\bar t,\bar t]$.}
\end{equation}
Put  $\bar r:=\min\{\tfrac12,\tfrac{\bar t}8(\Lip(f\restr{B_3(\bar x)}))^{-1}\}$. We  claim that
\begin{equation}
\label{eq:claim1}
\text{there is $\varphi:\R\to\R$ such that $g=\varphi\circ f$ on $B_{\bar r}(\bar x)$}
\end{equation}
and argue by contradiction. If not, there are $x_0,x_1\in B_{\bar r}(\bar x)$ with $f(x_0)=f(x_1)$ and $g(x_0)<g(x_1)$.  By continuity we can find neighbourhoods $U_0,U_1\subset B_{\bar r}(\bar x)$ of $x_0,x_1$ respectively such that $\inf_{U_1}g>\sup_{U_0}g$. Then we can find other neighbourhoods $V_i\subset U_i$ of $x_i$, $i=0,1$, and $\bar s\in(0,\tfrac{\bar t}2)$ such that  $\Fl_{s}(x)\in U_i$ for every $x\in V_i$, $s\in[-\bar s,\bar s]$, $i=0,1$.

Let $\mu_i:=\mm(V_i)^{-1}\mm\restr{V_i}$, $i=0,1$ and let $\ppi$ be the only (by \cite{GigliRajalaSturm13}) optimal geodesic plan joining $\mu_0$ to $\mu_1$. Then \cite{Rajala12-2}  ensures that $\ppi$ is a test plan and clearly it is concentrated on geodesics taking values in $B_{2\bar r}(\bar x)\subset B_1(\bar x)$. Also, let $G:C([0,1],\X)\times\R\to C([0,1],\X)$ be given by $G(\gamma,s)_t:=H(\gamma_t,f(x_0),s)$ and put $\hat\ppi:=G_*(\ppi\times(\tfrac1{2\bar s}\mathcal L^1\restr{[-\bar s,\bar s]}))$. From the fact that $H$ is Lipschitz and \eqref{eq:Hcomp} it directly follows that $\hat\ppi$ is a test plan as well and the construction also ensures that $(\e_i)_*\hat\ppi$ is concentrated on $V_i$, $i=0,1$.

We make now the intermediate claim: 
\begin{equation}
\label{eq:hatppiconst}
\text{$\hat\ppi$ is concentrated on curves along which $f$ is constant. }
\end{equation}
To see this, notice that $\ppi$ is concentrated on geodesics having endpoints in $B_{\bar r}(\bar x)$ and thus that it is sufficient to prove that for any such geodesic $\gamma$, any $t\in[0,1]$ and $s\in[-\bar s,\bar s]$ we have
\[
f(\Fl_{f(x_0)+s-f(\gamma_0)}(\gamma_t))=f(x_0)+s.
\]
Since $r\mapsto f(\Fl_r(\gamma_t))$ is $C^1$ with derivative $\eta(\Fl_r(\gamma_t))$, the claim follows if we show that $\Fl_r(\gamma_t)\in B_3(\bar x)\subset \{\eta=1\}$ for any $|r|\leq |f(x_0)+s-f(\gamma_0)|$. To see this, notice that $\gamma$  takes values in $B_{2\bar r}(\bar x)\subset B_2(\bar x)$ and thus $|f(\gamma_0)+s-f(\gamma_t)|\leq \bar s+\Lip (f\restr{B_3(\bar x)})\sfd(\gamma_0,\gamma_t)\leq \bar t$  for  any $t\in[0,1]$ and  $s\in[-\bar s,\bar s]$. The claim then follows from \eqref{eq:claimH}.

From  \eqref{eq:hatppiconst} and  the definition of speed of a test plan (see \cite[Section 2.3.5]{Gigli14}) we have that
\[
0=f\circ \e_s-f\circ\e_t=\int_t^s[\e_r^*\d f](\hat\ppi'_r)\,\d r\qquad\hat\ppi-a.e.\ \forall t,s\in[0,1],\ t<s.
\]
By Fubini's theorem this implies that for a.e.\ $t\in[0,1]$ the identity $[\e_t^*\d f](\hat\ppi'_t)=0$ holds $\hat\ppi$-a.e.. Now we can use our assumption $\d g=h\d f$ (noticing that $|h|\in L^\infty_{loc}(\X)$ as a consequence of $|\d g|,\tfrac1{|\d f|}\in L^\infty_{loc}(\X)$) to deduce that
\[
g\circ \e_1-g\circ \e_0=\int_0^1[\e_r^*\d g](\hat\ppi'_r)\,\d r=\int_0^1 h\circ\e_r[\e_r^*\d f](\hat\ppi'_r)\,\d r=0\qquad\hat\ppi-a.e..
\]
This latter identity, however, is in contradiction with the fact that
\[
\int g\circ \e_0\,\d\hat\ppi=\int g\,\d(\e_0)_*\hat\ppi\leq \sup_{V_0}g<\inf_{V_1}g\leq \int g\,\d(\e_1)_*\hat\ppi=\int g\circ \e_1\,\d\hat\ppi,
\]
thus proving the claim \eqref{eq:claim1}.

Property  \eqref{eq:claim1} defines the real valued function $\varphi$ on the connected - being the continuous image of a connected - set $I:=f(B_{\bar r}(\bar x))\subset\R$. To conclude the proof it is therefore enough to show that $\varphi:I\to\R$ is locally Lipschitz, with a control on the local Lipschitz constant independent on the chosen neighbourhood.

Thus let $x\in B_{\bar r}(\bar x)$, put $\alpha:=f(x)$ and notice that for $|t|\ll 1$ we have $\Fl_t(x)\in B_{\bar r}(\bar x)$ as well with - by the above discussion - $f(\Fl_t(x))=\alpha+t$. Conclude noticing that
\[
\begin{split}
|\varphi(\alpha+t)-\varphi(\alpha)|&=|\varphi(f(\Fl_t(x)))-\varphi(f(x))|=|g(\Fl_t(x))-g(x)|\leq\Lip(g\restr{B_{\bar r}(\bar x)})\|v\|_{L^\infty}|t|,
\end{split}
\]
where in the last inequality we used the fact that the speed of the curve $s\mapsto \Fl_s(x)$ is uniformly bounded by $\|v\|_{L^\infty}$.
\end{proof}

In general this last result cannot be globabilized.  In our case, however, this is possible:
\begin{prop}\label{prop:odephi}
With the same notation and assumptions as in Assumption \ref{baseas}  the following holds.

There exists a  function $\varphi\in C^{\infty}_{loc}(I)$ such that $|\nabla u|=\varphi\circ u$, where $I:=u(\X)$, and it satisfies
\begin{equation}
\label{eq:odephi}
\tfrac{1}{1-N}\varphi\varphi''=1-\tfrac1{(1-N)^2}(\varphi')^2.
\end{equation}
\end{prop}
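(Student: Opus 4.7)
The plan is threefold: (i) obtain the local representation $|\d u|=\varphi_{loc}\circ u$ via Proposition \ref{prop:comploc}; (ii) derive the stated ODE for $\varphi_{loc}$ by combining item (4) of Proposition \ref{hessianbusemann} with the chain rule for the Laplacian, concluding $C^\infty$ regularity from standard ODE theory; and (iii) glue the local functions into a global $\varphi$ via a chain argument exploiting path-connectedness of $\X$ and uniqueness of solutions to the second-order ODE.

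\medskip

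For (i), contracting the Hessian identity $\Hess u=\alpha(\Id-N\e_1\otimes\e_1)$ from item (3) of Proposition \ref{hessianbusemann} with $\nabla u/|\d u|$ yields
\[
\d|\d u| \;=\; \tfrac{\alpha(1-N)}{|\d u|}\,\d u,
\]
so Proposition \ref{prop:comploc} applies to $(f,g):=(u,|\d u|)$ with $h:=\alpha(1-N)/|\d u|$. The remaining hypotheses come from Lemma \ref{le:baseb} ($u\in\Test_{loc}\cap\Lip$, $\Delta u=0$) and items (2)--(3) of Proposition \ref{hessianbusemann} ($|\d u|>0$ locally Lipschitz, whence $|\alpha|=|\d|\d u||/(N-1)\in L^\infty_{loc}$ and $|\Hess u|_{\sf HS}\in L^\infty_{loc}$). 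One thus gets, on an open ball $B$ about any prescribed point, a locally Lipschitz $\varphi_{loc}:u(B)\to(0,\infty)$ with $|\d u|=\varphi_{loc}\circ u$; here $u(B)$ is an open interval since $B$ is connected and $u$ is open (item (5) of Proposition \ref{hessianbusemann}).

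\medskip

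For (ii), with $\beta:=(N-2)/(N-1)$ and using the chain rule for the Laplacian (justified by a smoothness bootstrap, since a priori $\varphi_{loc}$ is only Lipschitz), the identity $\Delta u=0$ combined with item (4) of Proposition \ref{hessianbusemann} gives
\[
-(N-2)\,\varphi_{loc}^\beta\circ u \;=\; \Delta(\varphi_{loc}^\beta\circ u) \;=\; (\varphi_{loc}^\beta)''\circ u \cdot |\d u|^2.
\]
Expanding $(\varphi_{loc}^\beta)''=\beta(\beta-1)\varphi_{loc}^{\beta-2}(\varphi_{loc}')^2+\beta\varphi_{loc}^{\beta-1}\varphi_{loc}''$, dividing through by $\beta\varphi_{loc}^\beta$, and using $\beta-1=-1/(N-1)$ and $(N-2)/\beta=N-1$ reduces the relation to
\[
\varphi_{loc}\,\varphi_{loc}'' \;=\; (1-N) + \tfrac{(\varphi_{loc}')^2}{N-1},
\]
which is exactly the stated ODE after dividing by $(1-N)$. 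Since $\varphi_{loc}>0$, the right-hand side is a smooth function of $(\varphi,\varphi')$, so $\varphi_{loc}\in C^\infty_{loc}$ by standard ODE theory. For (iii), take $x_0,x_1\in\X$ with $u(x_0)=u(x_1)=c_0$, join them by a continuous path (possible since $\X$ is geodesic), and cover it by a finite chain of balls $B_0,\ldots,B_k$ with $B_i\cap B_{i+1}\neq\emptyset$ and associated local smooth functions $\varphi^{(i)}$ on the open intervals $u(B_i)$. Since $\varphi^{(i)}\circ u=|\d u|=\varphi^{(i+1)}\circ u$ on the non-empty open set $B_i\cap B_{i+1}$, the two smooth solutions of the same second-order ODE agree on the non-empty open set $u(B_i\cap B_{i+1})$, hence on the entire connected intersection $u(B_i)\cap u(B_{i+1})$ by ODE uniqueness. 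Chaining yields a consistent function $\Phi$ on the interval $\bigcup_i u(B_i)$, and evaluating at $c_0\in u(B_0)\cap u(B_k)$ gives $|\d u|(x_0)=\Phi(c_0)=|\d u|(x_1)$. This defines $\varphi:I\to(0,\infty)$ globally, smooth and satisfying the ODE because locally it equals some $\varphi_{loc}$.

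\medskip

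The main obstacle is step (iii): matching of values alone on the smaller set $u(B_i\cap B_{i+1})$ must propagate to equality on the full intersection $u(B_i)\cap u(B_{i+1})$, which is typically strictly larger, and this rigidity is precisely what ODE uniqueness (hence step (ii)) provides. A secondary technical point is the bootstrap needed in (ii) to legitimize the classical chain rule for the Laplacian applied to $\varphi_{loc}^\beta\circ u$ when $\varphi_{loc}$ is a priori only Lipschitz.
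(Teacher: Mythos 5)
Your overall plan matches the structure of the paper's proof: (i) use Proposition \ref{prop:comploc} to get local Lipschitz representations $|\d u|=\varphi_{loc}\circ u$; (ii) turn the PDE identity from item (4) of Proposition \ref{hessianbusemann} into the ODE \eqref{eq:odephi} via the chain rule for the Laplacian and bootstrap to smoothness; (iii) glue the local functions globally using ODE uniqueness and connectedness of $\X$. Steps (i) and (iii) are fine (the paper does (iii) with an open-and-closed argument on $\X$ rather than a chain of balls, but the two are interchangeable here), and your algebra in the ODE derivation is correct.

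The problem is step (ii), and more specifically the point you downgrade to a ``secondary technical point'' in your closing remark: the chain rule for the Laplacian applied to $\varphi_{loc}^{\beta}\circ u$ simply cannot be invoked when $\varphi_{loc}$ is only known to be Lipschitz. That formula requires $\varphi_{loc}^{\beta}\in C^{1,1}_{loc}$ (or at least second-order Sobolev regularity), and merely Lipschitz gives only an a.e.\ first derivative with no well-defined second derivative. Your ``smoothness bootstrap'' is circular: you use the chain rule to derive the ODE, and the ODE to obtain the smoothness that would justify the chain rule, with no valid entry point. Proposition \ref{prop:comploc} only yields local Lipschitz regularity of $\varphi_{loc}$, so something additional is needed. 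This is precisely the hard part of the paper's proof of Proposition \ref{prop:odephi}: a substantial separate argument shows that $\varphi_x\in C^{1,1}_{loc}$ before the chain rule is ever applied. The paper does this by disintegrating $\mm$ along $u$, proving (via the Regular Lagrangian Flow of $\zeta\nabla u$) that pushforward measures like $u_*(\eta\mm)$ and $u_*(|\d u|^2\eta\mm)$ are absolutely continuous with absolutely continuous densities, and then transferring the distributional Laplacian identity $\bd(|\d u|^{\beta})=-(N-2)|\d u|^{\beta}\mm$ into a one-dimensional statement about $\varphi_x'$ that forces absolute continuity of $\varphi_x'$. Your proposal contains no analogue of this argument, which is the actual content of the proof rather than a side remark.
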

\begin{proof} For $x\in\X$ let $U_x,\varphi_x$ the neighbourhood and the function given by Proposition \ref{prop:comploc} above. By item $(2)$ in Proposition \ref{hessianbusemann} we know that $\varphi_x$ is strictly positive. Now suppose for a moment that we already know that $\varphi_x$ is $C^{1,1}_{loc}$ for every $x\in\X$.

We know that $|\d u|=\varphi_x\circ u$ holds on $U_x$, thus the regularity of $\varphi_x$ and its positivity justify the chain rules
\[
\begin{split}
\Delta(|\d u|^{\frac{N-2}{N-1}})&=\div(\tfrac{N-2}{N-1}(\varphi_x^{-\frac1{N-1}}\varphi_x')\circ u\nabla u)=\Big(-\tfrac{N-2}{(N-1)^2}\,\varphi_x^{\frac {N-2}{N-1}}(\varphi_x')^2 +\tfrac{N-2}{N-1}\,\varphi_x^{1+\frac{N-2}{N-1}}\varphi_x''\Big)\circ u
\end{split}
\]
having used also the fact that $u$ is harmonic.

Since \eqref{eq:rigdelta} can now be written as $\Delta(|\d u|^{\frac{N-2}{N-1}})=-(N-2)\varphi_x^{\frac{N-2}{N-1}}\circ u$ we conclude that $\varphi_x$ satisfies \eqref{eq:odephi}. It then follows by standard bootstrapping that $\varphi_x$ is $C^\infty_{loc}$, as in the statement.

The ODE \eqref{eq:odephi} also gives, by ODE uniqueness, the desired rigidity, as it is clear that any two solutions $\varphi_x,\varphi_{x'}$ defined on some intervals $I_x,I_{x'}\subset(0,1)$ that coincide in some non-trivial interval $I\subset I_x\cap I_{x'}$ must coincide on the whole $ I_x\cap I_{x'}$ and that the natural `glued' function defined on $ I_x\cup I_{x'}$ is still a solution. Thus fix $\bar x\in\X$, let $\varphi$ be the maximal solution of \eqref{eq:odephi} extending $\varphi_{\bar x}$ and let $A\subset \X$ the set of those $x$'s such that  $\varphi_x$ is the restriction of $\varphi$ to some subinterval of its domain of definition. Since $U_x$ is a neighbourhood of $x$ and $\varphi_x$ is defined on the open set $u(U_x)$ (by item $(5)$ in Proposition \ref{hessianbusemann}), it is clear that  $A$ is both open and closed. Since it is not empty and $\X$ is connected (being geodesic) we conclude that $A=\X$.

We now prove that  $\varphi_x$ is $C^{1,1}_{loc}$ and to this aim we start claiming that for $\eta\in W^{1,2}(\X)$ with support compact and contained in $U_x$ the measure  $\mu:=u_*(\eta\mm)$ is absolutely continuous w.r.t.\ $\mathcal L^1$ with density that is also absolutely continuous. Indeed, let $\zeta$ be a Lipschitz cut-off function with compact support and identically 1 on a neighbourhood of $\supp(\eta)$ and let $(\Fl_t)$ be the Regular Lagrangian Flow of $\zeta\nabla u$ (whose existence and uniqueness follow from $|\zeta\nabla u|\in L^\infty$, $\div(\zeta\nabla u)=\la\nabla\zeta,\nabla u\ra\in L^\infty$ and $\zeta\nabla u\in W^{1,2}_C(T\X)$ by \cite[Proposition 3.3.22]{Gigli14}). Then by \cite[Proposition 2.7]{GR17} we see that $\frac{u\circ\Fl_t-u}{t}\to |\d u|^2$ in $L^2(\supp(\eta),\mm)$ as $t\to0$. Notice also that item $(2)$ in Proposition \ref{hessianbusemann} yields  that $\tfrac1{|\d u|^2}\in W^{1,2}_{loc}\cap L^\infty_{loc}(\X)$ (for Sobolev regularity recall that $u\in H^{2,2}_{loc}(\X)$ and \cite[Proposition 3.3.22]{Gigli14}), thus the following computation is justified for any   $\xi\in C^\infty_c(\R)$:
\[
\begin{split}
-\int \xi'\,\d\mu&=-\int \xi'\circ u\,\eta\,\d\mm=
-\lim_{t\to 0}\int \frac{\xi\circ u\circ\Fl_t-\xi\circ u}t\,\frac{\eta}{|\d u|^2}\,\d\mm\\
&=\int \xi\circ u\, \langle \d\Big(\frac{\eta}{|\d u|^2}\Big),\d u\rangle\,\d\mm=\int \xi\,\d\nu,\qquad\text{where}\quad \nu:=u_*\Big( \langle \d\Big(\frac{\eta}{|\d u|^2}\Big),\d u\rangle\,\mm\Big).
\end{split}
\]
This proves that the distributional derivative of  $\mu$ is a Radon measure, and thus that   $\mu\ll\mathcal L^1$ with BV density. To prove that the density is absolutely continuous it suffices to prove that   $\nu\ll\mathcal L^1$. But this is obvious, as a direct consequence of what just proved is that   $u_*\mm\ll\mathcal L^1$.

We are now ready to show that  $\varphi=\varphi_x$ is $C^{1,1}_{loc}(u(U_x))$. We know from Proposition \ref{prop:comploc} that it is locally Lipschitz and that in $U_x$ we have $|\d u|=\varphi\circ u$. Since we already recalled that $|\d u|\in W^{1,2}_{loc}$ we see that  $\d|\d u|=\varphi'\circ u\,\d u$. Also, we know from \eqref{eq:rigdelta} and the chain rule for the Laplacian that in $U_x$ we have  $\Delta(|\d u|)=\psi\circ u$ for some locally bounded function $\psi$, that we can rewrite as $\divv(\varphi'\circ u\,\d u)=\psi\circ u$. Now we take $\eta\in \Test(\X)$  with support in  $U_x$ and $\xi\in C^\infty(\R)$ and observe that
\[
\begin{split}
\int\langle \d(\xi\circ u\,\eta),\varphi'\circ u\,\d u\rangle\,\d\mm =-\int(\psi \,\xi)\circ u\,\eta\,\d\mm=-\int \psi \,\xi \rho\,\d\mathcal L^1,
\end{split}
\]
where   $\rho$ is the density  of  $u_*(\eta\mm)$. On the other hand we also have
\[
\begin{split}
\int\langle \d(\xi\circ u\,\eta),\varphi'\circ u\,\d u\rangle\,\d\mm &=\int(\xi'\varphi')\circ u\,|\d u|^2\eta+(\varphi'\xi)\circ u\langle \d\eta,\d u\rangle\,\d\mm\\
&=\int \xi'\varphi'\rho_1+\xi\varphi'\rho_2\,\d\mathcal L^1,
\end{split}
\]
where  $\rho_1,\rho_2$ are the densities of  $u_*(|\d u|^2\eta\mm)$ and $u_*(\langle \d\eta,\d u\rangle\,\mm)$, respectively. By the arbitrariness of $\xi$ we proved that the distributional derivative of $\varphi'\rho_1$ is equal to  $\varphi'\rho_2+\psi \rho\in L^1_{loc}$, and thus that   $\varphi'\rho_1$ is (more precisely: has a representative that is) absolutely continuous. Since $|\d u|^2\eta$ is in $W^{1,2}(\X)$ and has support in $U_x$, what previously proved shows that $\rho_1$ is also absolutely continuous.

We thus deduce that $\varphi'$ is locally absolutely continuous on $\{\rho_1>0\}=u(\{\eta>0\})$. Taking $\eta=\eta_n$ for $(\eta_n)$ such that of $\cup_n\{\eta_n>0\}=U_x$ we conclude.
\end{proof}

\subsection{The splitting in our setting}
We are now ready to prove Theorem \ref{mainthm}, whose statement for convenience we repeat:
\begin{thm}
Let \((\X,\di,\m)\) be an \(\RCD(-(N-1),N)\) space with \(N\geq 3\) and assume that the first eigenvalue of the Laplacian \(\lambda_1\) is   \(\geq N-2\). Then one of the following holds:
\begin{enumerate}
	\item \(\X\) has exactly one end with infinite volume;
	\item \(\X\) is isomorphic to a warped product space \(\R\times_{w} \X'\) with warping functions $w_\sfd:=\cosh$, $w_\mm:=\cosh^{N-1}$ 
	where \(\X'\) is a compact \(\RCD(-(N-1),N)\) space. Moreover, in this case \(\lambda_1=N-2\).
\end{enumerate}
\end{thm}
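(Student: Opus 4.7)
The plan is to handle case (i) directly and reduce case (ii) to the general splitting Theorem \ref{thm:genspl}. Since $\lambda_1 \geq N-2 > 0$, Proposition \ref{prop:1end} ensures at least one end of infinite volume; if there is exactly one, case (i) holds. Otherwise $\X$ has at least two ends of infinite volume, and Corollary \ref{thm2ends} produces a bounded non-constant harmonic function $u$, placing us in the framework of Assumption \ref{baseas}.

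Proposition \ref{hessianbusemann} then gives at once that $\lambda_1 = N-2$ (the `moreover' clause of (ii)), that $|\d u|$ is strictly positive and locally Lipschitz, and that $\Hess u = \alpha(\Id - N \e_1\otimes\e_1)$, while Proposition \ref{prop:odephi} further produces a smooth $\varphi : u(\X) \to (0,\infty)$ with $|\d u| = \varphi \circ u$ satisfying $(N-1)\varphi\varphi'' = (\varphi')^2 - (N-1)^2$. This is exactly the setting of Lemma \ref{le:coord}, with $\zeta = \varphi'/(1-N)$, $\tilde\zeta = N\varphi'/(N-1)$, $\xi = 0$. Choosing $\eta$ to be a primitive of $1/\varphi$ normalized so that the maximum point of $\varphi$ is sent to $0$, the Lemma produces $\b := \eta \circ u$ with $|\d\b|=1$, $\Hess \b = \psi_\sfd \circ \b\,(\Id-\e_1\otimes\e_1)$, $\Delta \b = \psi_\mm \circ \b$, and the relation $\psi_\mm = (N-1)\psi_\sfd$.

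Thus hypotheses (a,b) of Section \ref{se:setup} are satisfied; the remaining fact that $\b$ sweeps out all of $\R$ comes from integrating the ODE for $\varphi$ (which yields $(\varphi')^2 = (N-1)^2 - C\,\varphi^{2/(N-1)}$ for some $C>0$), so $\varphi$ vanishes linearly at the endpoints of $u(\X)$ and $1/\varphi$ is non-integrable there. Theorem \ref{thm:genspl} then provides the desired measure-preserving isometry $\sfT: \X \to \R\times_w\X'$ with $w_\sfd, w_\mm$ as in \eqref{eq:defwm} and \eqref{eq:normw}. A direct computation, substituting $\varphi(\eta^{-1}(t)) \propto w_\sfd(t)^{-(N-1)}$ (forced by $w_\sfd'/w_\sfd = \psi_\sfd$ together with $\psi_\sfd = \varphi'\circ\eta^{-1}/(1-N)$) back into the ODE for $\varphi$, reduces matters to $w_\sfd' = \sqrt{w_\sfd^2-1}$ with $w_\sfd(0)=1$, giving $w_\sfd = \cosh$ and hence $w_\mm = \cosh^{N-1}$. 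Proposition \ref{prop:rcdxprimo} applied at $\bar z = 0$ (which is legitimate since $\psi_\sfd = \tanh$) then shows that $\X'$ is $\RCD(-(N-1), N)$, while compactness of $\X'$ comes from the assumption that $\X$ has at least two ends of infinite volume: the level set $\X' = \b^{-1}(0)$ must separate those ends, and a non-compact $\X'$ would allow paths through it connecting the two $\R$-ends outside any prescribed compact set, collapsing them to a single end.

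Most of the proof is bookkeeping once the rigidity in Sections \ref{chbusemann} and \ref{chcompfun} and the general splitting Theorem \ref{thm:genspl} are in hand. I expect the most delicate items to be the explicit ODE manipulations that identify the warping as $\cosh$ and $\cosh^{N-1}$, and the ends-counting argument for compactness of $\X'$, which rests on the hypothesis of two infinite-volume ends rather than on a purely $\RCD$-based obstruction.
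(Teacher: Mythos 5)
Your proposal follows essentially the same route as the paper: one end $\Rightarrow$ case (i) via Proposition \ref{prop:1end}; otherwise Corollary \ref{thm2ends} and Propositions \ref{hessianbusemann}, \ref{prop:odephi} produce the harmonic $u$ with $|\d u|=\varphi\circ u$ and the Hessian rigidity; Lemma \ref{le:coord} converts $u$ into the coordinate $\b$; a short ODE computation identifies $\psi_\sfd$ and hence $w_\sfd=\cosh$, $w_\mm=\cosh^{N-1}$; Theorem \ref{thm:genspl} splits; Proposition \ref{prop:rcdxprimo} handles $\X'$; and the two-ends hypothesis forces $\X'$ compact. Your identification $\zeta=\varphi'/(1-N)$, $\tilde\zeta=N\varphi'/(N-1)$, $\xi=0$ and the ODE reduction to $w_\sfd'=\sqrt{w_\sfd^2-1}$ are correct, though the paper reaches $\cosh$ more directly by deriving $\psi_\sfd'=1-\psi_\sfd^2$.

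One remark on the step you flagged as ``the remaining fact that $\b$ sweeps out all of $\R$.'' This is not something you need to argue separately: once $\psi_\sfd$ is identified with $\tanh$ and $\psi_\mm$ with $(N-1)\tanh$ on the range of $\b$, these extend (tautologically) to locally Lipschitz functions on all of $\R$, so the hypotheses $(a,b)$ of Section \ref{se:setup} are met; the ``direct consequences'' listed there then give the global Regular Lagrangian Flow of $\nabla\b$ and the identity $\b(\Fl_t(x))=\b(x)+t$ for all $t\in\R$, which already forces surjectivity. Your attempted direct argument is also not airtight as stated: integrating the $\varphi$-ODE gives $(\varphi')^2=(N-1)^2-C\varphi^{2/(N-1)}$ with $C$ of undetermined sign, and the claim that $\varphi$ vanishes at the endpoints of $u(\X)$ does not follow from the ODE alone (one would have to argue it separately, e.g.\ along the gradient flow of $\nabla u$, since otherwise $u$ would overshoot its supremum in finite time). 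Finally, your compactness argument captures the right mechanism --- non-compactness of $\X'$ lets one route around any compact set --- but the paper makes this precise by showing that $\X\setminus K'$ is path-connected for a suitable compact ``rectangle'' $K'=\Pr^{-1}(\Pr(K))\cap\b^{-1}(\b(K))$ containing an arbitrary compact $K$, using a flow line through some $z\in\X'\setminus\Pr(K')$; as stated your sentence presupposes that $\b^{-1}(0)$ separates the two ends, which is not needed in the contradiction argument.
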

\begin{proof} Since  $\lambda_1\geq N-2>0$, we know from Proposition \ref{prop:1end} that $\X$ has at least one end of infinite measure. Assume it has at least two of these. Then Corollary \ref{thm2ends} gives the existence of a non-constant bounded harmonic function $f$ on the whole $\X$ and then Proposition \ref{prop:odephi} that
\begin{equation}
\label{eq:dff}
|\d f|=\varphi\circ f
\end{equation}
for a positive smooth function $\varphi$ on $f(\X)$  that satisfies
\begin{equation}
\label{eq:odeagain}
\tfrac{1}{1-N}\varphi\varphi''=1-\tfrac1{(1-N)^2}(\varphi')^2.
\end{equation}
Then Proposition \ref{hessianbusemann} tells that $|\d f|\neq 0$ a.e., and that putting $\e_1:=\tfrac{\nabla f}{|\nabla f|}$ we have
\begin{equation}
\label{eq:zeta}
\Hess f=\zeta\circ f|\d f|\Id-N\zeta\circ f|\d f|\e_1\otimes\e_1 \qquad\text{for}\qquad \zeta=\tfrac{\varphi'}{1-N}.
\end{equation}
We can therefore apply Lemma \ref{le:coord}: let $\eta$ be so that  $\eta'=\tfrac1\varphi$ ($\eta$ is defined up to an additive constant: the value of such constant will be chosen in a moment) and define the function $\b:=\eta\circ f$. Then $\b\in H^{2,2}_{loc}(\X)$ with
\begin{align}
|\d \b|&\equiv1,&&\\
\Hess \b&=\psi_\sfd\circ\b\big(\Id-\e_1\otimes\e_1\big),&&\text{for }\quad \psi_\sfd:=\zeta\circ\eta^{-1}\\
\Delta \b&=\psi_\mm\circ\b,&&\text{for }\quad\psi_\mm:=(N-1)\zeta\circ\eta^{-1}=(N-1)\psi_\sfd.
\end{align}
To find explicitly $\psi_\sfd,\psi_\mm$ notice that
\begin{equation}
\label{eq:conto}
\zeta'\varphi\stackrel{\eqref{eq:zeta}}=\tfrac{\varphi''\varphi}{1-N}\stackrel{\eqref{eq:odeagain}}=1-\tfrac{(\varphi')^1}{(N-1)^2}\stackrel{\eqref{eq:zeta}}=1-\zeta^2
\end{equation}
and thus
\[
\psi_\sfd'=(\zeta\circ\eta^{-1})'=(\zeta'\tfrac1{\eta'})\circ\eta^{-1}=(\zeta'\varphi)\circ\eta^{-1}\stackrel{\eqref{eq:conto}}=1-\zeta^2\circ\eta^{-1}=1-\psi_\sfd^2
\]
proving that $\psi_\sfd(z)=\tanh(z+c)$ for some $c\in\R$. Since $\psi_\sfd\circ\eta$ is equal to the given function $\zeta$, we see that replacing $\eta$ with $\tilde\eta:=\eta+\alpha$ for $\alpha\in\R$ means replacing $\psi_\sfd$ with $\tilde\psi_\sfd=\psi_\sfd(\cdot-\alpha)$. Thus we can, and will, choose $\eta$ so that $\psi_\sfd=\tanh$. 

It follows by the defining properties \eqref{eq:defwd}, \eqref{eq:normw} that $w_\sfd=\cosh$. Similarly, we have  $\psi_\mm =(N-1)\tanh $ and thus $w_\mm=\cosh^{N-1}$.

Then Theorem \ref{thm:split} gives the required warped product structure, where the fiber is the space $\X':=\b^{-1}(0)$ equipped with the distance $\sfd'$ and the measure $\mm'$ defined in \eqref{eq:defdp} and \eqref{eq:defmp} respectively. Moreover, we can apply Proposition \ref{prop:rcdxprimo} with $\bar z=0$ to deduce that $(\X',\sfd',\mm')$ is $\RCD(-(N-1),N)$.

It remains to prove that $\X'$ is compact. Say not. We are going to prove that in this case $\X$ has at most one end, thus contradicting the assumption made at the beginning of the proof.

Let $K\subset\X$ be compact and let $K':=\Pr^{-1}(\Pr(K))\cap \b^{-1}(\b(K))$. Since $\sfT(K')=\b(K)\times \Pr(K)\subset \R\times_w\X'$ we see that  the `rectangle' $K'$ is also compact. Let $x_0,x_1\in\X\setminus K'$. Then for $i=0,1$ either $\Pr(x_i)\notin \Pr(K')$ or $\b(x_i)\notin \b(K')$ (or both). Say $\b(x_i)\notin \b(K')$ for $i=0,1$ and use the assumption that $\X'$ is not compact to find $z\in \X'\setminus \Pr(K')$. Then the curve $t\mapsto\Fl_t(z)$ (here $\Fl$ is the flow of $\nabla\b$) does not meet $K'$ and, moreover, there are $t_0,t_1\in\R$ with $\b(\Fl_{t_i}(z))=\b(x_i)$, $i=0,1$. Recalling that the level sets of $\b$ are path connected, we can find curves joining $x_i$ and $\Fl_{t_i}(z)$ lying entirely on level sets. The assumption $\b(x_i)\notin \b(K')$  ensures that these curves do not meet $K'$, thus the path obtained by gluing these curves and $t\mapsto\Fl_t(z)$  produces a curve from $x_0$ to $x_1$ that does not meet $K'$, showing that $x_0,x_1$ belong to the same connected component of $\X\setminus K'$. Since an analogous construction can be made if one, or both, of the  $x_i$'s are with $\Pr(x_i)\notin \Pr(K')$, we see that $\X\setminus K'$ has only one connected component, providing the desired contradiction.
\end{proof}

\def\cprime{$'$} \def\cprime{$'$}

\end{document}